\newcommand{\Keywords}[1]{\par
{\small{\em Keywords\/}: #1}}
\newtheorem{thm}{Theorem}[section]
\newtheorem{cor}[thm]{Corollary}
\newtheorem{prop}[thm]{Proposition}
\newtheorem{lem}[thm]{Lemma}
\newcommand{\ben}{\vspace{0mm}\begin{equation}}
\newcommand{\een}{\vspace{0mm}\end{equation}}
\newcommand{\be}{\vspace{0mm}\begin{equation*}}
\newcommand{\ee}{\vspace{0mm}\end{equation*}}
\newcommand{\ba}{\vspace{0mm}\begin{equation*}\begin{aligned}}
\newcommand{\ea}{\vspace{0mm}\end{aligned}\end{equation*}}
\newcommand{\ban}{\vspace{0mm}\begin{equation}\begin{aligned}}
\newcommand{\ean}{\vspace{0mm}\end{aligned}\end{equation}}
\title{\begin{center}\begin{LARGE}
Stochastic modeling of density-dependent diploid populations and extinction vortex
\end{LARGE}
\end{center}}
\author[1]{Camille Coron \thanks{coron@cmap.polytechnique.fr, corresponding author}}
\affil[1]{CMAP, École Polytechnique, CNRS UMR 7641, Route de Saclay, 91128
 Palaiseau Cedex, France}
\date{}
\begin{document}
%  \fontfamily{ptm} \selectfont
\maketitle

\begin{abstract}
We model and study the genetic evolution and conservation of a
population of diploid hermaphroditic organisms, evolving
continuously in time and subject to resource competition. In the
absence of mutations, the population follows a $3$-type nonlinear
birth-and-death process, in which birth rates are designed to
integrate Mendelian reproduction. We are interested in the long term
genetic behaviour of the population (adaptive dynamics), and in
particular we compute the fixation probability of a slightly non-neutral allele in the absence of mutations, which involves finding the unique sub-polynomial solution of a nonlinear $3$-dimensional
recurrence relationship. This equation is simplified to a $1$-order
relationship which is proved to admit exactly one bounded solution.
Adding rare mutations and rescaling time, we study the successive mutation fixations
in the population, which are given by the jumps of a limiting Markov process on the genotypes space.
At this time scale, we prove that the fixation rate of deleterious
mutations increases with the number of already fixed mutations,
which creates a vicious circle called the extinction vortex.
\end{abstract}

\Keywords{Population
genetics, diploid population, nonlinear
birth-and-death process, fixation probability, Dirichlet problem, multidimensional nonlinear recurrence equations, extinction vortex.}

\section{Introduction}\label{sectioncadre}
Our goal is to model a finite population with diploid reproduction
and competition. We specially want to understand the role of
diploidy and Mendelian reproduction on mutation fixation probabilities and
on the genetic evolution of a population. We are interested
in studying the progressive accumulation of small
deleterious mutations which generates an extinction vortex in small populations (see
\citet{GilpinSoule1986, Lynch1990} and \citet{Coronbio2012} for more biological context and analyses).

The population follows a birth-and-death
process in which each individual has a natural death rate that
depends on its genotype (Section \ref{sectionecologique}). Birth rates are designed to model the
Mendelian reproduction, and individuals are competing against each
other. First, in the absence of mutation, we focus on one gene and
compute the fixation probability of an allele $a$ competing against a resident allele $A$ (Sections \ref{sectionfixation} and \ref{sectionpreuve}) as done in
\citet{ChampagnatLambert2007} for the simpler haploid case. We first consider the neutral case, where individuals all have same birth, natural death and competition death rates (i.e. alleles $A$ and $a$ are exchangeable). Here a martingale argument proves that the fixation probability of allele $a$ is simply equal to the initial proportion of this allele in the population. We next consider the case where allele $a$ is slightly non-neutral, i.e. natural death rates slightly deviate from the neutral case. Here we prove that the fixation probability of allele $a$ is differentiable in the parameters of deviation from the neutral case and that its partial derivatives are the unique subpolynomial solutions of Dirichlet problems. These equations consist in $3$-dimensional nonlinear double recurrence relationships which we manage to simplify to a $1$-dimensional double
recurrence admitting a unique bounded solution. In Section \ref{sectionmutationnelle}, we add rare mutations and rescale time in order to observe mutation apparitions. At this time scale, mutations get fixed or disappear instantaneously, and the successive fixations of mutations are given by the jumps of a Markov process $S$ on the genotypes space, called the ``Trait Substitution Sequence'', introduced by \citet{Metzetal1996} and studied notably in \citet{Champagnat2006} and \citet{ ColletMeleardMetz2011} in the diploid case. Here the population size remains finite, and we do not use any deterministic approximation. We finally get interested in the successive jump rates of $S$ in the particular case of deleterious mutations (Section \ref{sectionvortex}). Indeed we prove that when every mutation is deleterious, the Markov process $S$ jumps more and more rapidly, i.e. the fixation rate of a deleterious mutation increases  with the number of already fixed mutations, if the population is small enough which creates a vicious circle called the extinction vortex (see \citet{Coronbio2012} for biological interpretations and numerical results).

\section{Presentation of the model}\label{sectionecologique}

We consider a population of diploid hermaphroditic self-incompatible
organisms, characterized by their genotypes. Building on works of
\citet{ChampagnatFerriereMeleard2006, Champagnat2006} and \citet{ ColletMeleardMetz2011}, we
consider a birth and death process with mutation,
selection and competition under different time scales and we add diploidy. Each individual is characterized by its genotype $x\in\mathbf{G}:=\{\{\mathcal{A},\mathcal{C},\mathcal{G},\mathcal{T}\}^{G}\}^2$
where $G$ is the genome size and $\mathcal{A}$, $\mathcal{C}$,
$\mathcal{G}$, and $\mathcal{T}$ are the four nucleotides that
compose DNA. Genotype $x=(x_1,x_2)$ is in fact composed with two DNA
strands $x_1$ and $x_2$ in $\{\mathcal{A},\mathcal{C},\mathcal{G},\mathcal{T}\}^{G}$. In
Sections \ref{sectionecologique} to \ref{sectionpreuve}, we consider the case without mutation and assume that the population is initially composed with
individuals that only differ from each other on one gene. For this
gene, there are two possible alleles, denoted by $A$ and $a$ in $\{\mathcal{A},\mathcal{C},\mathcal{G},\mathcal{T}\}^{G'}$ where $G'\leqslant G$.
The genotypes of individuals are thus denoted $AA$, $Aa$, and $aa$, and we represent the population dynamics by
the Markov process: \be Z:t\mapsto Z_t=(k_t,m_t,n_t),\ee

that gives the respective numbers of individuals with genotype $AA$,
$Aa$, and $aa$ at time $t$. For more simplicity, we will also refer
to these genotypes as types $1$, $2$, and $3$. We assume that the
process $Z$ is a birth-and-death process with competition on
$\mathbb{N}^3$, and we now detail the birth and death rates of
individuals of each genotype. The population has maximum fecundity
rate $r$. More precisely, if the population contains $N$
individuals, $rN$ is the rate at which two distinct individuals of
the population encounter, and the maximum total birth rate. These
two individuals are chosen uniformly randomly in the population, and
their encounter gives rise to a birth with a probability $p_{ij}$
($p_{ij}=p_{ji}$) that depends on their two genotypes $i$ and $j$.
$p_{ij}$ can be defined biologically as the selective value
associated with the couple of genotypes $i$ and $j$, and represents
both the degree of adaptation of types $i$ and $j$ and their
compatibility. Finally the new-born individual results from a
segregation (genetic melting between the genotypes of its parents),
satisfying Mendel's laws of heredity. Then in the population
$Z=(k,m,n)$ such that $k+m+n\geqslant2$, if we define
$b_{ij}:=rp_{ij}$, the rate $b_i(Z)$ at which an individual of type
$i\in\{1,2,3\}$ arises is: \ban \label{birthrates}
b_1(Z)&=b_{11}\frac{k(k-1)}{N-1}+b_{12}\frac{km}{N-1}+b_{22}\frac{m(m-1)}{4(N-1)},\\
b_2(Z)&=b_{12}\frac{km}{N-1}+b_{22}\frac{m(m-1)}{2(N-1)}+b_{23}\frac{mn}{N-1}+
b_{13}\frac{2kn}{N-1},\\
b_3(Z)&=b_{33}\frac{n(n-1)}{N-1}+b_{23}\frac{mn}{N-1}+b_{22}\frac{m(m-1)}{4(N-1)}.\ean

Note that if the population $Z$ has size $N$,
\ben\label{conditionb} b_1(Z)+b_2(Z)+b_3(Z)\leqslant rN.\een

We assume self-incompatibility, which implies that when the population size reaches $1$, no
birth can occur anymore and the population can be considered as
extinct. Now individuals can
die either naturally or due to competition with others. We
denote by $d_i$ the natural death rate of individuals with type $i$
and $c_{ij}$ the competition rate of $i$ against $j$, i.e. the rate
at which a fixed individual of type $i$ makes a fixed individual of
type $j$ die. We assume
\ben\label{conditionc}c_{ij}>0 \quad\quad\forall i,j\in\{1,2,3\},\quad\text{i.e.}\quad\underline{c}=\underset{i,j\in
\{1,2,3\}}{\inf}c_{ij}>0\een

and that when the population size reaches $2$, no death can occur,
hence the population cannot get extinct. We then denote the state space of $Z$ by
$$\mathbb{N}^3_{**}=\mathbb{N}^3\setminus\{(0,0,0),(1,0,0),(0,1,0),(0,0,1)\}.$$ In the population $Z=(k,m,n)$ such that
$k+m+n\geqslant3$, the rate $d^{(i)}(Z)$ at which the population
loses any individual of type $i$ then is:
\ban \label{deathrates}d^{(1)}(Z)&=(d_1+c_{11}(k-1)+c_{21}m+c_{31}n)k,\\
d^{(2)}(Z)&=(d_2+c_{12}k+c_{22}(m-1)+c_{32}n)m,\\
d^{(3)}(Z)&=(d_3+c_{13}k+c_{23}m+c_{33}(n-1))n,\ean

and if $k+m+n=2$,
\ben \label{deathrates2}d^{(1)}(Z)=d^{(2)}(Z)=d^{(3)}(Z)=0.\een

From $\eqref{conditionb}$, $\eqref{conditionc}$, and Theorem $2.7.1$
in \citet{Norris1997}, the process $Z$ does not explode. Then $Z_{t}$
is defined for all $t>0$, and we denote by $\mathbb{P}_{(k,m,n)}$
the law of $Z$ starting from state $(k,m,n)$, $\mathbb{E}_{(k,m,n)}$
the associated expectation, $(\mathcal{Z}_l)_{l\in\mathbb{N}}$ the
embedded Markov chain, and $(\mathcal{F}_l)_{l\in\mathbb{N}}$ the
filtration generated by $\mathcal{Z}$.

\textbf{Notation:} For every other process $X$,
$\mathbb{P}^{X}_{X_0}$ is the law of $X$ starting from $X_0$, and
$\mathbb{E}^{X}_{X_0}$ is the associated expectation. If $X$ is a
continuous-time (resp. discrete time) process, we denote $T^X_x$
(resp. $\mathcal{T}_x^X$) the reaching time of $x$ by $X$.

In the following, the population size process will play a main role;
we define $N:t\mapsto N_t=(k_t+m_t+n_t)$ where $Z_t=(k_t,m_t,n_t)$,
for every time $t>0$ and $(\mathcal{N}_l)_{l\in\mathbb{N}}$ the
embedded Markov chain. $N$ is stochastically dominated by the
logistic birth-and-death process $Y$ with transition rates: 

\ben
\label{transitionY} a_{ij}=\left\{\begin{array}{l}rj
\quad\quad\text{ if $j=i+1$, }\\\underline{c}j(j-1)
\quad\quad\text{if $j=i-1$ and $i\neq2$,}\\0
\quad\quad\text{otherwise}\end{array}\right.\een 

We define
$\mathcal{Y}$ the embedded Markov chain.

\begin{prop}\label{proptemps}
For all $N\in\mathbb{N}$, there exists $\rho>0$ such that
$\mathbb{E}_N((1+\rho)^{\mathcal{T}^{\mathcal{Y}}_2})<\infty$.
\end{prop}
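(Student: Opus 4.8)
The plan is to work entirely with the embedded chain $\mathcal Y$, which is a nearest-neighbour (birth–death) chain on $\{2,3,4,\dots\}$: from a state $i\geq 3$ it jumps up to $i+1$ with probability $p_i=\tfrac{r}{r+\underline c(i-1)}$ and down to $i-1$ with probability $q_i=\tfrac{\underline c(i-1)}{r+\underline c(i-1)}=1-p_i$, while from state $2$ it can only jump up to $3$. Since $\underline c>0$ we have $q_i>0$ for every $i\ge 3$ and, crucially, $q_i\to 1$ (equivalently $p_i\to 0$) as $i\to\infty$: the chain feels an arbitrarily strong downward drift at large states, and this is what produces the exponential moment. Note $\mathcal T^{\mathcal Y}_2$ is the first time the lower boundary $2$ is reached, so the only thing that can delay it is the adverse (upward) drift the chain may feel near the bottom, since $p_i>q_i$ for the first few values of $i$ when $r$ is large compared to $\underline c$.

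The engine of the proof is a geometric Lyapunov function. For $\theta>1$ a one–step computation gives, for $i\ge 3$, $\mathbb E[\theta^{\mathcal Y_{1}}\mid \mathcal Y_0=i]=\theta^i g(i)$ with $g(i)=p_i\theta+q_i\theta^{-1}$. Reading $g(i)$ as a weighted average of $\theta$ and $\theta^{-1}$ with weights $p_i,q_i$ shows that $g$ is decreasing in $i$ with limit $\theta^{-1}<1$. Hence, given $\varepsilon\in(0,1/4)$ we fix an integer $i^{*}$ with $q_i\ge 1-\varepsilon$ for all $i\ge i^{*}$ and take $\theta=\varepsilon^{-1/2}>1$, so that $g(i)\le \varepsilon\theta+\theta^{-1}=2\sqrt\varepsilon=:\alpha<1$ for every $i\ge i^{*}$. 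Setting $1+\rho_0=1/\alpha>1$, the process $\big((1+\rho_0)^{l}\,\theta^{\mathcal Y_{l}}\big)$ is a supermartingale as long as $\mathcal Y_l> i^{*}$. Stopping it at the entrance time $\sigma$ of the bottom region $\{2,\dots,i^{*}\}$ and combining the optional stopping theorem with monotone convergence yields, for every starting state $j>i^{*}$, the finite bound $\mathbb E_j\big[(1+\rho_0)^{\sigma}\big]\le \theta^{\,j-i^{*}}$; the same argument bounds the length of any upward excursion above level $i^{*}$.

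The remaining difficulty — which is the genuine obstacle — is the finite bottom region $\{2,\dots,i^{*}\}$, where the drift may point the wrong way, so that no globally geometric supersolution exists. I would dispose of it by a regeneration argument at the fixed state $i^{*}$. On one hand, $i^{*}-2$ consecutive down-steps carry the chain from $i^{*}$ to $2$, so by the Markov property each visit to $i^{*}$ is followed by absorption at $2$ before the next return to $i^{*}$ with probability at least $\delta:=\prod_{j=3}^{i^{*}}q_j>0$ (here $\underline c>0$ is essential); consequently the number of excursions away from $i^{*}$ before $\mathcal T^{\mathcal Y}_2$ is dominated by a geometric variable of parameter $\delta$. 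On the other hand, each excursion has a uniform exponential moment $M(\lambda)=\mathbb E_{i^{*}}[\lambda^{\,\mathrm{length}}]$: an upward excursion is controlled by the supermartingale of the previous paragraph, while a downward excursion stays in the finite set $\{2,\dots,i^{*}-1\}$ and therefore has geometric tails. Writing $\mathcal T^{\mathcal Y}_2\le \sigma+\sum_{j=1}^{G}\xi_j$ with $G$ geometric and $(\xi_j)$ i.i.d.\ excursion lengths, the strong Markov property gives $\mathbb E_N[\lambda^{\mathcal T^{\mathcal Y}_2}]\le \mathbb E_N[\lambda^{\sigma}]\cdot \dfrac{\delta\,M(\lambda)}{1-(1-\delta)M(\lambda)}$. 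Since $M(1)=1$ and $\lambda\mapsto M(\lambda)$ is continuous, one has $(1-\delta)M(\lambda)<1$ for $\lambda=1+\rho$ with $\rho>0$ small enough, which makes the right-hand side finite and proves the claim. The one point that requires real care is exactly this matching of the exponential radius with the geometric parameter, i.e.\ choosing $\rho$ small enough that the per-excursion moment $M(1+\rho)$ stays below $1/(1-\delta)$.
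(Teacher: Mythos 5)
Your proof is correct, but it is organized quite differently from the paper's. The paper factorizes $\mathcal{T}^{\mathcal{Y}}_2$ into the successive one-level descent times from $n$ to $n-1$ for $n=N,\dots,3$, establishes the exponential moment at the top level $n=N>N_0$ (where the embedded walk has downward drift) by invoking a classical result of Seneta, and then propagates it downward by induction: the descent from $n$ to $n-1$ consists of a geometric number of upward excursions whose lengths are controlled by the inductive hypothesis at level $n+1$, and one shrinks $\rho$ by dominated convergence so that $\mathbb{E}_{n+1}\bigl((1+\rho)^{\mathcal{T}^{\mathcal{Y}}_n+2}\bigr)<1/p$. You instead cut the state space once, at a level $i^{*}$ beyond which $q_i\geqslant 1-\varepsilon$: above $i^{*}$ the explicit supermartingale $(1+\rho_0)^{l}\theta^{\mathcal{Y}_l}$ yields the exponential moment of the entrance time into $\{2,\dots,i^{*}\}$ in one stroke, uniformly in the starting level, and below $i^{*}$ a single regeneration argument at $i^{*}$ (with per-visit absorption probability at least $\prod_{j=3}^{i^{*}}q_j>0$) finishes the proof. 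Your route is self-contained (no appeal to Seneta) and gives a quantitative drift estimate; the paper's route keeps every step a one-line geometric computation and reuses the same ``geometric number of excursions, then shrink $\rho$'' device verbatim later (in the unnumbered lemma preceding Theorem \ref{vanalytique}). The one place where your write-up is slightly informal is the closed form $\delta M(\lambda)/\bigl(1-(1-\delta)M(\lambda)\bigr)$: the length of an excursion is not independent of whether it ends in absorption, so the clean renewal identity is rather $\mathbb{E}_{i^{*}}\bigl[\lambda^{\mathcal{T}^{\mathcal{Y}}_2}\bigr]=\mathbb{E}\bigl[\lambda^{\xi}\mathbf{1}_{\mathrm{abs}}\bigr]/\bigl(1-\mathbb{E}\bigl[\lambda^{\xi}\mathbf{1}_{\mathrm{ret}}\bigr]\bigr)$, which is finite as soon as $M(\lambda)<1+\delta$; you already isolate exactly this point, and it does not affect the conclusion.
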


\begin{proof} Let $N_0$ be such that $b<(d+c(N_0-1))$.
We assume that $N>N_0$, without loss of generality. Note that it
suffices to prove that for every integer $n\in[3,N]$, there exists
$\rho_n>0$ such that
\ben\label{inductionT}\mathbb{E}_n\left((1+\rho_n)^{T^{\mathcal{Y}}_{n-1}}\right)<\infty.\een

Indeed, $\mathbb{E}_N((1+\rho)^{\mathcal{T}^{\mathcal{Y}}_2})
=\prod_{j=3}^N\mathbb{E}_j((1+\rho)^{T^{\mathcal{Y}}_{j-1}}<\infty$
if $\rho\leqslant\inf_i\rho_i$. Now, from \citet{Seneta1966} p.
$428$, \eqref{inductionT} is true for $n=N$, since $N>N_0$. Now,
following the proof of Lemma $5.11$ of \citet{ColletMartinez2012},
let us prove by induction that if \eqref{inductionT} is true for
$n+1$ then it is also true for $n$. We assume that
\eqref{inductionT} is true for $n+1$ and that $Y_0=n$, and we define
$M$ the random number of returns in $n$ before going to $n-1$. $M$
follows a geometrical law with parameter $p=b/(b+d+c(n-1))$. Then
$$\mathcal{T}^{\mathcal{Y}}_{n-1}=M+1+\sum_{i=1}^M\mathcal{T}_{n,i}$$

where the $\mathcal{T}_{n,i}$ are independent and distributed as
$\mathcal{T}^{\mathcal{Y}}_{n}$ for all $i$. Then by strong Markov
Property in the stopping times $\mathcal{T}_{n,i}$, we obtain
$$\mathbb{E}_n((1+\rho)^{\mathcal{T}^{\mathcal{Y}}_{n-1}})\leqslant\sum_{m=0}^{\infty}
\left(\mathbb{E}_{n+1}\left((1+\rho)^{\mathcal{T}^{\mathcal{Y}}_n+2}\right)\right)^m(1-p)p^m.$$

Finally, since $\eqref{inductionT}$ is true for $n+1$, from the
Dominated Convergence Theorem,
$\mathbb{E}_{n+1}\left((1+\rho)^{\mathcal{T}^{\mathcal{Y}}_n+2}\right)$
goes to $1$ when $\rho$ goes to $0$, hence there exists $\rho_{n-1}$
such that
$\mathbb{E}_{n+1}\left((1+\rho_{n-1})^{\mathcal{T}^{\mathcal{Y}}_n+2}\right)<1/p$
which gives the result. 
\end{proof}

\medskip

\begin{prop}\label{sup} For all $p\geqslant1$, if $\mathbb{E}(N_0^p)<\infty$
then $\underset{t\geqslant0}{\sup} \,\mathbb{E}(N_t^p)<\infty$. \end{prop}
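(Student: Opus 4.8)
The plan is to exploit the stochastic domination of $N$ by the logistic birth-and-death process $Y$ set up just above: since $x\mapsto x^p$ is nondecreasing on $[0,\infty)$ for $p\geqslant 1$, under a coupling with $Y_0=N_0$ one has $\mathbb{E}(N_t^p)\leqslant\mathbb{E}(Y_t^p)$ for every $t$. As $\mathbb{E}(Y_0^p)=\mathbb{E}(N_0^p)<\infty$, it therefore suffices to bound $\sup_{t\geqslant0}\mathbb{E}(Y_t^p)$, and the whole question reduces to a one-dimensional Foster--Lyapunov estimate for $Y$.

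First I would compute the action of the generator $\mathcal{L}_Y$ of $Y$ on the test function $V(j)=j^p$. Writing
\[ \mathcal{L}_Y V(j)=rj\left[(j+1)^p-j^p\right]+\underline{c}\,j(j-1)\left[(j-1)^p-j^p\right] \]
and expanding $(j\pm1)^p-j^p=\pm p\,j^{p-1}+O(j^{p-2})$, the birth term contributes $rp\,j^p+O(j^{p-1})$ while the competition term contributes $-\underline{c}\,p\,j^{p+1}+O(j^p)$. Hence $\mathcal{L}_Y V(j)=-\underline{c}\,p\,j^{p+1}+O(j^p)$, whose leading term is negative: the quadratic competition death rate dominates the linear birth rate at high density. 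Consequently there exist constants $A,B>0$ with $\mathcal{L}_Y V(j)\leqslant A-B\,j^{p+1}$ for all $j\geqslant0$, and since $j^{p+1}\geqslant j^p$ for integers $j\geqslant1$ this yields the linear drift condition $\mathcal{L}_Y V(j)\leqslant A-B\,V(j)$ for all $j$.

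From this drift condition I would conclude, via Dynkin's formula and Gronwall's lemma, that $u(t):=\mathbb{E}(Y_t^p)$ satisfies $u'(t)\leqslant A-B\,u(t)$, so that $u(t)\leqslant u(0)e^{-Bt}+\frac{A}{B}(1-e^{-Bt})\leqslant\max\!\left(u(0),A/B\right)$, uniformly in $t$. Combined with the first step this gives $\sup_{t\geqslant0}\mathbb{E}(N_t^p)\leqslant\max\!\left(\mathbb{E}(N_0^p),A/B\right)<\infty$, which is the claim.

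The main obstacle is technical rather than conceptual: Dynkin's formula cannot be applied directly, because the integrability of $\mathcal{L}_Y V(Y_s)$ --- equivalently the finiteness of the very moments being estimated --- is what remains to be proved. I would resolve this by localization. Setting $\tau_n=\inf\{t:Y_t\geqslant n\}$, with $\tau_n\uparrow\infty$ since the linear birth rates make $Y$ non-explosive, I apply Dynkin to the bounded stopped process $Y_{t\wedge\tau_n}$, bound $\mathcal{L}_Y V\leqslant A$ by discarding the nonpositive term $-B\,V(Y_s)$ to get $\mathbb{E}(Y_{t\wedge\tau_n}^p)\leqslant\mathbb{E}(Y_0^p)+At$, and let $n\to\infty$ by Fatou's lemma to obtain finiteness of $\mathbb{E}(Y_t^p)$ --- and, applying the same to the exponent $p+1$, of $\mathbb{E}(Y_t^{p+1})$ --- for each fixed $t$. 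Once finiteness is secured, $\mathcal{L}_Y V(Y_s)$ is dominated in $L^1$ by $A+B\,Y_s^{p+1}$, so dominated convergence lets me pass to the limit in the stopped Dynkin identity and recover the clean differential inequality $u'(t)\leqslant A-B\,u(t)$, closing the argument.
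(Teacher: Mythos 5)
Your strategy is genuinely different from the paper's: after the common first step (reduction to the dominating logistic process $Y$ with $Y_0=N_0$), the paper invokes ergodicity of $Y$ and a Durrett-style coupling of $Y$ with an independent stationary copy, controlling the coupling time through the exponential moment of $\mathcal{T}^{\mathcal{Y}}_2$ from Proposition \ref{proptemps}, and shows $\mathbb{E}(Y_t^p)\to E_p$; you instead run a Foster--Lyapunov drift argument on $V(j)=j^p$. Your drift computation is correct (the quadratic death rate does give $\mathcal{L}_YV(j)\leqslant A-B\,j^{p+1}\leqslant A-B\,V(j)$, the finitely many exceptional states near $j=2$ being absorbed into $A$), and this route is more elementary and self-contained than the paper's, since it does not need the stationary law or Proposition \ref{proptemps} at all.

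However, the way you close the argument has a genuine gap. To pass to the limit in the stopped Dynkin identity you invoke dominated convergence with dominating function $A+B\,Y_s^{p+1}$, and you justify its integrability by ``applying the same to the exponent $p+1$''. But that localization bound reads $\mathbb{E}(Y_t^{p+1})\leqslant\mathbb{E}(Y_0^{p+1})+A't$, and $\mathbb{E}(Y_0^{p+1})=\mathbb{E}(N_0^{p+1})$ is not assumed finite: the hypothesis only provides the $p$-th moment of $N_0$. As written, the final step is therefore unjustified. The gap is repairable without strengthening the hypothesis. Either keep the nonpositive part of the drift on the left-hand side of the stopped identity, namely $\mathbb{E}(V(Y_{t\wedge\tau_n}))+B\,\mathbb{E}\!\int_0^{t\wedge\tau_n}V(Y_s)\,ds\leqslant\mathbb{E}(V(Y_0))+At$, and let $n\to\infty$ using Fatou for the first term and monotone convergence for the second (both are nonnegative), which yields $u(t)+B\int_0^tu(s)\,ds\leqslant u(0)+At$ and hence the uniform bound; or, more directly, apply Dynkin's formula to $e^{Bs}V(Y_s)$ stopped at $\tau_n$, whose drift is bounded by $Ae^{Bs}$, giving $\mathbb{E}(V(Y_t))\leqslant e^{-Bt}\,\mathbb{E}(V(Y_0))+A/B$ with only $p$-th moments ever appearing. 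With either repair your proof is complete.
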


\begin{proof} We set $Y_0=N_0$. It suffices to prove that
$\sup_t\mathbb{E}(Y_t^p)<\infty$. $(Y_t)_{t>0}$ is a recurrent,
irreducible, and ergodic Markov process on
$\mathbb{N}\setminus\{0,1\}$, with stationary law $l$ (see Equation
\eqref{formulel} for a more general case), and we can easily check
that $E_p:=\displaystyle{\sum_{j=2}^{\infty}}l(j)j^p<\infty$ for all
$p$. Now let us define the Markov process $(Y_t,Z_t)_{t\geqslant0}$
such that $Y$ and $Z$ have same transition rates, are independent,
and $Z_0$ has law $l$. We define
$(\mathcal{Y}_n,\mathcal{Z}_n)_{n\in\mathbb{N}}$ the associated
Markov chain, and
$\mathcal{T}=\inf\{n|\mathcal{Y}_n=\mathcal{Z}_n\}$. Following the
proof of Theorem $6.6.4$ in \citet{Durrett2010} p. $308$, we have
\ba\left|\mathbb{E}(\mathcal{Y}_n^p)-E_p\right|=\left|\mathbb{E}(\mathcal{Y}_n^p)-\mathbb{E}(\mathcal{Z}_n^p)\right|&\leqslant\sum_{z\geqslant2}z^p|\mathbb{P}(\mathcal{Y}_n=z)-\mathbb{P}(\mathcal{Z}_n=z)|\\&
\leqslant\sum_{z\geqslant2}z^p(\mathbb{P}(\mathcal{Y}_n=z,\mathcal{T}>n)+\mathbb{P}(\mathcal{Y}_n=z,\mathcal{T}>n))\\&=
\mathbb{E}((\mathcal{Y}_n^p+\mathcal{Z}_n^p)\mathbf{1}_{\mathcal{T}>n})\\&\leqslant2\mathbb{E}(\mathcal{Y}_n^p\mathbf{1}_{\mathcal{T}>n}\mathbf{1}_{Y_0>Z_0})+2\mathbb{E}(\mathcal{Z}_n^p\mathbf{1}_{\mathcal{T}>n}\mathbf{1}_{Z_0>Y_0})\\&\leqslant2\mathbb{E}(\mathcal{Y}_n^p\mathbf{1}_{\mathcal{T}_2^{\mathcal{Y}}>n})+2\mathbb{E}(\mathcal{Z}_n^p\mathbf{1}_{\mathcal{T}_2^{\mathcal{Z}}>n}).\ea

Now
\ba\mathbb{E}(\mathcal{Y}_n^p\mathbf{1}_{\mathcal{T}_2^{\mathcal{Y}}>n})&\leqslant\sum_{z\geqslant2}(z+n)^p\mathbb{P}(\mathcal{T}^{\mathcal{Y}}_2>n;Y_0=z)\\&\leqslant2^p\sum_{z\geqslant
n}z^p\mathbb{P}(Y_0=z)+2^p\sum_{2\leqslant
z<n}n^p\mathbb{P}(\mathcal{T}_2^{\mathcal{Y}}\geqslant
n;Y_0=z)\\&\leqslant2^p\sum_{z\geqslant
n}z^p\mathbb{P}(Y_0=z)+2^pn^p\mathbb{P}(\mathcal{T}_2^{\mathcal{Y}}\geqslant
n).\ea

From Proposition $\ref{proptemps}$, and since $\mathbb{E}(Y_0^p)<\infty$, 
$n^p\mathbb{P}(\mathcal{T}_2^Y\geqslant n)$ and $\sum_{z\geqslant
n}z^p\mathbb{P}(Y_0=z)$ converge to $0$. Then $\mathbb{E}(Y_n^p)$
converges to $E_p$ when $n$ goes to infinity. Since $Y$ does not
explode and $\mathbb{E}(Y_0^p)<\infty$, we have
$\underset{t}{\sup}\,\mathbb{E}(Y_t^p)<\infty$.
\end{proof}

\section{Fixation probabilities}\label{sectionfixation}

\subsection{Absorbing states}
The birth and death process $Z$ admits the following absorbing states sets:
\begin{itemize}
\item $\Gamma_a=\{(0,0,n), n\geqslant2\}$ is the set of states for which allele $a$
is fixed and allele $A$ has disappeared.
\item $\Gamma_A=\{(k,0,0), k\geqslant2\}$ is the set of states for which allele $A$
is fixed and allele $a$ has disappeared.
\item $\Gamma:=\Gamma_a\cup\Gamma_A$
\end{itemize}

We are interested in computing the probability that allele $a$ goes
to fixation (i.e. $Z$ reaches $\Gamma_a$), when $Z$ starts from any
state $(k,m,n)$. We now define $\mathcal{T}_{\Omega}$ the (discrete) reaching time of set $\Omega$
by $\mathcal{Z}$ for all $\Omega\subset\mathbb{N}^3_{**}$.
The following result is an adaptation of Proposition $6.1.$ in
\citet{ChampagnatLambert2007} to the diploid case.

\begin{prop}\label{proptempsdarret} There exists a constant $C$ such
that for any initial state $(k,m,n)$ in $\mathbb{N}^3_{**}$,
$\mathbb{E}_{(k,m,n)}(\mathcal{T}_{\Gamma})\leqslant C(k+m+n).$
\end{prop}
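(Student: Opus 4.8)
The plan is to prove a drift estimate for the population size together with a fast‑absorption property near the boundary, arranged so that the linear factor $k+m+n$ is produced entirely by the initial descent of the size. Throughout write $N_l=k_l+m_l+n_l$ for the size at step $l$ of the embedded chain (so $N_0=k+m+n$), and for a state $Z$ of size $N$ set $B(Z)=\sum_i b_i(Z)$ and $D(Z)=\sum_i d^{(i)}(Z)$; the embedded chain then performs a birth with probability $B/(B+D)$ and a death with probability $D/(B+D)$.

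First I would establish a uniform negative drift of the size for large populations. By \eqref{conditionb} one has $B(Z)\le rN$, while summing the competition terms in \eqref{deathrates} and bounding every $c_{ij}$ below by $\underline{c}$ gives $D(Z)\ge\underline{c}\,N(N-1)$ (since $k(k-1)+m(m-1)+n(n-1)+2(km+kn+mn)=N(N-1)$). Hence the one‑step drift of the size satisfies
\ben
\mathbb{E}[N_{l+1}-N_l\mid\mathcal{Z}_l=Z]=\frac{B(Z)-D(Z)}{B(Z)+D(Z)}\le\frac{rN-\underline{c}\,N(N-1)}{rN+\underline{c}\,N(N-1)},
\een
which is $\le-\tfrac12$ as soon as $N\ge N^{\ast}:=\lceil 1+3r/\underline{c}\rceil$, uniformly over all configurations of that size. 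Letting $\sigma$ be the first step at which the size drops to $N^{\ast}$ or the chain hits $\Gamma$, the process $N_{l\wedge\sigma}+\tfrac12(l\wedge\sigma)$ is a supermartingale; optional stopping applied to $l\wedge\sigma$, followed by monotone convergence, yields $\mathbb{E}_{(k,m,n)}[\sigma]\le 2N_0=2(k+m+n)$. This is the step that manufactures the linear dependence.

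It then remains to bound, by a constant independent of the starting point, the expected number of further steps from the finite ``core'' $\mathcal{C}=\{Z:N\le N^{\ast},\ Z\notin\Gamma\}$. Because $\Gamma$ is accessible from every state in finitely many steps and $\mathcal{C}$ is finite, there exist $K_0\in\mathbb{N}$ and $p_0>0$ with $\mathbb{P}_Z(\mathcal{T}_\Gamma\le K_0)\ge p_0$ for all $Z\in\mathcal{C}$. Organising the trajectory into rounds consisting of $K_0$ steps followed, on failure, by a descent back to $\mathcal{C}\cup\Gamma$, each round succeeds with probability at least $p_0$ and, by the descent estimate applied from a state of size at most $N^{\ast}+K_0$, has conditionally expected length at most $K_0+2(N^{\ast}+K_0)$. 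A Wald‑type summation over a geometrically bounded number of rounds gives $\sup_{Z\in\mathcal{C}}\mathbb{E}_Z[\mathcal{T}_\Gamma]\le C_2<\infty$. Combining with the descent bound gives $\mathbb{E}_{(k,m,n)}[\mathcal{T}_\Gamma]\le 2(k+m+n)+C_2$, and since $k+m+n\ge2$ on $\mathbb{N}^3_{**}$ this is at most $C(k+m+n)$.

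The main obstacle is the uniformity underlying the two estimates. For the drift I must check that $B\le rN$ and $D\ge\underline{c}\,N(N-1)$ hold for \emph{every} configuration of a given size, so that $N^{\ast}$ is genuinely state‑independent, and that optional stopping is legitimate even though $\mathcal{T}_\Gamma$ is a priori only known to be finite; this is handled by stopping at $l\wedge\sigma$ and passing to the limit. The more delicate point is the accessibility‑with‑uniform‑lower‑bound claim behind $C_2$: one must verify that from each of the finitely many core states, including the size‑two states $(1,1,0),(1,0,1),(0,1,1),(0,2,0)$ where only births are available, there is a path to $\Gamma$ of bounded length and positive probability, which relies on positivity of the relevant birth rates together with the fact that deaths become available as soon as the size reaches $3$. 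Once accessibility is secured, finiteness of $\mathcal{C}$ makes $K_0$ and $p_0$ uniform and the argument closes.
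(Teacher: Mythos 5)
Your proof is correct, and while it shares the paper's two-phase architecture --- a descent to a bounded region in expected time linear in the initial size, followed by a uniformly bounded absorption time from that region --- the two phases are implemented quite differently. For the descent, the paper dominates the size chain by the embedded chain $\mathcal{Y}$ of the logistic process $Y$ and compares it above a level $N_0$ with a random walk of leftward bias $2/3$, quoting $\mathbb{E}(U_{N,N_0})=3(N-N_0)$; you instead run a Foster--Lyapunov argument directly on the embedded chain, showing that the one-step drift of the size is at most $-\tfrac12$ above $N^{\ast}=\lceil 1+3r/\underline{c}\rceil$ (the monotonicity of $(B,D)\mapsto(B-D)/(B+D)$ makes this uniform over all configurations of a given size) and applying optional stopping to the supermartingale $N_{l\wedge\sigma}+\tfrac12(l\wedge\sigma)$. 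This is more self-contained: no auxiliary process and no external citation for the hitting time of the biased walk. For the bounded region, the paper descends all the way to $\{N=2\}$ and exploits the renewal inequality $(1-\overline{p})\,\overline{\mathcal{T}}_{\{2\}\rightarrow\Gamma}\leqslant\overline{\mathcal{T}}_{\{2\}\rightarrow\{2\}}$, with finiteness of the return time to $\{N=2\}$ coming from positive recurrence of $Y$; you stop at the larger finite core $\{N\leqslant N^{\ast}\}$ and use a geometric-trials argument with a uniform $K_0$-step hitting probability $p_0$, re-invoking the descent estimate after each failed trial because $K_0$ steps may carry the chain back above $N^{\ast}$. The bookkeeping there (strong Markov property at the round boundaries, Wald-type summation with $\{R\geqslant j\}$ measurable at the start of round $j$) is standard and sound. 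Both arguments ultimately rest on the same accessibility claim --- a positive-probability path of bounded length from every non-absorbed state of the core to $\Gamma$, which implicitly requires the relevant $b_{ij}=rp_{ij}$ to be positive; the paper asserts this for size-two states without detail, and you at least flag it explicitly. Two cosmetic points: take $N^{\ast}\geqslant3$ so that the death rates \eqref{deathrates} rather than \eqref{deathrates2} apply throughout the descent, and note that absorbing the additive constant $C_2$ into $C(k+m+n)$ uses $k+m+n\geqslant2$ on $\mathbb{N}^3_{**}$, as you observe.
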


\begin{proof} Let $\mathcal{T}_{\{2\}}$ be the first time where
the Markov chain $\mathcal{N}$ reaches $2$ (or returns to $2$ if
$\mathcal{N}_0=2$), and define
$$\overline{\mathcal{T}}_{\{2\}\rightarrow\Gamma}:=
\sup_{\substack{(k,m,n)|\\k+m+n=2}}\mathbb{E}_{(k,m,n)}(\mathcal{T}_{\Gamma}).$$
Then $\mathbb{E}_{(k,m,n)}(\mathcal{T}_{\Gamma})\leqslant
\mathbb{E}_{(k,m,n)}(\mathcal{T}_{\{2\}})
+\overline{\mathcal{T}}_{\{2\}\rightarrow\Gamma},$ and
$\overline{\mathcal{T}}_{\{2\}\rightarrow\Gamma}$ is independent of
$(k,m,n)$. We prove first that
$\overline{\mathcal{T}}_{\{2\}\rightarrow\Gamma}<\infty$ and second
that there exists a constant $C_1$ such that
$\mathbb{E}_{(k,m,n)}(\mathcal{T}_{\{2\}})<C_1(k+m+n)$ for all
$(k,m,n)$ in $\mathbb{N}^3_{**}$. Now,

\ban\label{overlineT2}\overline{\mathcal{T}}_{\{2\}\rightarrow\Gamma}&=
\sup_{\substack{(k,m,n)|\\k+m+n=2}}\mathbb{E}_{(k,m,n)}\left(\mathcal{T}_{\Gamma}
\mathbf{1}_{\{\mathcal{T}_{\{2\}}\geqslant
\mathcal{T}_{\Gamma}\}}+\mathcal{T}_{\Gamma}\mathbf{1}_{\{\mathcal{T}_{\{2\}}
<\mathcal{T}_{\Gamma}\}}\right)\\
&\leqslant\sup_{\substack{(k,m,n)|\\k+m+n=2}}\mathbb{E}_{(k,m,n)}(\mathcal{T}_{\{2\}})
+\sup_{\substack{(k,m,n)|\\k+m+n=2}}\mathbb{E}_{(k,m,n)}\left((\mathcal{T}_{\Gamma}-\mathcal{T}_{\{2\}})
\mathbf{1}_{\{\mathcal{T}_{\{2\}}<\mathcal{T}_{\Gamma}\}}\right)\\&
\leqslant\sup_{\substack{(k,m,n)|\\k+m+n=2}}\mathbb{E}_{(k,m,n)}(\mathcal{T}_{\{2\}})\\&
+\sup_{\substack{(k,m,n)|\\k+m+n=2}}\sum_{\substack{(k',m',n')|\\k'+m'+n'=2}}\mathbb{E}_{(k,m,n)}
\left((\mathcal{T}_{\Gamma}-\mathcal{T}_{\{2\}})
\mathbf{1}_{\{\mathcal{T}_{\{2\}}<\mathcal{T}_{\Gamma}\}}\mathbf{1}_{Z_{T_{\{2\}}}=(k',m',n')}\right)
\\&
\leqslant \sup_{\substack{(k,m,n)|\\k+m+n=2}}\mathbb{E}_{(k,m,n)}(\mathcal{T}_{\{2\}})
+\overline{\mathcal{T}}_{\{2\}\rightarrow\Gamma}\sup_{\substack{(k,m,n)|\\k+m+n=2}}\mathbb{P}_{(k,m,n)}
(\{\mathcal{T}_{\{2\}}
< \mathcal{T}_{\Gamma}\}),\ean

where the last inequality is obtained using the strong Markov property in $\mathcal{T}_{\{2\}}$. Defining
\ba \overline{p}&=\sup_{(k,m,n)|k+m+n=2}\mathbb{P}_{(k,m,n)}(\mathcal{T}_{\{2\}}< \mathcal{T}_{\Gamma}) \quad\text{and} \\
\overline{\mathcal{T}}_{\{2\}\rightarrow\{2\}}&=\sup_{(k,m,n)|k+m+n=2}\mathbb{E}_{(k,m,n)}(\mathcal{T}_{\{2\}}),\ea

we have $\overline{p}<1$, since for every $(k,m,n)$ such that
$k+m+n=2$, there exists a path for $Z$ starting from $(k,m,n)$ and
reaching $\Gamma$ before reaching the set $\{N=2\}$. Besides,
$\overline{\mathcal{T}}_{\{2\}\rightarrow\{2\}}$ is bounded by the
expectation of the mean time of coming back in $\{N=2\}$ for the
process $Y$ defined by Equation $\eqref{transitionY}$. So
$\overline{\mathcal{T}}_{\{2\}\rightarrow\{2\}}<\infty$, from
Theorem $3.3.3$ of \citet{Norris1997}. Finally, from
$\eqref{overlineT2}$,
$(1-\overline{p})\overline{\mathcal{T}}_{\{2\}\rightarrow\Gamma}\leqslant
\overline{\mathcal{T}}_{\{2\}\rightarrow\{2\}},$ then
$\overline{\mathcal{T}}_{\{2\}\rightarrow\Gamma}<\infty$.
Now, let us consider the Markov chain $(\mathcal{Y}_n)_{n\in\mathbb{N}}$ on
$\mathbb{N}\setminus\{0,1\}$, associated with $Y$. $\mathcal{N}$ being stochastically dominated by $\mathcal{Y}$, if $N=k+m+n$,
$\mathbb{E}_{(k,m,n)}(\mathcal{T}_{\{2\}})
\leqslant \mathbb{E}^\mathcal{Y}_N(\inf\{n|\mathcal{Y}_n=2\}).$ Define $S_{N,i}=\mathbb{E}^\mathcal{Y}_N(\inf\{n|\mathcal{Y}_n=i\})$ and let $N_0\geqslant2$ be a natural integer such that
$\frac{\overline{b}}{\overline{b}+\underline{c}N_0}\leqslant\frac{1}{3}$.
If $N\geqslant N_0$ then $S_{N,2}=S_{N,N_0}+S_{N_0,2}.$ Moreover, since
$\frac{\overline{b}}{\overline{b}+\underline{c}N}\leqslant\frac{1}{3}$
for all $N\geqslant N_0$, $S_{N,N_0}\leqslant\mathbb{E}(U_{N,N_0})$
where $U_{N,i}$ is the first reaching time of $i$, for the discrete
time random walk on $\mathbb{Z}$ starting from $N$ and having
probability $1/3$ to jump one step on the right and $2/3$ to jump one
step on the left, for every state. We know that
$\mathbb{E}(U_{N,N_0})=3(N-N_0)$ \citet{Norris1997}, pp. $21-22$. So if $N\geqslant N_0$,
$\mathbb{E}(S_{N,2})\leqslant\mathbb{E}(S_{N_0,2})+3(N-N_0)$. Then
there exists a constant $C_1>0$ such that $\mathbb{E}(S_{N,2})<C_1N$ for
all $N\geqslant2$. \end{proof}

\medskip
We now consider the fixation probabilities of allele $a$ as a function of the initial state of the population. We define $F_a=\{(Z_t)_{t>0} \text{ reaches }
\Gamma_a\}$ and $u(Z)=\mathbb{E}_Z(\mathbf{1}_{F_a})$ is the fixation
probability of allele $a$ knowing that the population starts from
state $Z$. $u$ also depends on the demographic parameters of the
population, and this dependence will be explicitely written down when
necessary. Note that $(u(Z_t))_{t>0}$ is a martingale since
\ben\label{umartingale} u(Z_t)=u(k_t,m_t,n_t)=\mathbb{E}_{Z_t}(\mathbf{1}_{F_a})=\mathbb{E}(\mathbf{1}_{F_a}|\mathcal{F}_t).\een

In the neutral case (Section \ref{sectioncasneutre}), a martingale
argument gives us the value of $u$, and in the non-neutral case
with small mutation assumption (Section \ref{sectioncasnonneutre}),
we prove that $u$ admits a Taylor expansion in the parameters of
deviation from the neutral case.

\subsection{Neutral case}\label{sectioncasneutre}
We now consider the neutral case when ecological parameters do not
depend on genotypes, i.e. when $b_{ij}=b$, $c_{ij}=c$, and $d_i=d$
for all $i$ and $j$ in $\{1,2,3\}$. We first prove the

\begin{prop} \label{casneutre}
In the neutral case, for all $(k,m,n)$ in $\mathbb{N}^3_{**}$ and for all ecological parameters $b$, $d$ and $c$,
$$u(k,m,n)=\frac{m+2n}{2(k+m+n)}.$$
\end{prop}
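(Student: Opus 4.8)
The plan is to recognize the candidate value $v(k,m,n):=\frac{m+2n}{2(k+m+n)}$ as the frequency of allele $a$ in the population: the $N=k+m+n$ individuals together carry $m+2n$ copies of allele $a$ (one per $Aa$ individual, two per $aa$ individual) out of $2N$ alleles in total. The whole proof rests on showing that in the neutral case this frequency is conserved in expectation along the dynamics, i.e. that $v$ is harmonic for the generator $\mathcal{L}$ of $Z$ (equivalently for the transition kernel of the embedded chain $\mathcal{Z}$) on the non-absorbing states, so that $(v(Z_t))_t$ is a bounded martingale. An optional-stopping argument at the absorption time then identifies $v$ with the absorption probability $u$.

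First I would verify $\mathcal{L}v\equiv0$ on $\mathbb{N}^3_{**}\setminus\Gamma$ by treating births and deaths separately. Writing $a_1=0$, $a_2=1$, $a_3=2$ for the number of $a$-alleles carried by a newborn of type $i$, a birth of type $i$ changes $v$ by $\frac{1}{2}\cdot\frac{Na_i-(m+2n)}{N(N+1)}$, while a death of type $i$ changes it by $\frac{1}{2}\cdot\frac{(m+2n)-Na_i}{N(N-1)}$. For deaths the key point is that in the neutral case \eqref{deathrates} gives every individual the same per-capita death rate $d+c(N-1)$, so $d^{(i)}(Z)=(d+c(N-1))\,c_i$ with $(c_1,c_2,c_3)=(k,m,n)$; using $\sum_i c_i=N$ and $\sum_i c_i a_i=m+2n$, the death contribution to $\mathcal{L}v$ telescopes to $(d+c(N-1))\bigl[(m+2n)N-N(m+2n)\bigr]/\bigl(2N(N-1)\bigr)=0$.

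The heart of the matter is the birth contribution, where I must show $\sum_i b_i(Z)=bN$ and $\sum_i b_i(Z)\,a_i=b\,(m+2n)$ (with $b_{ij}=b$ in the neutral case), which gives $\sum_i b_i(Z)\bigl(Na_i-(m+2n)\bigr)=bN(m+2n)-(m+2n)bN=0$. Conceptually this is exactly the statement that Mendelian reproduction is frequency-neutral: the newborn draws each of its two alleles from a uniformly chosen allele of a uniformly chosen (distinct) parent, so the expected number of its $a$-alleles equals $(m+2n)/N$, the mean $a$-count per individual. Concretely it follows from \eqref{birthrates} by the short algebraic identity $b_2(Z)+2b_3(Z)=b(m+2n)$, obtained by expanding and using $k+m+n=N$. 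I expect this to be the only computation requiring care; the case $N=2$, where deaths are frozen by \eqref{deathrates2}, is covered by this same birth computation alone.

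Finally I would read off the boundary values $v\equiv1$ on $\Gamma_a=\{(0,0,n),\,n\geqslant2\}$ and $v\equiv0$ on $\Gamma_A=\{(k,0,0),\,k\geqslant2\}$. Since $\mathcal{L}v\equiv0$, the process $(v(\mathcal{Z}_l))_l$ is a bounded martingale; by Proposition \ref{proptempsdarret} the absorption time $\mathcal{T}_{\Gamma}$ is almost surely finite, so optional stopping combined with dominated convergence (using $0\leqslant v\leqslant1$) yields $v(k,m,n)=\mathbb{E}_{(k,m,n)}\bigl(v(\mathcal{Z}_{\mathcal{T}_{\Gamma}})\bigr)$. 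On $\Gamma$ the integrand equals $\mathbf{1}_{\Gamma_a}$, so the right-hand side is $\mathbb{P}_{(k,m,n)}(F_a)=u(k,m,n)$, which is the desired identity.
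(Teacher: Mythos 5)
Your proposal is correct and follows essentially the same route as the paper: both identify $p(k,m,n)=(m+2n)/(2N)$ as the allele-$a$ frequency, verify that it is a bounded martingale under the neutral dynamics (the paper by decomposing $\mathbb{E}(p(\mathcal{Z}_{l+1})\mid\mathcal{Z}_l)$ over the six possible events, you equivalently via $\mathcal{L}p=0$ using the identities $\sum_i b_i(Z)=bN$ and $\sum_i b_i(Z)a_i=b(m+2n)$), and conclude by optional stopping at $\mathcal{T}_\Gamma$, whose a.s.\ finiteness is supplied by Proposition \ref{proptempsdarret}. Your explicit verification of the Mendelian frequency-neutrality identity $b_2(Z)+2b_3(Z)=b(m+2n)$ is a welcome detail that the paper leaves implicit.
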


\begin{proof} Let us define the function $p:(k,m,n)\mapsto
(m+2n)/2(k+m+n)$ and denote by $T_l$ the $l$-th jump time of the
population (i.e. the time at which occurs the $l$-th event, birth or
death).  The Markov chain $(p(\mathcal{Z}_l))_{l\in\mathbb{N}}$
gives the successive proportions of allele $a$ in the population. We
now prove that $p(\mathcal{Z}_l))_{l\in\mathbb{N}}$ is a
$\mathcal{F}_{l}$-bounded martingale. To this aim, we distinguish
two types of states: those where the population size is greater or
equal to $3$ and those where it is equal to $2$. For
$\mathcal{Z}_l=(k_l,m_l,n_l)$ such that $\mathcal{N}_l\geqslant3$,
one can compute $\mathbb{E}(p(\mathcal{Z}_{l+1})|\mathcal{Z}_l)$ by
decomposing it according to the nature of the $l+1$-th event:
\ba
\mathbb{E}(p(\mathcal{Z}_{l+1})|\mathcal{Z}_l)&=\frac{2\mathcal{N}_lp(\mathcal{Z}_l)-2}{2\mathcal{N}_l-2}\mathbb{P}(
death\text{ } of \text{  }aa)
+\frac{2\mathcal{N}_lp(\mathcal{Z}_l)-1}{2\mathcal{N}_l-2}\mathbb{P}(death\text{
} of\text{  }
Aa)\\&+\frac{2\mathcal{N}_lp(\mathcal{Z}_l)}{2\mathcal{N}_l-2}\mathbb{P}(death
\text{ }of\text{  }
AA)+\frac{2\mathcal{N}_lp(\mathcal{Z}_l)+2}{2\mathcal{N}_l+2}\mathbb{P}(birth
\text{ }of\text{  }
aa)\\&+\frac{2\mathcal{N}_lp(\mathcal{Z}_l)+1}{2\mathcal{N}_l+2}\mathbb{P}(birth\text{
} of\text{ }
Aa)+\frac{2\mathcal{N}_lp(\mathcal{Z}_l)}{2\mathcal{N}_l+2}\mathbb{P}(birth
\text{ }of\text{  } AA)\\&=p(\mathcal{Z}_l).\ea

The same result can be easily proved for $N_l=2$.
\medskip
From Doob's stopping time theorem applied to the bounded martingale
$(p(\mathcal{Z}_l))_{l}$ and to the stopping time
$\mathcal{T}_{\Gamma}$ (a.s. finite, from Proposition
\ref{proptempsdarret}), we get:
\be\mathbb{E}_{k,m,n}(p(\mathcal{Z}_{\mathcal{T}_{\Gamma}}))=\frac{2n+m}{2(k+m+n)}.\ee
Now
\ba\mathbb{E}_{k,m,n}(p(\mathcal{Z}_{\mathcal{T}_{\Gamma}}))&=
\mathbb{E}_{k,m,n}(p(\mathcal{Z}_{\mathcal{T}_{\Gamma}})\mathbf{1}_{T_{\Gamma_a}<T_{\Gamma_A}})
+\mathbb{E}_{k,m,n}(p(\mathcal{Z}_{\mathcal{T}_{\Gamma}})\mathbf{1}_{T_{\Gamma_a}>T_{\Gamma_A}})
\\&=\mathbb{P}_{k,m,n}(T_{\Gamma_a}<T_{\Gamma_A})=u(k,m,n)\ea
since
$\mathbb{E}_{k,m,n}(p(\mathcal{Z}_{\mathcal{T}_{\Gamma}})|T_{\Gamma_a}<T_{\Gamma_A})=1$
and
$\mathbb{E}_{k,m,n}(p(\mathcal{Z}_{\mathcal{T}_{\Gamma}})|T_{\Gamma_a}>T_{\Gamma_A})=0$.\end{proof}

\medskip
When the mutation is
not neutral, we do not obtain any closed formula for $p(Z)$ as
previously. We instead consider the Dirichlet problem satisfied by $u$.

\subsection{Deviation from the neutral case}\label{sectioncasnonneutre}

\subsubsection{A Dirichlet Problem}

We now arbitrarily assume that allele $a$ is slightly deleterious,
i.e. the demographic parameters $(b_{ij})_{i,j}$, $(c_{ij})_{i,j}$,
and $(d_i)_{i}$ are less advantageous for genotypes $Aa$ and
$aa$ than for genotypes $AA$, and slightly deviate from the
neutral case. This latter assumption (small mutation sizes) is
justified in biology papers such as \citet{Orr1998,Orr1999} which show that species evolution is partly due to the
fixation of a large number of small mutations. Besides, we assume that carrying allele $a$ only
influences the natural death rate of individuals. More precisely, we
set
\ban \label{equationtaux} b_{ij}&=b \quad\forall i,j,\\ c_{ij}&=c, \quad\forall i,j, \text{ whereas }\\
d_1=d, \quad d_2=&d+\delta \quad\text{ and }\quad d_3=d+\delta',\ean

where $\delta$ and
$\delta'$ are close to $0$. Note that if $\delta'$ is positive and
$\delta$ is equal to $0$, then allele $a$ is deleterious. The effect
of $\delta$ is more intricate because it affects heterozygous
individuals, with the same apparent effect on both alleles. It
simply represents a more or less important adaptation of heterozygotes
compared to $AA$ homozygotes and as we will see later (Subsection
\ref{sectiondelta}), its role in the deleterious or positive effect of allele $a$
depends on the initial genetic repartition of the population. We
denote by $L^{\delta,\delta'}$ the infinitesimal generator of $Z$
with assumptions $\eqref{equationtaux}$, and by $u((k,m,n), \delta,
\delta')$ the fixation probability of allele $a$, knowing that $Z$
starts from $(k,m,n)$, for all $(k,m,n)$ in
$\mathbb{N}^3_{**}$. We then have for all real bounded function $f$ on $\mathbb{N}^3_{**}$:

\ba (L^{\delta,\delta'}
f)(k,m,n)\!&=b_1(Z)f(k+1,m,n)+b_2(Z)f(k,m+1,n)+b_3(Z)f(k,m,n+1)
\\&+d^{(1)}(Z)f(k-1,m,n)+d^{(2)}(Z)f(k,m-1,n)+d^{(3)}(Z)f(k,m,n-1)
\\&-(bN+(d+c(N-1))N+\delta m+\delta'n)f(k,m,n).\ea

We define from $\eqref{birthrates}$, $\eqref{deathrates}$, and
$\eqref{deathrates2}$, the infinitesimal generator

\ban\label{formuleL}
(Lv)(k,m,n)&=(L^{0,0}v)(k,m,n)\\&=\frac{b}{N-1}\left[\left(k(k-1)+km+\frac{m(m-1)}{4}\right)\right.v(k+1,m,n)
\\&\phantom{\frac{b}{N-1}[}+\left(km+\frac{m(m-1)}{2}+mn+2kn\right)v(k,m+1,n)\\&\phantom{\frac{b}{N-1}[}
+\left.\left(n(n-1)+mn+\frac{m(m-1)}{4}\right)v(k,m,n+1)\right]
\\&+(d+c(N-1))\left[kv(k-1,m,n)+mv(k,m-1,n)+nv(k,m,n-1)\right]\\&-(bN+dN+cN(N-1))v(k,m,n) \quad\quad\text{ if } k+m+n\geqslant3,\\(Lv)(k,m,n)&=\frac{b}{N-1}\left[\left(k(k-1)+km+\frac{m(m-1)}{4}\right)\right.v(k+1,m,n)
\\&\phantom{\frac{b}{N-1}[}+\left(km+\frac{m(m-1)}{2}+mn+2kn\right)v(k,m+1,n)\\&\phantom{\frac{b}{N-1}[}
+\left.\left(n(n-1)+mn+\frac{m(m-1)}{4}\right)v(k,m,n+1)\right]\\&-bNv(k,m,n) \quad\quad\text{ if } k+m+n=2.\ean

Using that $(u(Z_t,\delta,\delta'))_{t\leqslant0}$ is a bounded martingale if $Z$
has infinitesimal generator $L^{\delta,\delta'}$ (Equation $\eqref{umartingale}$), we obtain the

\begin{prop}\label{equationu}
$u(.,\delta,\delta')$ satisfies:
\ben\label{Deltau}\left\{\begin{array}{l} (L^{\delta,\delta'}
u(.,\delta,\delta'))(k,m,n)=0 \quad\forall (k,m,n) | N=k+m+n\geqslant2\\
   u((0,0,n),\delta,\delta')=1 \quad\forall n\geqslant2\\
   u((k,0,0),\delta,\delta')=0 \quad\forall k\geqslant2
\end{array}\right.\een
\end{prop}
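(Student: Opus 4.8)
The plan is to verify the three lines of \eqref{Deltau} separately: the two boundary conditions by direct inspection of the absorbing sets, and the interior equation by turning the martingale property \eqref{umartingale} into a pointwise generator identity. First I would dispose of the boundary conditions. The sets $\Gamma_a$ and $\Gamma_A$ are absorbing for $Z$: from an all-$aa$ population the only allele available for Mendelian segregation is $a$, so no individual carrying $A$ can ever be produced and the process stays in $\Gamma_a$ forever, and symmetrically for $\Gamma_A$ (this is read directly off the birth rates \eqref{birthrates}). Hence if $Z_0=(0,0,n)$ with $n\geqslant2$ the event $F_a$ has already occurred and $u((0,0,n),\delta,\delta')=\mathbb{E}_{(0,0,n)}(\mathbf{1}_{F_a})=1$, while if $Z_0=(k,0,0)$ with $k\geqslant2$ then reaching $\Gamma_a$ is impossible and $u((k,0,0),\delta,\delta')=0$; both are immediate from the definition of $u$.

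For the interior relation I would exploit that $(u(Z_t,\delta,\delta'))_{t\geqslant0}$ is a bounded martingale, as recorded in \eqref{umartingale}. Since $u$ takes values in $[0,1]$ and, by \eqref{conditionb}-\eqref{conditionc} with Theorem $2.7.1$ of \citet{Norris1997}, $Z$ is non-explosive, Dynkin's formula gives that $M_t:=u(Z_t,\delta,\delta')-u(Z_0,\delta,\delta')-\int_0^t (L^{\delta,\delta'}u(\cdot,\delta,\delta'))(Z_s)\,ds$ is a martingale. Subtracting the martingale $u(Z_t,\delta,\delta')-u(Z_0,\delta,\delta')$, the remaining term $\int_0^t (L^{\delta,\delta'}u)(Z_s)\,ds$ is simultaneously a martingale and a continuous adapted process of finite variation, hence almost surely identically $0$. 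Starting $Z$ from a fixed $z=(k,m,n)$ with $N\geqslant2$, the chain occupies $z$ throughout an initial holding interval of positive length on which the integrand equals $(L^{\delta,\delta'}u)(z)$, and letting $t$ shrink to the start forces $(L^{\delta,\delta'}u)(k,m,n)=0$.

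Equivalently, and this is the concrete computation I expect to carry out, one conditions on the first jump: by the strong Markov property at the first jump time $T_1$, together with the fact that $F_a=\{Z\text{ reaches }\Gamma_a\}$ depends only on the trajectory after $T_1$, one obtains $u(z,\delta,\delta')=\mathbb{E}_z(u(Z_{T_1},\delta,\delta'))=\sum_{z'}\frac{q(z,z')}{q(z)}\,u(z',\delta,\delta')$, where the $q(z,z')$ are the jump rates extracted from \eqref{formuleL} (with the $\delta,\delta'$ corrections to the death rates) and $q(z)=\sum_{z'}q(z,z')$. Multiplying by $q(z)$ and rearranging yields $\sum_{z'}q(z,z')\bigl(u(z',\delta,\delta')-u(z,\delta,\delta')\bigr)=0$, which is exactly $(L^{\delta,\delta'}u)(z)=0$; note this also holds trivially on $\Gamma_a$ and $\Gamma_A$, where every reachable state carries the same value of $u$.

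The main difficulty is not any single calculation but the bookkeeping that makes the martingale-to-generator passage rigorous: one must check that $u$ lies in the domain of $L^{\delta,\delta'}$ and that $\int_0^t |(L^{\delta,\delta'}u)(Z_s)|\,ds$ is integrable so that $M_t$ is a genuine martingale. This is controlled because the differences $u(z')-u(z)$ are bounded by $1$ while the total rate $q(Z_s)$ grows only quadratically in $N_s$, so $|(L^{\delta,\delta'}u)(Z_s)|\leqslant C N_s^2$ and $\int_0^t\mathbb{E}(N_s^2)\,ds<\infty$ by Proposition \ref{sup}. One must also verify that $F_a$ is measurable with respect to the post-$T_1$ $\sigma$-field, and that $Z$ reaches $\Gamma=\Gamma_a\cup\Gamma_A$ almost surely so that $u$ is genuinely an absorption probability; the latter is supplied by Proposition \ref{proptempsdarret}, which gives $\mathbb{E}_{(k,m,n)}(\mathcal{T}_\Gamma)<\infty$ and hence $\mathbb{P}_{(k,m,n)}(\mathcal{T}_\Gamma<\infty)=1$. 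Once these points are settled, either route collapses the stochastic statement into the pointwise system \eqref{Deltau}.
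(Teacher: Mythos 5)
Your proposal is correct and follows the same route as the paper, which derives \eqref{Deltau} directly from the bounded-martingale property \eqref{umartingale} of $(u(Z_t,\delta,\delta'))_{t\geqslant0}$ and reads the boundary conditions off the absorbing sets. The additional bookkeeping you supply (non-explosion, integrability of the generator term, almost-sure absorption via Proposition \ref{proptempsdarret}) is exactly what the paper leaves implicit.
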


Our main result in this section is the following theorem studying in detail the deviation of $u$ from the neutral case.

\begin{thm}\label{maintheorem}
For all $(k,m,n)$ in $\mathbb{N}^3_{**}$, the function
$(\delta,\delta')\mapsto u((k,m,n),\delta,\delta')$
is an analytic function of $(\delta,\delta')$ in the neighborhood of $(0,0)$. Moreover,
\be u((k,m,n),\delta,\delta')=p(k,m,n)-\delta v(k,m,n)-\delta'v'(k,m,n)+o(|\delta|+|\delta'|),\ee where
\begin{eqnarray}\label{formulev}v(k,m,n)&=&(k-n)\left[\frac{m}{N}x_N+\frac{N^2-(k-n)^2}{N^2}y_N\right],
\\\label{formulev'}v'(k,m,n)&=&\frac{nY}{N}x_N+mx'_N+Y(2N-Y)\left(\frac{y'_N}{N}-\frac{Y}{2N^2}y_N\right).
\end{eqnarray}
The sequences $x_{N}$, $y_{N}$, $x'_{N}$, and $y'_{N}$ are defined as the unique bounded solutions of $2$-order recurrence equations (Propositions \ref{propositionv} and \ref{propositionv'}).
\end{thm}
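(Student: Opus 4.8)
The plan is to treat $L^{\delta,\delta'}$ as a regular perturbation of the neutral generator $L=L^{0,0}$ from \eqref{formuleL}, whose harmonic function $p$ is already known from Proposition \ref{casneutre}. Comparing the death rates used in Proposition \ref{equationu} with those in \eqref{formuleL}, one sees that the only effect of $(\delta,\delta')$ is on the death rates of types $2$ and $3$, so that
\[
L^{\delta,\delta'}=L+\delta D_2+\delta' D_3,\qquad (D_2 f)(k,m,n)=m\bigl[f(k,m-1,n)-f(k,m,n)\bigr],\quad (D_3 f)(k,m,n)=n\bigl[f(k,m,n-1)-f(k,m,n)\bigr].
\]
By Propositions \ref{casneutre} and \ref{equationu}, $p$ is $L$-harmonic in the interior with $p=1$ on $\Gamma_a$ and $p=0$ on $\Gamma_A$, so setting $h^{\delta,\delta'}:=u(\cdot,\delta,\delta')-p$ the Dirichlet problem \eqref{Deltau} becomes the inhomogeneous problem $Lh=-(\delta D_2+\delta' D_3)(p+h)$ in the interior, with $h=0$ on $\Gamma$. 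Introducing the Green operator $(Gf)(z):=\mathbb{E}_z\!\bigl[\int_0^{T_\Gamma}f(Z_s)\,ds\bigr]$, which is well defined on functions of polynomial growth by the hitting-time bound of Proposition \ref{proptempsdarret} and the moment bounds of Proposition \ref{sup}, Dynkin's formula rewrites this as the fixed-point equation $h=G(\delta D_2+\delta' D_3)(p+h)$.

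For the analyticity I would solve this fixed point by a Neumann series. With $K:=G(\delta D_2+\delta' D_3)$ and $h_0:=G(\delta D_2+\delta' D_3)p$, the candidate solution is $h=\sum_{j\geqslant0}K^jh_0$, a power series in $(\delta,\delta')$ whose coefficients are fixed functions on $\mathbb{N}^3_{**}$; its convergence would give both analyticity and, by uniqueness of the fixed point, the identification $h=u(\cdot,\delta,\delta')-p$ (note $u-p$ is bounded, hence lies in the space). The crux — and the step I expect to be hardest — is to make this rigorous in spite of the unboundedness of $D_2,D_3$. I would work in a weighted sup-norm space $\{f:\ |f(z)|\leqslant C(1+N(z))^q\}$ and show that $D_2,D_3$ raise the weight by one power of $N$ while the Green operator lowers it by one, the gain being quantified by the exponential moments of $T_\Gamma$ (Propositions \ref{proptemps} and \ref{proptempsdarret}) and the uniform polynomial moments of $N_t$ (Proposition \ref{sup}); concretely one needs an occupation estimate of the type $\mathbb{E}_z\bigl[\int_0^{T_\Gamma}(1+N_s)^{q+1}\,ds\bigr]\leqslant C(1+N(z))^q$, which the logistic domination by $Y$ makes plausible. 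These bounds give $\|K\|\leqslant C(|\delta|+|\delta'|)$, so the series converges in a polydisc around $(0,0)$ and $h$ is analytic there.

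The first-order term is then read off from $h_0$, since $Kh_0$ is already quadratic in $(\delta,\delta')$: the linear part of $h$ is $\delta\,G(D_2 p)+\delta'\,G(D_3 p)$. Thus, in the notation $u=p-\delta v-\delta' v'+o(|\delta|+|\delta'|)$, the functions $v=-G(D_2 p)$ and $v'=-G(D_3 p)$ are exactly the unique subpolynomial solutions of the Poisson problems $Lv=D_2 p$ and $Lv'=D_3 p$ with $v=v'=0$ on $\Gamma$. A direct computation of the finite differences of $p$ gives the forcing terms $(D_2 p)(k,m,n)=\dfrac{m(n-k)}{2N(N-1)}$ and $(D_3 p)(k,m,n)=\dfrac{n\bigl((m+2n)-2N\bigr)}{2N(N-1)}=-\dfrac{n(2k+m)}{2N(N-1)}$, i.e. multiples of the shape functions $(k-n)m$ and $n(2k+m)$ divided by $2N(N-1)$.

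It remains to verify the closed formulas \eqref{formulev}–\eqref{formulev'}. Here I would exploit that each ansatz is a combination, with $N$-dependent coefficients, of shape functions of the coordinates $(N,\,k-n,\,m)$ — namely $(k-n)m/N$ and $(k-n)(N^2-(k-n)^2)/N^2$ for $v$, and the analogous expressions built from $n$, $m$ and the variable $Y=2k+m$ (the number of copies of the resident allele $A$) for $v'$ — every one of which vanishes on $\Gamma$, so that the boundary conditions $v=v'=0$ on $\Gamma$ hold automatically. Substituting the ansatz into $Lv=D_2 p$, rewriting each of the six transitions of $Z$ in the coordinates $(N,k-n,m)$ (births and deaths shift $N\mapsto N\pm1$ and hence bring in $x_{N\pm1},y_{N\pm1}$), and collecting the coefficient of each shape function, the three-dimensional equation collapses to a system of second-order recurrences in $N$ for $(x_N,y_N)$; the same computation for $v'$ yields the recurrences for $(x'_N,y'_N)$, inhomogeneous because they are coupled to the already determined sequences $x_N,y_N$ appearing in \eqref{formulev'}. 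Existence and uniqueness of the bounded sequences solving these recurrences are the content of Propositions \ref{propositionv} and \ref{propositionv'}, and the subpolynomiality of the resulting $v,v'$ matches the Green-operator solutions above. The main labor in this last step is purely algebraic: checking that $L$ maps the chosen finite-dimensional span of shape functions into itself, so that the cross terms cancel and the recurrences genuinely decouple from the remaining spatial variables.
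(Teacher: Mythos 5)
Your overall architecture is sound and genuinely different from the paper's. Where you linearize the Dirichlet problem around the neutral generator and solve the fixed-point equation $h=G(\delta D_2+\delta' D_3)(p+h)$ by a Neumann series in a weighted space, the paper instead expands $u$ as a sum over paths of products of transition probabilities, differentiates term by term (using the uniform bound on $|\partial_\delta\pi^{\delta}_{ij}|$ together with $\mathbb{E}_{(k,m,n)}(\mathcal{T}_\Gamma)\leqslant C(k+m+n)$ from Proposition \ref{proptempsdarret}), and obtains analyticity by extending the transition probabilities to complex $(\delta,\delta')$ and dominating the path sum via the exponential moment $\mathbb{E}_{k,m,n}((1+\beta^2)^{\mathcal{T}_\Gamma})<\infty$. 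The two routes encode the same cancellation (the perturbation is $O(N)$ per step while the total jump rate is $O(N^2)$), and yours would arguably deliver analyticity more directly from the convergent power series; your computation of the forcing term $D_2p=\frac{m(n-k)}{2N(N-1)}$ is correct and matches \eqref{Deltavmin}, and deferring the closed formulas to the recurrences of Propositions \ref{propositionv} and \ref{propositionv'} is exactly what the theorem statement intends.

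However, two load-bearing steps are asserted rather than proved. First, the occupation estimate $\mathbb{E}_z[\int_0^{T_\Gamma}(1+N_s)^{q+1}ds]\leqslant C(1+N(z))^q$, which is what makes $K$ a contraction on your weighted space, is only called ``plausible''; it is strictly stronger than anything in the paper (Proposition \ref{proptempsdarret} only bounds the expected number of jumps) and needs its own Lyapunov or supermartingale argument with a constant uniform in the starting point. Without it the Neumann series, and hence the analyticity claim, does not close. Second, you invoke uniqueness of the subpolynomial solution of $Lv=D_2p$ with $v=0$ on $\Gamma$ in order to identify the Green-operator solution with the shape-function ansatz; this is not automatic on an infinite state space and is precisely the content of the second half of Proposition \ref{vSL} (a Liouville-type statement proved by optional stopping of the uniformly integrable martingale $(h(\mathcal{Z}_l))_l$, using the moment bounds of Proposition \ref{sup}). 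You should either reproduce that argument or cite it explicitly. Two smaller points: your $D_2,D_3$ must be set to zero on states with $k+m+n=2$, since no deaths occur there and $L^{\delta,\delta'}=L$ on that layer, which modifies the source term of the Poisson problem on those states; and your sign $D_3p=-\frac{nY}{2N(N-1)}$ (which I believe is the correct one) is opposite to the paper's equation \eqref{deltav'}, so before citing the recurrences of Proposition \ref{propositionv'} verbatim you need to reconcile the sign conventions, as those recurrences are calibrated to the paper's version of the forcing term.
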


The proof of this theorem is decomposed in several parts: the existence and formula
of the two partial derivatives is obtained in Sections
\ref{sectiondelta} and \ref{sectiondelta'} and the analyticity of $u$ is in Section \ref{sectiondifferentiabilite}. In the following subsections, we consider separately the cases where
$\delta=0$ and $\delta'=0$.

\subsubsection{The dependence of $u$ in $\delta$}\label{sectiondelta}

To simplify notations, we define:
$u((k,m,n),\delta)=u((k,m,n),\delta,0)$.
We will show that the derivative of $u$ at $\delta=0$
is the unique sub-polynomial (i.e. lower than a
polynomial function in $N=k+m+n$) solution of a nonlinear recurrence
equation in $(k,m,n)$. Such result has been obtained in \citet{ChampagnatLambert2007} for the haploid case. Here, the nonlinearity due to both competition and
diploid segregation terms generates new mathematical difficulties. We
will use some arguments developped in \citet{ChampagnatLambert2007} and will
here focus on the difficulties brought by diploidy. We say that a function $f$ on $\mathbb{N}^3$ is sublinear if there exists a constant $C$ such that $|f(k,m,n)|\leqslant C(k+m+n)$ for every $(k,m,n)$.
\begin{prop} \label{vSL} For all $(k,m,n)$ in $\mathbb{N}^3_{**}$,

$u((k,m,n),.)$ is differentiable at $0$. Its derivative $v(k,m,n)$ is the unique sublinear solution of the system of equations
\ben\left\{\begin{array}{l}(Lv)(k,m,n)=\frac{m(n-k)}{2N(N-1)}\quad\forall(k,m,n)\in\mathbb{N}^3_{**}\\v(2,0,0)=v(0,0,2)=0\end{array}\right.\label{Deltavmin}\een
\end{prop}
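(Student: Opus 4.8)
The plan is to treat the three assertions of the proposition separately—existence of a sublinear solution, its uniqueness, and its identification with (minus) the $\delta$-derivative of $u$—after first recording two algebraic facts about the neutral probability $p$. Writing $L^{\delta}:=L^{\delta,0}$, formula \eqref{formuleL} gives $L^{\delta}=L+\delta D$ with $(Df)(k,m,n)=m\big(f(k,m-1,n)-f(k,m,n)\big)$. By Proposition \ref{casneutre} one has $Lp=0$, and a one-line computation yields $p(k,m-1,n)-p(k,m,n)=\frac{n-k}{2N(N-1)}$, so that $Dp=h$, where $h(k,m,n):=\frac{m(n-k)}{2N(N-1)}$ is exactly the right-hand side of \eqref{Deltavmin}. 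Note that $|h|\leqslant1$ on $\mathbb{N}^3_{**}$ and that $h$ vanishes on $\Gamma$.

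For existence I would define $v$ through the Feynman--Kac formula $v(Z):=-\mathbb{E}^0_Z\big[\int_0^{T_\Gamma}h(Z_s)\,ds\big]$, the superscript $0$ denoting the neutral law. Since $h$ is bounded and the total jump rate out of non-absorbing states is bounded below by some $q_{\min}>0$, one has $\mathbb{E}^0_Z[T_\Gamma]\leqslant q_{\min}^{-1}\,\mathbb{E}_Z[\mathcal{T}_\Gamma]\leqslant CN$ by Proposition \ref{proptempsdarret}, so $v$ is well defined and sublinear. Dynkin's formula then gives $Lv=h$ at every non-absorbing state and $v=0$ on $\Gamma$; in particular $v(2,0,0)=v(0,0,2)=0$, so this $v$ solves \eqref{Deltavmin}.

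For uniqueness, let $w$ be the difference of two sublinear solutions, so that $Lw=0$ on all of $\mathbb{N}^3_{**}$ and $w(2,0,0)=w(0,0,2)=0$. On the line $\Gamma_a$ the equation $(Lw)(0,0,n)=0$ becomes the birth--death recurrence $bn\big(w(0,0,n+1)-w(0,0,n)\big)=(d+c(n-1))n\big(w(0,0,n)-w(0,0,n-1)\big)$ for $n\geqslant3$, while the absence of death at $N=2$ forces $w(0,0,3)=w(0,0,2)$; by induction $w\equiv w(0,0,2)=0$ on $\Gamma_a$, and likewise $w\equiv0$ on $\Gamma_A$. On non-absorbing states $Lw=0$ means that $\big(w(\mathcal{Z}_l)\big)_l$ is a martingale for the embedded chain, and optional stopping at $\mathcal{T}_\Gamma$ gives $w(Z_0)=\mathbb{E}_{Z_0}\big[w(Z_{T_\Gamma})\big]=0$. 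To license the stopping I would use $\big|w(\mathcal{Z}_{l\wedge\mathcal{T}_\Gamma})\big|\leqslant C\sup_{j\leqslant\mathcal{T}_\Gamma}\mathcal{N}_j\leqslant C(\mathcal{N}_0+\mathcal{T}^{\mathcal{Y}}_2)$, which follows from the domination of $\mathcal{N}$ by $\mathcal{Y}$ and from $\mathcal{Y}$ moving by unit steps; this bound is integrable by Proposition \ref{proptemps}, yielding the required uniform integrability.

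Finally, for differentiability and identification I would work under the neutral law. Since $L^{\delta}u(\cdot,\delta)=0$ on the interior and $L^{\delta}=L+\delta D$, one has $Lu(\cdot,\delta)=-\delta\,Du(\cdot,\delta)$, and optional stopping at $T_\Gamma$ together with the boundary values of $u$ yields $u(Z,\delta)-p(Z)=\delta\,\mathbb{E}^0_Z\big[\int_0^{T_\Gamma}(Du(\cdot,\delta))(Z_s)\,ds\big]$, using $\mathbb{P}^0_Z(F_a)=p(Z)$. Because $|u|\leqslant1$ we have $|Du(\cdot,\delta)|\leqslant N_s$ uniformly in $\delta$; bounding the right-hand side by $\delta\,\mathbb{E}^0_Z\big[\int_0^{T_\Gamma}N_s\,ds\big]$ gives at once the continuity $u(\cdot,\delta)\to p$ as $\delta\to0$, and then dominated convergence—using $Du(\cdot,\delta)\to Dp=h$ pointwise—shows that $\big(u(Z,\delta)-p(Z)\big)/\delta\to\mathbb{E}^0_Z\big[\int_0^{T_\Gamma}h(Z_s)\,ds\big]=-v(Z)$. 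Thus $u(Z,\cdot)$ is differentiable at $0$ with derivative $-v$, and by the uniqueness step this $v$ is the solution of \eqref{Deltavmin}. The main obstacle is the integrability $\mathbb{E}^0_Z\big[\int_0^{T_\Gamma}N_s\,ds\big]<\infty$ (and the analogous uniform integrability in the uniqueness step): all the a priori controls on $h$, on $v$, and on $Du(\cdot,\delta)$ only grow linearly in the unbounded population size, so finiteness must be extracted by pitting the exponential moment of the absorption time (Proposition \ref{proptemps}) against the polynomial bound $\sup_t\mathbb{E}(N_t^p)<\infty$ (Proposition \ref{sup}), for instance via $\int_0^\infty\mathbb{E}^0_Z[N_s\mathbf{1}_{s<T_\Gamma}]\,ds\leqslant\int_0^\infty\sqrt{\mathbb{E}[N_s^2]}\,\sqrt{\mathbb{P}(T_\Gamma>s)}\,ds<\infty$.
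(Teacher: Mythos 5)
Your proposal takes a genuinely different route from the paper's. The paper differentiates the path-sum representation $u((k,m,n),\delta)=\sum_{\text{paths}}\pi^{\delta}_{i_1i_2}\cdots\pi^{\delta}_{i_{l-1}i_l}$ term by term, bounds each $|\partial_\delta\pi^{\delta}|$ by a universal constant, and obtains differentiability and the sublinear bound $|v|\leqslant C\,\mathbb{E}_{(k,m,n)}(\mathcal{T}_{\Gamma})\leqslant C'N$ in one stroke; the equation for $v$ is then read off by identifying first-order terms in the Dirichlet problem, and uniqueness is handled by essentially the same martingale argument you give. Your route --- writing $L^{\delta}=L+\delta D$, representing the candidate by the occupation-time formula $v=-\mathbb{E}^0_Z[\int_0^{T_\Gamma}h(Z_s)\,ds]$, and obtaining the derivative by dominated convergence in the exact identity $u(\cdot,\delta)-p=\delta\,\mathbb{E}^0_Z[\int_0^{T_\Gamma}(Du(\cdot,\delta))(Z_s)\,ds]$ --- is conceptually cleaner and yields the probabilistic representation of $v$ for free (the paper only gets its analogue later, in Proposition \ref{propsignev}). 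The price is the integrability $\mathbb{E}^0_Z[\int_0^{T_\Gamma}N_s\,ds]<\infty$, which is strictly more than the paper's $\mathbb{E}_{(k,m,n)}(\mathcal{T}_{\Gamma})\leqslant CN$: your Cauchy--Schwarz sketch needs exponential tails for the continuous-time absorption time, i.e.\ essentially the lemma the paper only proves in Section \ref{sectiongeneralisation}. This is doable but is real extra work that your proof defers.

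Two points need repair. First, the decomposition $L^{\delta}=L+\delta D$ with $(Df)(k,m,n)=m\,(f(k,m-1,n)-f(k,m,n))$ fails on $\{N=2\}$: by \eqref{deathrates2} no death occurs there, so the perturbation vanishes and $L^{\delta,0}=L$ on $\{N=2\}$ (indeed $(1,0,0)$ is not even a state). Hence the first-order equation at $(1,1,0)$ and $(0,1,1)$ is $Lv=0$, not $Lv=h$ --- consistent with the constraint $x_2+\frac{3}{2}y_2=\frac{4}{3}x_3+2y_3$ in the Lemma of Section \ref{sectionpreuvebpetit} --- so the function you construct (which solves $Lv=h$ everywhere) is not the derivative of $u$; you must replace $h$ by $h\,\mathbf{1}_{\{N\geqslant3\}}$ both in the Feynman--Kac formula and in the identification step. (This glitch is already present in the statement of \eqref{Deltavmin}, so you are inheriting it, but your generator identity makes it an actual error rather than an imprecision.) Second, the uniform-integrability bound $\sup_{j\leqslant\mathcal{T}_{\Gamma}}\mathcal{N}_j\leqslant C(\mathcal{N}_0+\mathcal{T}^{\mathcal{Y}}_2)$ is wrong: the dominating chain returns to $2$ and climbs again while $\mathcal{Z}$ may still be unabsorbed, so $\mathcal{T}_{\Gamma}$ is not controlled by $\mathcal{T}^{\mathcal{Y}}_2$. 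The fix is immediate: since $\mathcal{N}$ moves by unit steps, $|w(\mathcal{Z}_{l\wedge\mathcal{T}_{\Gamma}})|\leqslant C(\mathcal{N}_0+\mathcal{T}_{\Gamma})$, which is integrable by Proposition \ref{proptempsdarret}; no appeal to Proposition \ref{proptemps} is needed there.
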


\begin{proof} As in the simplest case of haploid populations, we introduce
paths of $Z$, i.e. the sequence of states visited by this process.
Indeed the fixation probability of the mutant allele $a$ if the
population $Z$ starts from state $(k,m,n)$ can be written as the sum
of the probabilities of every path starting from $(k,m,n)$ and
reaching a state $(0,0,n')$ with $n'\geqslant2$. We then denote by
$S_{(k,m,n)\rightarrow\Omega}$ the set of paths linking
$(k,m,n)\notin\Gamma$ to $\Omega$ without reaching $\Gamma$ before
$\Omega$, and $(i_1,i_2,...,i_l)$ a path, $i_j$ being the $j$-th
state of the path. We finally denote by $\pi^{\delta}_{i_ji_{j+1}}$
the transition probability from state $i_j$ to state $i_{j+1}$ for
$Z$. Then
\be u((k,m,n),\delta)=\sum_{(i_1,..i_l)\in
S_{(k,m,n)\rightarrow\Gamma_a}}\pi^{\delta}_{i_1i_2}...\pi^{\delta}_{i_{l-1}i_l}.\ee
Now $\pi^{\delta}_{i_ji_{j+1}}$ is a differentiable function of $\delta$ and the absolute value of
its derivative at $\delta=0$ is bounded independently of $(k,m,n)$
by a constant denoted by $C_1$. To prove this latter assertion, we
consider separately the different possible transitions for the
population in state $(k,m,n)$. For instance the transition
probability from state $(k,m,n)$ to state $(k+1,m,n)$ is
\be\pi^{\delta}_{(k,m,n),(k+1,m,n)}=\frac{b(k(k-1)+km+m(m-1)/4)}{(N-1)(bN+dN+\delta
m+cN(N-1))}.\ee

Then $\pi^{\delta}_{(k,m,n),(k+1,m,n)}$ is
differentiable with respect to $\delta$ at $0$, and:
\ba\left|\frac{\partial\pi^{\delta}_{(k,m,n)(k+1,m,n)}}{\partial\delta}\right|_{\delta=0}&
=\frac{mb(k(k-1)+km+m(m-1)/4)}{(N-1)(bN+dN+cN(N-1))^2}
\\&\leqslant\frac{m}{bN+dN+cN(N-1)}\leqslant\frac{2}{b+d+2c}.\ea

Similar computations are made for other possible transitions. Then
$u^{\delta}_{(k,m,n)}$ is differentiable with respect to $\delta$ at
$\delta=0$ and
\ba \left|\frac{\partial
u((k,m,n),\delta)}{\partial\delta}\right|_{\delta=0}&=\sum_{\substack{(i_1,..i_l)\in\\
S_{(k,m,n)\rightarrow\Gamma_a}}}\sum_{l'=1}^{l-1}\pi^{0}_{i_1i_2}...
\pi^{0}_{i_{l'-1}i_l'}\left|\frac{\partial\pi^{\delta}_{i_{l'}i_{l'+1}}}{\partial\delta}\right|_{\delta=0}
\pi^{0}_{i_{1'+1}i_{l'+2}}...\pi^{0}_{i_{l-1}i_l}
\\&\leqslant C_1\sum_{l'\geqslant 1}\;\sum_{(k',m',n')\in\mathbb{N}^3}\;\sum_{\substack{(i_1,...,i_{l'})\in \\S_{(k,m,n)
\rightarrow(k',m',n')}}}\pi^0_{i_1i_2}...
\pi^0_{i_{l'-1}i_l'}\\&\phantom{\leqslant C_1 \sum_{l'\geqslant
1}}\times\sum_{\epsilon\in\mathbb{N}^3,\|\epsilon\|=1}\;
\sum_{\substack{l''\geqslant 0,(j_1,...,j_{l''})\in\\
S_{(k',m',n')+\epsilon\rightarrow\Gamma_a}}}\pi^0_{j_1j_2}...\pi^0_{j_{l''-1}j_{l''}}.\ea

Then,
\ba \left|v(k,m,n)\right|&\leqslant
C_1\sum_{l'\geqslant
1}\;\sum_{(k',m',n')\in\mathbb{N}^3\setminus\Gamma}\;
\sum_{(i_1,...,i_{l'})\in
S_{(k,m,n)\rightarrow(k',m',n')}}\pi^0_{i_1i_2}...\pi^0_{i_{l'-1}i_l'}
\\&\phantom{\leqslant C_1 \sum_{l'\geqslant
1}\sum_{(k',m',n')}}\times\sum_{\epsilon\in\mathbb{N}^3,\|\epsilon\|=1}
\mathbb{P}_{(k',m',n')+\epsilon}(T_{\Gamma_a}<T_{\Gamma_A})
\\&\leqslant 6C_1\sum_{l'\geqslant 1}\mathbb{P}_{(k,m,n)}(\mathcal{T}^{\nu}_{\Gamma}>l')
\quad\text{the latter sum being lower than $6$.}\\&
=6C_1\mathbb{E}_{(k,m,n)}(\mathcal{T}_{\Gamma}-1).\ea

From Proposition \ref{proptempsdarret},
$\mathbb{E}_{(k,m,n)}(\mathcal{T}_{\Gamma})<C_2(k+m+n)$ for a
constant $C_2$, which gives that $u((k,m,n),.)$ is differentiable with respect to
$\delta$ and that its derivative at $0$ $v(k,m,n)$ is sublinear.

\medskip
Now, identifying the first order terms in $\delta$ in
$\eqref{Deltau}$, we see that $v$ satisfies for all $(k,m,n)\in\mathbb{N}^3_{**}$:
\ben\label{Deltav}\left\{\begin{array}{l}(Lv)(k,m,n)=\frac{m(n-k)}{2N(N-1)} \quad\forall(k,m,n)\in\mathbb{N}^3_{**}\\
            v(k,0,0)=v(0,0,n)=0 \quad \text{if } k\geqslant2 \text{ and } n\geqslant2
           \end{array}\right.\een

It remains to prove that the system of Equations $\eqref{Deltavmin}$ admits a unique sub-polynomial solution. Let $h$ be a sub-polynomial solution of the equation $Lh=0$ such that $h(2,0,0)=h(0,0,2)=0$. Then $(h(\mathcal{Z}_{l}))_{l\in\mathbb{N}}$ is a $\mathcal{F}_l$-martingale. On $\Gamma_A$, $Lh(k,m,n)=0$ gives
\be bk(h(k+1,0,0)-h(k,0,0))=(dk+ck(k-1))(h(k,0,0)-h(k-1,0,0)) \quad\forall k\geqslant3\ee

which implies that $h\equiv 0$ on $\Gamma_A$ since $h$ is sub-polynomial and $h(2,0,0)=0$. Similarly, $h\equiv 0$ on $\Gamma_a$. Besides, there exists a positive integer $q$ such that
\be \sup_t\mathbb{E}_{k,m,n}(|h(Z_t)|^2)\leqslant
\sup_t\mathbb{E}_{k,m,n}(C|k_t+m_t+n_t|^{2q}).\ee

Moreover, from Proposition \ref{sup},
$\sup_t\mathbb{E}_{k,m,n}(|k_t+m_t+n_t|^{2q})<+\infty$ for all $(k,m,n)$ in $\mathbb{N}^3_{**}$. Then the martingale $(h(\mathcal{Z}_l))_{l\in\mathbb{N}}$ is uniformly
integrable. From Doob's stopping time theorem applied in the stopping time
$\mathcal{T}_{\Gamma}$, we then have $0=\mathbb{E}_{k,m,n}(h(\mathcal{Z}_{\mathcal{T}_{\Gamma}}))=h(k,m,n).$ \end{proof}

\medskip
Let us now state the following proposition whose proof will be the aim of Section \ref{sectionpreuve}.

\begin{prop}\label{propositionv}
For all $(k,m,n)$ such that $k+m+n\geqslant2$,
\be v(k,m,n)=(k-n)\left[\frac{m}{N}x_N+\frac{N^2-(k-n)^2}{N^2}y_N\right]\ee
where the sequence of vectors $(z_N)_{N\geqslant3}=\left(\begin{aligned}&x_N\\&y_N\end{aligned}\right)_{N\geqslant3}$
is the unique subpolynomial solution of the following system of equations:
\begin{eqnarray}B_Nz_{N+1}&=&C_Nz_N+D_Nz_{N-1}+f_N
\quad\quad\text{for all $N\geqslant4$}\label{equationz}\\
         B_3z_4&=&\tilde{C}_3z_3+\tilde{f}_3\label{CIz},
       \end{eqnarray}
with
\ba
B_N&:=\frac{b}{2(N-1)(N+1)}\left(\begin{array}{cc}1&
\frac{2N^2+4N-3}{N+1}\\2N^2-3&\frac{-3}{N+1}\end{array}\right),\\
C_N&:=(b+d+c(N-1))\left(\begin{array}{cc}0&\frac{1}{N}\\1&0\end{array}\right),\\
\tilde{C_3}&:=\left(\begin{array}{cc}0
&\frac{b+d+2c}{3}\\b+\frac{d+2c}{3}&-(d+2c)\end{array}\right)
=C_3-\left(\begin{array}{cc}0&0\\\frac{2}{3}(d+2c)&(d+2c)\end{array}\right),\\
D_N&:=-\frac{d+c(N-1)}{N-1}\left(\begin{array}{cc}0&\frac{N-3}{N-1}\\N-2&\frac{3}{N-1}\end{array}\right), \\
f_N&:=\left(\begin{array}{c}0\\\frac{-1}{2N(N-1)}\end{array}\right).\ea
\end{prop}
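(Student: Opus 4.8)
The plan is to obtain the explicit formula by feeding the proposed profile into the characterization of $v$ already proved in Proposition \ref{vSL}, and to read off the recurrence by collecting terms. The natural first observation is that $v$ is antisymmetric under the allele swap $k\leftrightarrow n$: relabelling $A$ and $a$ leaves the $\delta$-perturbed process statistically unchanged (the perturbation acts on the symmetric genotype $Aa$) while exchanging the events ``$a$ fixes'' and ``$A$ fixes'', so that $u((k,m,n),\delta)=1-u((n,m,k),\delta)$, and comparing with $u=p-\delta v+o(\delta)$ gives $v(k,m,n)=-v(n,m,k)$. This suggests passing to the coordinates $N=k+m+n$ and $j=k-n$, for which $k=(N-m+j)/2$ and $n=(N-m-j)/2$, and looking for $v$ in the form $v=(k-n)\big[\tfrac{m}{N}x_N+\tfrac{N^2-(k-n)^2}{N^2}y_N\big]=\tfrac{jm}{N}x_N+\tfrac{j(N^2-j^2)}{N^2}y_N$, an odd function of $j$ whose coefficients depend only on $N$. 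I would also note at once that this profile automatically satisfies the boundary conditions $v(2,0,0)=v(0,0,2)=0$ of \eqref{Deltavmin}, since both correspond to $m=0$, $N=2$, $j=\pm2$, where the bracket vanishes.

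The core of the work is then substituting this profile into $(Lv)(k,m,n)=\tfrac{m(n-k)}{2N(N-1)}=-\tfrac{jm}{2N(N-1)}$, using the explicit generator \eqref{formuleL}. The six shift operators move $(N,m,j)$ in a controlled way: the three birth terms send $N\mapsto N+1$ with $j\mapsto j+1,\,j,\,j-1$, the three death terms send $N\mapsto N-1$ with $j\mapsto j-1,\,j,\,j+1$, and the diagonal term keeps $N$ fixed. After writing $k$ and $n$ as above, each birth and death coefficient becomes an explicit polynomial in $(m,j)$ with $N$-dependent coefficients, and the requirement $(Lv)(k,m,n)=-\tfrac{jm}{2N(N-1)}$ becomes a polynomial identity in $(m,j)$ that must hold on every slice $N=\text{const}$. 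Since $Lv$ is odd in $j$ (the symmetric generator commutes with $j\mapsto -j$) while the right-hand side is the single monomial $jm$, I would collect the coefficients of the surviving odd monomials; the high-degree ones cancel (see below) and the low-degree ones organise into two coupled scalar relations between $z_{N+1}$, $z_N$ and $z_{N-1}$, assembling into the announced recurrence $B_Nz_{N+1}=C_Nz_N+D_Nz_{N-1}+f_N$. The forcing $-\tfrac{jm}{2N(N-1)}$ enters only the $jm$-coefficient, which is why $f_N$ carries a single nonzero entry $-\tfrac{1}{2N(N-1)}$.

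The step I expect to be the genuine obstacle is verifying that this matching really closes into a $2$-dimensional system rather than producing extra, incompatible monomials. The profile is cubic in $j$, and multiplying by the quadratic birth numerators formally generates terms up to degree five in $(m,j)$, well above the degree two of the forcing. The mechanism that rescues the computation is that the three birth numerators $k(k-1)+km+\tfrac{m(m-1)}{4}$, $km+\tfrac{m(m-1)}{2}+mn+2kn$ and $n(n-1)+mn+\tfrac{m(m-1)}{4}$ sum to $N(N-1)$, so their homogeneous parts of positive degree in $(m,j)$ cancel; since moreover the top-degree ($j^3$) part of the shifted profile is the same for all three birth shifts, the degree-five contribution vanishes identically. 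Carrying this bookkeeping down one degree at a time — the degree-four terms cancelling between birth and death contributions, and so on — is where the real labour lies, and it is precisely this cascade of cancellations that pins the coefficients of the recurrence to the specific matrices $B_N$, $C_N$ and $D_N$.

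Finally I would handle the bottom of the recurrence separately. Because no death occurs at population size $2$ — the death rates vanish by \eqref{deathrates2} and the generator \eqref{formuleL} reduces to its birth part there — the relation on the size-$3$ slice sees a modified, death-free contribution from the size-$2$ states rather than a genuine $D_3z_2$ term. Propagating the ansatz through the modified generator at $N=2,3$, together with the boundary values $v(2,0,0)=v(0,0,2)=0$, produces the initial relation $B_3z_4=\tilde C_3z_3+\tilde f_3$ of \eqref{CIz}, with $\tilde C_3$ differing from $C_3$ by exactly the indicated rank-one correction. To conclude, the sublinearity of $v$ from Proposition \ref{vSL} transfers to subpolynomial growth of $(x_N,y_N)$, so the sequence $(z_N)$ extracted from $v$ is a subpolynomial solution of \eqref{equationz}--\eqref{CIz}; conversely, uniqueness of the subpolynomial solution of that system — to be established in the remainder of Section \ref{sectionpreuve} — together with the uniqueness of the sublinear $v$ guaranteed by Proposition \ref{vSL}, identifies the constructed profile with $v$ and finishes the proof.
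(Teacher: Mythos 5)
Your reduction of the Dirichlet problem \eqref{Deltavmin} to the recurrence \eqref{equationz}--\eqref{CIz} is the right first step and matches the paper's, which records it as a lemma obtained by ``straightforward calculations'' (together with item (ii): the profile is sublinear iff $(z_N)$ is bounded, and one extra compatibility relation $x_2+\frac{3}{2}y_2=\frac{4}{3}x_3+2y_3$ coming from the $N=2$ slice, which you omit). The genuine gap is in your concluding paragraph. To invoke the uniqueness statement of Proposition \ref{vSL} you must first exhibit a sublinear solution of \eqref{Deltavmin} of the proposed form, i.e.\ you must prove that the recurrence \eqref{equationz}--\eqref{CIz} admits a bounded solution; you never do this. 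Your substitute --- ``the sequence $(z_N)$ extracted from $v$ is a subpolynomial solution'' --- is circular: extracting two numbers $x_N,y_N$ per slice from $v$ and having them satisfy the recurrence presupposes that $v$ already has the two-parameter profile form on the whole slice $\{k+m+n=N\}$, which is exactly what is to be proved (the slice contains of order $N^2$ states, so the profile form does not follow from the antisymmetry $v(k,m,n)=-v(n,m,k)$ or any soft argument). Likewise, deferring to ``uniqueness of the subpolynomial solution, to be established in the remainder of Section \ref{sectionpreuve}'' mislocates the difficulty: uniqueness is the easy half (it follows from Proposition \ref{vSL} through your own equivalence), whereas the remainder of Section \ref{sectionpreuve} is devoted to \emph{existence}; since $z_3$ determines $z_N$ for all $N$ recursively, existence amounts to finding the one initial condition $z_3$ for which the sequence stays bounded.

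That existence proof is the bulk of the paper's argument and uses ideas absent from your proposal: the two-step recurrence is rewritten as a one-step relation $B_Nz_{N+1}=(C_N+K_N)z_N+\sum_{k=3}^N(-1)^{N-k}E(k,N)f_k$ via matrices $K_N$ defined by a Riccati-type iteration; invertibility of $C_N+K_N$ is obtained from a norm estimate on $G_N=V_N+K_N/b$ valid only when $b\leqslant c/24$; the asymptotics $\|M_N^{-1}\|\leqslant 2b/(cN)$ and $g_N=C+C'/N+o(1/N)$ then make the series $z=\sum_l P_l^{-1}g_l$ converge, and $z_3=-z$ is the required initial condition. Finally the restriction to small $b$ is removed by proving that $v(k,m,n)$ is analytic in $b$ on $\mathbb{R}^{+*}$ and extending the resulting identity by analytic continuation (Theorem \ref{vanalytique} and Corollary \ref{corollairefinal}). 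Without some version of these steps your argument does not establish the proposition.
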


Note here that $v(k,m,n)=-v(n,m,k)$ and that the comparison between the proportions of genotypes $AA$ and $aa$ play a particular role in the value and sign of $v$.

\subsubsection{The dependence of $u$ in $\delta'$}\label{sectiondelta'}

For this section we set $\delta=0$, i.e. $a$ is a recessive allele, and deleterious when $\delta'>0$. As in the previous section (Proposition \ref{vSL}) $u^{0,.}_{k,m,n}:\delta'\mapsto u^{0,\delta'}$ is differentiable and $v'$ is the unique sublinear solution of the system

\begin{equation}\label{deltav'}\left\{\begin{array}{l}Lv'(k,m,n)=\frac{nY}{2N(N-1)}, \quad\quad\forall
(k,m,n)|k+m+n\geqslant2\\
v'(2,0,0)=v'(0,0,2)=0
\end{array}\right.
\end{equation}

where $Y=2k+m$ is the number of $A$ alleles in the population $(k,m,n)$.

The following proposition (proved in Subsection \ref{sectionpreuvev'}) gives a formula for $v'(k,m,n)$:

\begin{prop}\label{propositionv'}
\ben v'(k,m,n):=\frac{nY}{N}x_N+mx'_N+Y(2N-Y)\left(\frac{y'_N}{N}-\frac{Y}{2N^2}y_N\right)\een

where $x_N$ and $y_N$ are defined in Proposition \ref{propositionv}, and the sequence of vectors
$z'_N=\left(\begin{array}{c}x'_N\\y'_N\end{array}\right)$ is the unique subpolynomial solution of the following system of equations:
\begin{eqnarray}B'_Nz'_{N+1}&=&C'_Nz'_N+D'_Nz'_{N-1}+f'_N
\quad\quad\text{for all $N\geqslant3$}\label{equationz'}\\
         \tilde{B'}_2z'_3&=&\tilde{C'}_2z'_2+\tilde{f'}_2\label{CIz'},
       \end{eqnarray}
with
\ba
B'_N&:=\frac{b}{N-1}\left(\begin{array}{cc}2N^2-2N-1&\frac{-1}{N+1}\\
\frac{1}{2}&\frac{N^2+N-3/2}{N+1}\end{array}\right),\\
\tilde{B}'_2&:=\left(\begin{array}{cc}1&3\\3&\frac{13}{3}\end{array}\right),\\
C'_N&:=(bN+dN+cN(N-1))\left(\begin{array}{cc}2&0\\0&\frac{1}{N}\end{array}\right),\\
\tilde{C}'_2&:=\left(\begin{array}{cc}0&2\\2&3\end{array}\right),\\
D'_N&:=-(d+c(N-1))\left(\begin{array}{cc}2N-2&\frac{2}{N-1}\\0&\frac{N-2}{N-1}\end{array}\right), \\
f'_N&:=\left(\begin{array}{c}\frac{b}{N-1}(2N-1)\frac{y_{N+1}}{2(N+1)^2}-(d+c(N-1))(4N+2)
\frac{y_{N-1}}{2(N-1)^2}\\\\\begin{aligned}\frac{b}{N-1}
&\left(2N^3+3N^2-4N-\frac{3}{2}\right)\frac{y_{N+1}}{2(N+1)^2}\\&-(bN+dN+cN(N-1))(2N-1)\frac{y_N}{2N^2}
\\&\phantom{+bN}+(d+c(N-1))(2N^2-7N+8)
\frac{y_{N-1}}{2(N-1)^2}\end{aligned}\end{array}\right),\\
\tilde{f}'_2&:=\left(\begin{array}{c}x_2-y_2-x_3+\frac{3}{2}y_3\\\frac{19}{6}y_3-\frac{9}{4}y_2
\end{array}\right).\ea
\end{prop}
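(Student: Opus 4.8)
The plan is to follow the strategy behind Proposition~\ref{propositionv}. I would look for a solution of the Dirichlet problem~\eqref{deltav'} inside the explicit ansatz displayed in the statement, which, for each fixed total size $N=k+m+n$, is affine in $m$ and cubic in the allele count $Y=2k+m$ with $N$-dependent coefficients. The ansatz splits naturally into an \emph{unknown} part $m\,x'_N+\frac{Y(2N-Y)}{N}y'_N$ and a part $\frac{nY}{N}x_N-\frac{Y^2(2N-Y)}{2N^2}y_N$ that is already \emph{known} from Proposition~\ref{propositionv}. Since the true derivative $v'$ has already been characterized (by the path-differentiation argument used for Proposition~\ref{vSL}) as the unique sublinear solution of~\eqref{deltav'}, it suffices to: (i) reduce the equation $Lv'=\frac{nY}{2N(N-1)}$ to a recurrence for the vectors $z'_N=(x'_N,y'_N)$; (ii) produce a bounded solution of that recurrence; and (iii) observe that the corresponding ansatz function is then sublinear, so that uniqueness of the sublinear solution of~\eqref{deltav'} forces it to coincide with $v'$.

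To obtain the recurrence I would apply the generator~\eqref{formuleL} term by term to the ansatz. The birth transitions raise $N$ to $N+1$ and assemble the block $B'_Nz'_{N+1}$, the death/competition transitions lower $N$ to $N-1$ and assemble $D'_Nz'_{N-1}$, and the diagonal term gives $C'_Nz'_N$; the contributions of the known $x_N,y_N$ part, after inserting their own recurrence from Proposition~\ref{propositionv}, collapse into the explicit forcing $f'_N$ — which is exactly why $f'_N$ is built out of $y_{N-1},y_N,y_{N+1}$. Substituting $n=N-\tfrac12(Y+m)$ turns both sides into polynomials in $(Y,m)$, and matching the coefficients of the independent monomials $1,m,Y,Y^2,Y^3,mY,\dots$ yields several scalar identities for each $N$. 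As $z'_N$ has only two components, this system is over-determined, so a genuine part of the work is to verify that the ansatz has been chosen precisely so that the surplus identities are automatically consistent and everything collapses to the two-dimensional relation~\eqref{equationz'}. The equation at $N=2$ is special: by~\eqref{deathrates2} there are no death rates there, so the $D'$ block is absent and one obtains instead the modified initial relation~\eqref{CIz'} with $\tilde B'_2,\tilde C'_2$. The boundary values $v'(2,0,0)=v'(0,0,2)=0$ require no extra constraint, since at both states $m=0$ and $Y(2N-Y)=0$ while the known part also vanishes, so every term of the ansatz is zero regardless of $z'_N$.

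For uniqueness of the subpolynomial solution I would argue exactly as in Proposition~\ref{vSL}. The difference $w_N=(w^{(1)}_N,w^{(2)}_N)$ of two subpolynomial solutions produces, through the homogeneous ansatz $h(k,m,n)=m\,w^{(1)}_N+\frac{Y(2N-Y)}{N}w^{(2)}_N$, a subpolynomial solution of $Lh=0$ with $h(2,0,0)=h(0,0,2)=0$; the uniform-integrability and Doob argument (using Proposition~\ref{sup}) forces $h\equiv0$. Evaluating $h$ first at states $(k,0,n)$ with $k,n\geqslant1$ gives $w^{(2)}_N=0$, and then at any state with $m\geqslant1$ gives $w^{(1)}_N=0$, for every $N\geqslant2$. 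Existence of a \emph{bounded} solution is more delicate, and here I would reuse the reduction carried out for Proposition~\ref{propositionv} in Section~\ref{sectionpreuve}, passing from the second-order matrix recurrence~\eqref{equationz'} to a first-order (Riccati-type) one: one writes~\eqref{equationz'} as a first-order system, analyzes the two growth regimes of its transfer matrices as $N\to\infty$ (dominated by the competition terms), and selects the decaying branch, the forcing $f'_N$ remaining controlled because $y_N$ is subpolynomial by Proposition~\ref{propositionv}. This yields a bounded $z'_N$, and the initial relation~\eqref{CIz'} then fixes $z'_2$.

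I expect the main obstacle to be twofold. The first difficulty is the heavy but purely algebraic computation of $L$ applied to a cubic-in-$Y$ ansatz carrying the diploid Mendelian weights $m(m-1)/4$, $2kn$, and so on, together with the verification that the over-determined coefficient matching is consistent and reproduces exactly the matrices $B'_N,C'_N,D'_N$ and the forcing $f'_N,\tilde f'_2$. The second, and in my view the real difficulty, is establishing existence of a bounded solution of this variable-coefficient matrix recurrence: boundedness is a genuine selection principle, since it must suppress the growing mode, and making this rigorous — the content of Section~\ref{sectionpreuvev'} — requires a careful asymptotic or contraction analysis of the associated first-order recurrence.
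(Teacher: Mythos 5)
Your route is the paper's own: substitute the ansatz into the generator to turn \eqref{deltav'} into the matrix recurrence, get uniqueness from the sublinear Dirichlet uniqueness already established for \eqref{deltav'} via the argument of Proposition \ref{vSL}, and construct the bounded $z'_N$ by the same first-order reduction $K'_N=D'_N(C'_{N-1}+K'_{N-1})^{-1}B'_{N-1}$ used for $v$ in Section \ref{sectionpreuvebpetit}. One structural point you blur: that reduction only closes for $b$ small compared with $c$ (the contraction estimates on the $K'_N$ need it), and the passage to arbitrary $b$ is a separate analytic-continuation step adapted from Theorem \ref{vanalytique} and Corollary \ref{corollairefinal}, not an output of the transfer-matrix asymptotics.

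The genuine gap is in your claim that the forcing is harmless ``because $y_N$ is subpolynomial by Proposition \ref{propositionv}''. That justification would fail, and the paper identifies precisely this point as the only real difficulty in adapting the proof of Proposition \ref{propositionv}. The second entry of $f'_N$ contains the terms $-(bN+dN+cN(N-1))(2N-1)y_N/(2N^2)$ and $(d+c(N-1))(2N^2-7N+8)y_{N-1}/(2(N-1)^2)$, each of order $N\,y_N$, while the relevant row of $B'^{-1}_N$ carries a factor of order $N^2$ against a global prefactor of order $N^{-2}$. What the construction of the bounded solution actually requires is $\|B'^{-1}_Nh_N\|=O(1/N^2)$, so that the analogue of the sequence $g_N$ stays bounded and the series defining the initial condition $z'_3$ converges; this forces a cancellation between the $y_{N-1}$, $y_N$ and $y_{N+1}$ contributions to $f'_N$ beyond leading order. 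Subpolynomiality, or even the boundedness of $(z_N)$ asserted in Proposition \ref{propositionv}, gives no control on the increments $y_N-y_{N-1}$ and would leave $f'_N$ of order $N$ in the worst case, destroying the boundedness of $z'_N$. The paper gets the needed cancellation from the sharp two-term expansion $y_N=C_1/N+C_2/N^2+O(1/N^3)$, which must be extracted from the internals of the proof of Proposition \ref{propositionv} (Equations \eqref{LN-1}, \eqref{developpementg} and \eqref{zNfinal}), not from its statement. Your plan needs this quantitative input made explicit; without it, step (ii) of your outline does not go through.
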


We now prove Propositions \ref{propositionv} and \ref{propositionv'}. In both cases, the proof is shared in two parts: we first prove the result when the fecundity $b$ is small
enough compared to the competition parameter $c$, and then we
generalize the result to all possible demographic parameters $b$, $d$, and $c$.

\section{Proofs of Propositions \ref{propositionv} and \ref{propositionv'}}\label{sectionpreuve}
\subsection{Proof of Proposition \ref{propositionv} for small $b$}\label{sectionpreuvebpetit}

To begin with, straightforward calculations give the following lemma:

\begin{lem}
\begin{description}
\item[$(i)$] If $\eqref{formulev}$ is true, then $v$ satisfies $\eqref{Deltavmin}$ if and only if
$(z_N)_{N\geqslant3}$ satisfies $\eqref{equationz}$, $\eqref{CIz}$ and $x_2+\frac{3}{2}y_2=\frac{4}{3}x_3+2y_3$.
\item[$(ii)$] $(v(k,m,n))_{(k,m,n)\in\mathbb{N}^3_{**}}$ is sublinear if and only if $(z_N)_{N\geqslant3}$ is bounded.
\end{description}
\end{lem}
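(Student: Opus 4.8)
The plan is to treat parts $(i)$ and $(ii)$ as direct consequences of substituting the ansatz $\eqref{formulev}$ into the operator $L$. For $(i)$ I would first rewrite $v=x_N\,\phi_1+y_N\,\phi_2$, where $N=k+m+n$ and $\phi_1(k,m,n)=(k-n)m/N$, $\phi_2(k,m,n)=(k-n)(N^2-(k-n)^2)/N^2$ are two ``modes'' whose coefficients depend only on $N$. Applying $L$ as defined in $\eqref{formuleL}$, the three birth terms evaluate $v$ on the slice $\{k+m+n=N+1\}$ and therefore bring in $z_{N+1}$, the three death terms bring in $z_{N-1}$, and the diagonal term contributes $z_N$; thus $(Lv)(k,m,n)$ restricted to a fixed slice $\{k+m+n=N\}$ is an explicit expression that is linear in $z_{N+1},z_N,z_{N-1}$. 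The crucial simplification is the antisymmetry $v(k,m,n)=-v(n,m,k)$ noted after Proposition \ref{propositionv}, together with the fact that in the neutral case $L$ commutes with the exchange $k\leftrightarrow n$ and that the right-hand side $\frac{m(n-k)}{2N(N-1)}$ is also odd in $j:=k-n$. This forces $(Lv)-\text{RHS}$ to be an odd polynomial in $j$ of degree at most $3$ that is affine in $m$, so all even powers of $j$ cancel automatically and only two independent shapes survive.

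I would then expand $(Lv)(k,m,n)-\frac{m(n-k)}{2N(N-1)}$ on the slice and collect it as $\alpha_N\,g_1+\beta_N\,g_2$ in two linearly independent shapes, for which one may take $g_1=m(k-n)$ and $g_2=(k-n)^3$ (equivalently $\phi_1,\phi_2$). A direct computation identifies the coefficient vector $(\alpha_N,\beta_N)^{\!\top}$ with $B_Nz_{N+1}-C_Nz_N-D_Nz_{N-1}-f_N$, the matrices being exactly those listed in Proposition \ref{propositionv}. Since $g_1$ and $g_2$ are linearly independent as functions on each slice (for $N$ large there are ample states $(k,m,n)$ to separate them, while the many scalar equations per slice collapse to two because of the antisymmetry and the affine-in-$m$ structure), the identity $(Lv)=\text{RHS}$ on $\{k+m+n=N\}$ holds if and only if $\alpha_N=\beta_N=0$. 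For the generic slices $N\geqslant4$ this is precisely $\eqref{equationz}$.

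The boundary slices require separate, careful treatment, and this is where I expect the \emph{main obstacle} to lie. On the slice $N=2$ the operator in $\eqref{formuleL}$ has no death part, and the only state carrying nonzero $v$ is $(1,1,0)$ (with its mirror $(0,1,1)$), where the ansatz sees $x_2,y_2$ only through the single combination $\frac12 x_2+\frac34 y_2$; writing out the equation $(Lv)(1,1,0)=-\frac14$ then yields exactly the scalar relation $x_2+\frac32 y_2=\frac43 x_3+2y_3$. On the slice $N=3$ the death terms land on $N=2$ states, so they again involve $x_2,y_2$ only through $x_2+\frac32 y_2$; substituting the scalar relation eliminates $z_2$ from the $N=3$ equation and converts $C_3z_3+D_3z_2$ into $\tilde C_3z_3$, which is the modification recorded in Proposition \ref{propositionv} (indeed $D_3z_2$ has only a second component, equal to $-\frac{d+2c}{2}(x_2+\frac32 y_2)$, which the relation turns into $-\frac{d+2c}{2}(\frac43 x_3+2y_3)$, reproducing the correction $C_3-\tilde C_3$). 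This gives $\eqref{CIz}$, and collecting the three regimes establishes $(i)$.

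For part $(ii)$, the forward implication is immediate from $\eqref{formulev}$: since $|k-n|\leqslant N$, $0\leqslant m/N\leqslant1$ and $0\leqslant (N^2-(k-n)^2)/N^2\leqslant1$, a uniform bound $|x_N|,|y_N|\leqslant M$ gives $|v(k,m,n)|\leqslant 2MN$, so $v$ is sublinear. For the converse I would recover $z_N$ from two well-chosen states on each slice: taking $m=0$ and $k-n$ close to $N/\sqrt3$ isolates $y_N$ with a $\phi_2$-coefficient of order $N$, so a bound $|v|\leqslant CN$ forces $|y_N|=O(1)$; then taking a state with $m\approx N/2$ and $k-n\approx N/2$ gives a $\phi_1$-coefficient of order $N$, and with $y_N$ already controlled one gets $|x_N|=O(1)$. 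The associated $2\times2$ linear system has determinant of order $N^2$, so its inversion yields a uniform bound for all large $N$, while the finitely many small $N$ are bounded trivially. Overall, the generic recurrence $\eqref{equationz}$ and all of $(ii)$ are routine, and the only genuine difficulty is the length and accuracy of the substitution in $(i)$ and the matching of the two boundary slices $N=2,3$.
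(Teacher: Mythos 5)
The paper offers no written proof of this lemma beyond the phrase ``straightforward calculations,'' and your outline is essentially what those calculations would be: substitute the ansatz $\eqref{formulev}$ into $L$, use the oddness in $j=k-n$ to collapse the slice-$\{k+m+n=N\}$ identity to a two-dimensional system identified with $\eqref{equationz}$, treat $N=2,3$ separately, and read off $(ii)$ from the explicit form of $v$. Your treatment of the $N=3$ slice (folding $D_3z_2$ into $\tilde C_3 z_3$ via the scalar relation) and of part $(ii)$ (both directions, choosing states with $\phi_1$- or $\phi_2$-coefficients of order $N$) is correct.

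There is, however, one concrete error at the $N=2$ boundary, which is exactly the delicate point. Computing with $\eqref{formuleL}$ at $(1,1,0)$ gives
\begin{equation*}
(Lv)(1,1,0)=b\bigl[v(2,1,0)+v(1,2,0)\bigr]-2b\,v(1,1,0)=b\Bigl[\tfrac{4}{3}x_3+2y_3-\bigl(x_2+\tfrac{3}{2}y_2\bigr)\Bigr],
\end{equation*}
so the equation $(Lv)(1,1,0)=-\tfrac14$ that you propose to ``write out'' yields $x_2+\tfrac32 y_2=\tfrac43 x_3+2y_3+\tfrac{1}{4b}$, \emph{not} the stated relation. The stated relation corresponds to $(Lv)(1,1,0)=0$. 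The resolution is that on the slice $N=2$ all death rates vanish by $\eqref{deathrates2}$, so the $\delta$-perturbation of the generator is identically zero there and the first-order equation derived from $\eqref{Deltau}$ is $(Lv)(1,1,0)=0$; the right-hand side $\tfrac{m(n-k)}{2N(N-1)}$ in $\eqref{Deltavmin}$ must be read as $0$ on that slice. You need this correction, otherwise your scalar constraint is wrong and the $N=3$ matching that depends on it fails. A second, lesser imprecision: antisymmetry under $k\leftrightarrow n$ only forces $(Lv)-\mathrm{RHS}$ to be odd in $j$; it does not by itself cap the degree at $3$ nor make the residual affine in $m$ (individual birth terms contain $j^5$ and $jm^2$ monomials whose cancellation is part of the computation), and $\mathrm{span}\{m(k-n),(k-n)^3\}$ is not the same plane as $\mathrm{span}\{\phi_1,\phi_2\}$ since $\phi_2$ carries a linear $(k-n)$ term. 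The collapse to exactly two shapes is the \emph{output} of the substitution, not a consequence of symmetry, so you must actually verify it rather than invoke it.
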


Notice that $z_2$ can not be computed; indeed $v(1,1,0)=-v(0,1,1)=\frac{1}{2}x_2+\frac{3}{4}y_2$ and $v(k,m,n)=0$ elsewhere.

\bigskip
We then only have to prove that there exists a bounded solution
$(z_N)_{N\geqslant3}$ to the system of Equations $\eqref{equationz}$ and $\eqref{CIz}$.
Notice that if $z_3$ is fixed then for all $N$, $z_N$ is fixed, recursively. Finding a
bounded solution of this system of equations is then equivalent to
finding an initial condition $z$ (necessarily unique by Proposition
\ref{vSL}) such that if $z_3=z$ then $(z_N)_{N\geqslant3}$ is
bounded.

\subsubsection{The one-order recurrence relationship satisfied by $(z_N)_N$}\label{section1drecurrence}

We change the two-order
recurrence system of Equations $\eqref{equationz}$ and $\eqref{CIz}$ into a one-order
recurrence relationship, so that we can express easily $z_N$ as a
function of $z_3$ and conversely. We easily find that $z_N$
satisfies the following recurrence relationship:
\ben \label{recsimple}
B_Nz_{N+1}=(C_N+K_N)z_N+\sum_{k=3}^{N}(-1)^{N-k}E(k,N)f_k \quad\text{for all $N\geqslant3$.}\een
More precisely, $\eqref{recsimple}$ is satisfied for $N=3$ if
$K_3=\tilde{C_3}-C_3$ and $E(3,3)=I_2$. Moreover, if it is true for a given $N\geqslant3$ then it is true
for $N+1$ as long as $K_{N+1}=D_{N+1}(C_{N}+K_{N})^{-1}B_{N}$,
$E(N+1,N+1)=I_2$ and $E(k,N+1)=D_{N+1}(C_{N}+K_{N})^{-1}E(k,N)$ for
all $k\in\mathbb{[}3,N\mathbb{]}$. Then the recurrence
relationship $\eqref{recsimple}$ is satisfied for every $N$ as soon as
we can define two sequences of matrices $(K_N)_{N\geqslant3}$ and
$(E_N)_{N\geqslant3}$ such that:

\be \left\{\begin{array}{l}K_N=D_N(C_{N-1}+K_{N-1})^{-1}B_{N-1}\quad\quad\quad\forall N\geqslant4\\
         K_3=\tilde{C_3}-C_3\\
         E(k,N)=D_N(C_{N-1}+K_{N-1})^{-1}E(k,N-1)\quad\quad\quad\forall k\in\mathbb{[}3,N-1\mathbb{]}\\
         E(k,k)=I_2 \quad \quad\quad\forall k\geqslant3
         \end{array}\right.\ee

We then have to prove recursively that $F_N:=K_N+C_N$ is
invertible for all $N\geqslant3$. We first prove it when
$c$ is large enough compared to $b$.

\subsubsection{Proof of the invertibility of $K_N+C_N$}

Let us define
\ba V_N&:=\left(\begin{array}{cc}0&\frac{1}{N}\\1&0\end{array}\right).\ea

Then $F_N=(b+d+c(N-1))V_N+K_N.$ We now define the matrix
$G_N:=V_N+\frac{1}{b}K_N$. Then
\ba F_N&=(d+c(N-1))V_N+bG_N\\&=(d+c(N-1))V_N\left(I_2+\frac{b}{d+c(N-1)}V_N^{-1}G_N\right).\ea

Using the matricial norm
$\|M\|=\sup_{i\in\{1,2\}}(|M_{i,1}|+|M_{i,2}|)$, note that $\|V_N^{-1}\|=N$.

\begin{lem} \label{FG}
If $b\leqslant\frac{c}{24}$, then $F_N$ is invertible and $\|G_N\|\leqslant 9$ for all $N\geqslant4$.
\end{lem}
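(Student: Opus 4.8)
The plan is to reduce the invertibility of $F_N$ to a uniform a priori bound on $\|G_N\|$, and then to establish that bound by induction on $N$. Starting from the factorization $F_N=(d+c(N-1))V_N\bigl(I_2+\tfrac{b}{d+c(N-1)}V_N^{-1}G_N\bigr)$ already written down, and recalling $\|V_N^{-1}\|=N$ together with the hypothesis $b\leqslant c/24$, I observe that once $\|G_N\|\leqslant 9$ is known, the matrix $W_N:=\tfrac{b}{d+c(N-1)}V_N^{-1}G_N$ satisfies $\|W_N\|\leqslant\frac{9bN}{c(N-1)}\leqslant\frac{12b}{c}\leqslant\frac12$ for every $N\geqslant4$ (using $d\geqslant0$ and $N/(N-1)\leqslant 4/3$). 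Hence $I_2+W_N$ is invertible by a Neumann series, with $\|(I_2+W_N)^{-1}\|\leqslant2$, and therefore $F_N$ is invertible. Thus the whole statement follows from the single bound $\|G_N\|\leqslant9$.

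To obtain that bound I would run an induction on $N\geqslant4$ through the recursion $G_N=V_N+\tfrac1b K_N$ with $K_N=D_NF_{N-1}^{-1}B_{N-1}$. The induction hypothesis at $N-1$ provides both the invertibility of $F_{N-1}$ (starting from $F_3=\widetilde{C_3}$, whose determinant $-\tfrac19(b+d+2c)(3b+d+2c)$ is nonzero) and the bound $\|G_{N-1}\|\leqslant9$, which by the previous paragraph gives $F_{N-1}^{-1}=\tfrac{1}{d+c(N-2)}(I_2+W_{N-1})^{-1}V_{N-1}^{-1}$ with $\|(I_2+W_{N-1})^{-1}\|\leqslant2$. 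For the inductive step ($N\geqslant5$) I would then estimate, submultiplicatively,
\[
\tfrac1b\|K_N\|\leqslant\frac{2}{b(d+c(N-2))}\,\|D_N\|\,\|V_{N-1}^{-1}B_{N-1}\|,
\]
compute $\|D_N\|=\frac{(d+c(N-1))(N^2-3N+5)}{(N-1)^2}$ and $\|V_{N-1}^{-1}B_{N-1}\|=\frac{b}{2(N-2)N}\bigl(2N^2-N-6+\tfrac5N\bigr)$ (identifying in each case the dominant row of the max-row-sum norm), and check that after the factors of $b$ cancel the resulting rational function of $N$ and $d/c$ stays below $8$ for all $N\geqslant5$ (it is largest at $N=5$, $d=0$, where it equals $10/3$, and decreases to the limit $2$). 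This yields $\|G_N\|\leqslant\|V_N\|+\tfrac1b\|K_N\|\leqslant1+\tfrac{10}{3}<9$, closing the step with room to spare.

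The genuinely delicate point is the base case $N=4$, where the generic factorization of $F_3$ is unavailable: since $F_3=\widetilde{C_3}$ carries the initial condition, $G_3$ contains the large term $\tfrac1b(d+2c)$ and the crude submultiplicative estimate above overshoots (it gives roughly $18>9$). Here I would instead compute $G_4=V_4+\tfrac1bD_4\widetilde{C_3}^{-1}B_3$ exactly, using the explicit inverse of $\widetilde{C_3}$ and exploiting the cancellations in the product $D_4\widetilde{C_3}^{-1}B_3$. A direct evaluation shows $G_4$ converges as $b\to0$ to a matrix of norm below $7$, uniformly in $d\geqslant0$, and, $G_4$ being analytic in $b$ on $0\leqslant b\leqslant c/24$ with $F_3$ staying invertible, the bound $\|G_4\|\leqslant9$ persists over the whole admissible range; the invertibility of $F_4$ then follows from the Neumann argument of the first paragraph, seeding the induction. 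The main obstacle is thus the constant-chasing in this base case: the factor $9$ and the threshold $b\leqslant c/24$ must be tuned so that the exact $N=4$ estimate fits under $9$ while leaving enough slack for both the Neumann step and the inductive estimate to close.
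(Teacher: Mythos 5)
Your overall strategy coincides with the paper's: the same factorization $F_N=(d+c(N-1))V_N(I_2+W_N)$ with a Neumann-series inversion once $\|G_N\|\leqslant9$ is known, and the same induction on $N$ through $K_N=D_NF_{N-1}^{-1}B_{N-1}$. Your inductive step is correct and in fact slightly sharper than the paper's (your exact row-sum norms $\|D_N\|=\frac{(d+c(N-1))(N^2-3N+5)}{(N-1)^2}$ and $\|V_{N-1}^{-1}B_{N-1}\|=\frac{b}{2(N-2)N}\bigl(2N^2-N-6+\tfrac5N\bigr)$ check out, and the worst case $10/3$ at $N=5$, $d=0$ is right, versus the paper's cruder bounds $\|D_{N+1}\|\leqslant d+cN$ and $\|V_N^{-1}B_N\|\leqslant3b$ leading to $1+6(1+\tfrac{c}{d+3c})\leqslant9$).

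The genuine gap is in the base case. From ``$G_4$ has norm below $7$ in the limit $b\to0$'' and ``$G_4$ is analytic in $b$'' you conclude that $\|G_4\|\leqslant9$ ``persists over the whole admissible range $0\leqslant b\leqslant c/24$.'' Analyticity only gives continuity at $b=0$, hence the bound on \emph{some} unquantified neighborhood of $0$; it says nothing about the interval up to $c/24$, and nothing in your argument ties the radius of validity to the threshold $c/24$. The statement as written is not a proof. The repair is elementary and is what the paper actually does: compute $G_4=V_4+\tfrac1bD_4\tilde{C}_3^{-1}B_3$ explicitly and observe that $b$ enters every entry only through denominators $b+d+2c$ and $b+\tfrac{d+2c}{3}$ (plus one harmless constant $\tfrac14$), so each entry is dominated in absolute value, \emph{uniformly in $b\geqslant0$}, by the expression obtained on replacing those denominators by $d+2c$ and $\tfrac{d+2c}{3}$; this yields $\|G_4\|\leqslant\frac{d+3c}{d+2c}\cdot\frac{212}{64}\leqslant\frac32\cdot\frac{212}{64}<9$ with no restriction on $b$ at all. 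You should replace the analyticity appeal by this monotonicity-in-$b$ observation (or by an explicit bound on $\partial G_4/\partial b$ over the interval); everything else in your argument then closes.
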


This result will be generalized in Subsection \ref{sectiongeneralisation} to all possible parameters $b$, $d$, and $c$.

\bigskip
\begin{proof}{(of Lemma \ref{FG})}
We prove it recursively. For $N=4$, we can compute the norm of $G_4$. Indeed we have:
\be G_4=V_4+\frac{1}{b}D_4\tilde{C}_4^{-1}B_3,\ee

which gives us:
\be
G_4=\left(\begin{array}{lr}-\frac{d+3c}{48(b+d+2c)}&\frac{1}{4}-\frac{9(d+3c)}{64(b+d+2c)}\\&\\&\\1
-\frac{d+3c}{16(b+d+2c)}&-\frac{27}{64}
\frac{(d+3c)(b+2(d+2c)+\frac{d+2c}{3})}{(b+d+2c)(b+\frac{d+2c}{3})}\\-\frac{10(d+3c)}{16(b+\frac{d+2c}{3})}
-\frac{(d+2c)(d+3c)}{8(b+d+2c)(b+\frac{d+2c}{3})}&+\frac{d+3c}{32(b+\frac{d+2c}{3})}\end{array}\right).\ee

So
\ba \|G_4\|&\leqslant
\sup\left\{\frac{d+3c}{d+2c}\left(\frac{1}{48}+\frac{1}{4}+\frac{9}{64}\right),\frac{d+3c}{d+2c}
\left(1+\frac{1}{16}+\frac{30}{16}+\frac{3}{8}+\frac{1}{64}\right)\right\}\\&=\frac{d+3c}{d+2c}\frac{212}{64}\leqslant
\frac{212}{64}\frac{3}{2}\leqslant 9.\ea

For all $N$, the
invertibility of the matrix $F_N$ is a consequence of $\|G_N\|\leqslant 9$. Indeed, if $\|G_N\|\leqslant 9$, then as long as $b<\frac{c}{12}$,
\be \left\|\frac{bV_N^{-1}G_N}{d+c(N-1)}\right\|\leqslant
\frac{9bN}{d+c(N-1)}<1.\ee

In this case,
$I_2+\frac{bV_N^{-1}G_N}{d+c(N-1)}$ is invertible, and so is $F_N$. Now let us assume that $\|G_N\|\leqslant 9$ for a given $N\geqslant4$ and
let us prove that $\|G_{N+1}\|\leqslant 9$. If $\|G_N\|\leqslant 9$, then $F_N$ is invertible and we can write $G_{N+1}=V_{N+1}+\frac{1}{b}D_{N+1}F_N^{-1}B_N.$ Hence
\be
G_{N+1}=V_{N+1}+D_{N+1}\left(I_2+\frac{bV_N^{-1}G_N}{d+c(N-1)}\right)^{-1}\frac{V_N^{-1}}{d+c(N-1)}\frac{B_N}{b}.\ee

Moreover, as long as $b\leqslant\frac{c}{24}$,
\be\left\|\left(I_2+\frac{bV_N^{-1}G_N}{d+c(N-1)}\right)^{-1}\right\|\leqslant\frac{1}{1-
\left\|\frac{bV_N^{-1}G_N}{d+c(N-1)}\right\|}\leqslant\frac{1}{1-\frac{9bN}{d+c(N-1)}}\leqslant2.\ee

Finally, for all $N\geqslant4$, $\|D_{N+1}\|\leqslant d+cN$ and $\|V_N^{-1}B_N\|\leqslant 3b$ which implies
\be \|G_{N+1}\|\leqslant1+6\left(1+\frac{c}{d+3c}\right)\leqslant
9.\ee\end{proof}

\medskip
As long as $b\leqslant c/24$, Equation $\eqref{recsimple}$ is satisfied,
which allows us to express easily $z_N$ as a function of $z_3$ for
all $N\geqslant3$. We now prove that there exists a real number $z$ such that if $z_3=z$ then $(z_N)$ is bounded.

\subsubsection{Boundedness of $z$}\label{sectionzborne}

Let us assume here that $b<c/24$, so that we can use the previous results. Setting
\be M_N:=B_N^{-1}(C_N+K_N),\quad\text{ and }\quad g_N:=\sum_{k=3}^{N}(-1)^{N-k}B_N^{-1}E(k,N)f_k,\ee

we get
\ben \label{zN} z_{N+1}=M_NM_{N-1}...M_3(z_3+\sum_{l=3}^NM_3^{-1}..M_l^{-1}g_l)=P_N\left(z_3+\sum_{l=3}^NP_l^{-1}g_l\right)\een

if $P_N=M_NM_{N-1}...M_3$. To obtain the behaviour of $(z_N)$, we then study $P_N$ and $g_N$.

\begin{lem}\label{LN}
 $\|M_N^{-1}\|\leqslant \frac{2b}{cN}$ if $N$ is large enough.
\end{lem}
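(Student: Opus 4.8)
The plan is to write $M_N^{-1}=F_N^{-1}B_N$ with $F_N=C_N+K_N$, and to reuse the factorization of $F_N$ established in the proof of Lemma~\ref{FG}. From
\be F_N=(d+c(N-1))V_N\left(I_2+\frac{b}{d+c(N-1)}V_N^{-1}G_N\right),\ee
inversion yields
\be M_N^{-1}=\left(I_2+\frac{b}{d+c(N-1)}V_N^{-1}G_N\right)^{-1}\frac{V_N^{-1}B_N}{d+c(N-1)},\ee
so that $\|M_N^{-1}\|$ factors into the norm of the inverse parenthesis, the scalar $1/(d+c(N-1))$ which carries the decay, and $\|V_N^{-1}B_N\|$. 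I would control these three pieces separately.

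For the inverse factor, I reuse the estimate from Lemma~\ref{FG}: since $b\leqslant c/24$ and $\|G_N\|\leqslant9$ (valid for $N\geqslant4$), we have $\left\|\frac{b}{d+c(N-1)}V_N^{-1}G_N\right\|\leqslant\frac{9bN}{d+c(N-1)}$, which tends to $9b/c\leqslant3/8$. The Neumann series then gives $\left\|\left(I_2+\frac{b}{d+c(N-1)}V_N^{-1}G_N\right)^{-1}\right\|\leqslant\left(1-\frac{9bN}{d+c(N-1)}\right)^{-1}$, a quantity whose limit is $(1-9b/c)^{-1}\leqslant8/5$.

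The heart of the proof is the asymptotics of $\|V_N^{-1}B_N\|$, which is where the crucial factor $1/N$ is \emph{not} present, so that the decay comes entirely from the scalar $1/(d+c(N-1))$. Multiplying out,
\be V_N^{-1}B_N=\frac{b}{2(N-1)(N+1)}\begin{pmatrix}2N^2-3&\frac{-3}{N+1}\\ N&\frac{N(2N^2+4N-3)}{N+1}\end{pmatrix},\ee
so the two absolute row sums defining $\|\cdot\|$ are $\frac{b}{2(N-1)(N+1)}\left(2N^2-3+\frac{3}{N+1}\right)$ and $\frac{b}{2(N-1)(N+1)}\left(N+\frac{N(2N^2+4N-3)}{N+1}\right)$; both converge to $b$ as $N\to\infty$, hence $\|V_N^{-1}B_N\|=b(1+o(1))$, which stays $O(1)$.

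Combining the three bounds and multiplying through by $N$,
\be N\,\|M_N^{-1}\|\leqslant\frac{N\,\|V_N^{-1}B_N\|}{(d+c(N-1))\left(1-\frac{9bN}{d+c(N-1)}\right)}\xrightarrow[N\to\infty]{}\frac{b}{c\,(1-9b/c)}\leqslant\frac{8b}{5c}<\frac{2b}{c},\ee
where I used $\frac{N}{d+c(N-1)}\to\frac1c$ together with $b\leqslant c/24$, so that $1-9b/c\geqslant5/8$. Thus $\limsup_N N\,\|M_N^{-1}\|<2b/c$, and therefore $\|M_N^{-1}\|\leqslant\frac{2b}{cN}$ for $N$ large enough. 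The only delicate point is the bookkeeping of the single factor $1/N$: it must come solely from $1/(d+c(N-1))$, while both the Neumann factor and $\|V_N^{-1}B_N\|$ remain $O(1)$; the strict margin $8/5<2$ furnishes exactly the slack needed to absorb the $o(1)$ corrections for large $N$.
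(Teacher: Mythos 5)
Your proof is correct, but it takes a genuinely different route from the paper's. The paper exploits the recursion $K_{N+1}=D_{N+1}(C_N+K_N)^{-1}B_N$ to rewrite $M_N^{-1}=(C_N+K_N)^{-1}B_N=D_{N+1}^{-1}K_{N+1}$, and then works \emph{entrywise}: using $\|K_N\|<c/2$ it derives the expansion $\eqref{equivalentK}$ of $K_{N+1}$, writes $D_{N+1}^{-1}$ explicitly, and multiplies the two to obtain the full matrix expansion $\eqref{LN-1}$, whose row sums are $\frac{b}{cN}(1+O(1/N))$, so the stated bound follows with a factor-$2$ slack. You instead stay at the level of norms: you reuse the factorization $F_N=(d+c(N-1))V_N\left(I_2+\frac{b}{d+c(N-1)}V_N^{-1}G_N\right)$ and the bound $\|G_N\|\leqslant9$ from Lemma \ref{FG}, control the inverse factor by a Neumann series (limit $\leqslant(1-9b/c)^{-1}\leqslant8/5$ under $b\leqslant c/24$, which is indeed the standing hypothesis of this subsection), and compute $\|V_N^{-1}B_N\|=b(1+o(1))$ directly; the product gives $\limsup_N N\|M_N^{-1}\|\leqslant 8b/(5c)<2b/c$. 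Your computation of $V_N^{-1}B_N$ and of its row sums is right, and note that this sharpening is genuinely needed for your route: the paper only records the cruder bound $\|V_N^{-1}B_N\|\leqslant3b$ (in the proof of Lemma \ref{FG}), which through your chain of estimates would only yield $24b/(5c)$, exceeding $2b/c$. As for what each approach buys: yours is leaner and avoids the delicate entrywise computation of $K_{N+1}$, at the price of a slightly worse asymptotic constant ($8b/(5c)$ versus the true leading order $b/(cN)$); the paper's heavier computation is not wasted, however, because the entrywise expansions $\eqref{equivalentK}$ and $\eqref{LN-1}$ are reused later — in Lemma \ref{g} via $\eqref{BNKN}$, and in Section \ref{sectionpreuvev'} to obtain the two-term expansion $y_N=C_1/N+C_2/N^2+O(1/N^3)$ — purposes for which your norm bound alone would not suffice.
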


\begin{proof}{(of Lemma \ref{LN})}  We previously proved (Lemme \ref{FG}) that for all $N\geqslant3$, $\|G_N\|\leqslant 9$, with $G_N=V_N+\frac{K_N}{b}$. Then for all $N\geqslant3$, $\|K_N\|\leqslant 10b$. So if $b<\frac{c}{24}$, we have
\ben\label{norme}\|K_N\|<c/2\een

for all $N\geqslant3$.
Besides, the equation $K_{N+1}=D_{N+1}(C_N+K_N)^{-1}B_N$ can be detailed, and using Equation $\eqref{norme}$, we obtain
that \ben\label{equivalentK}K_{N+1}=-b\left(\begin{array}{cc}\frac{1}{2N^2}+O\left(\frac{1}{N^3}\right)&\frac{1}{N}
+O\left(\frac{1}{N^3}\right)\\1
+O\left(\frac{1}{N^2}\right)&\frac{3}{N^2}+O\left(\frac{1}{N^3}\right)\end{array}\right).\een
Next, \be
D_{N+1}^{-1}=\frac{N^2}{(d+cN)(N-2)(N-1)}\left(\begin{array}{cc}\frac{3}{N}&-\frac{N-2}{N}
\\-(N-1)&0\end{array}\right).\ee

We deduce from this that
\ben\label{LN-1}M_N^{-1}=D_{N+1}^{-1}K_{N+1}=\frac{b}{c}\left(\begin{array}{cc}\frac{1}{N}+O\left(\frac{1}{N^2
}\right)&O\left(\frac{1}{N^3}\right)
\\\frac{1}{2N^2}+O\left(\frac{1}{N^3}\right)&\frac{1}{N}
+O\left(\frac{1}{N^2}\right)\end{array}\right).\een\end{proof}

\medskip
Notice that if $N$ is large enough
\ben\label{LNrem}\|M_N^{-1}M_{N+1}^{-1}\|\leqslant\frac{4b^2}{c^2N^2}.\een

Besides, we have the following lemma for $(g_N)_{N}$:

\begin{lem}\label{g} $g$ satisfies
\ben\label{developpementg}g_N=C+\frac{C'}{N}+o\left(\frac{1}{N}\right)\een
\end{lem}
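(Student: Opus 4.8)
The plan is to turn the defining double sum for $g_N$ into a first-order linear recurrence and then read off its asymptotics from a transfer-matrix product. First I would exploit the defining relation for $K_N$ in the form $D_N(C_{N-1}+K_{N-1})^{-1}=K_NB_{N-1}^{-1}$ to telescope $S_N:=\sum_{k=3}^N(-1)^{N-k}E(k,N)f_k$, obtaining $S_N=f_N-K_NB_{N-1}^{-1}S_{N-1}$. Since $g_N=B_N^{-1}S_N$ and hence $B_{N-1}^{-1}S_{N-1}=g_{N-1}$, this collapses to the clean one-step recursion
\be g_N=-\big(B_N^{-1}K_N\big)\,g_{N-1}+B_N^{-1}f_N. \ee
Everything then reduces to controlling the transfer matrix $-B_N^{-1}K_N$ and the forcing $B_N^{-1}f_N$.

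The heart of the argument, and the step I expect to be the main obstacle, is to prove
\be -B_N^{-1}K_N=I+R_N,\qquad \|R_N\|=O(1/N^2), \ee
that is, that the $1/N$ term of the transfer matrix vanishes. This is the crucial cancellation: were $-B_N^{-1}K_N=I+A/N+\cdots$ with $A\neq 0$, the products below would grow like $N^{A}$ and $g_N$ would diverge logarithmically. To check it I would expand $B_N^{-1}$ from its explicit entries—using that the inner determinant of $B_N$ (the determinant of the bracketed matrix) equals $-4(N^4+2N^3-3N^2-3N+3)/(N+1)$—to find $(B_N^{-1})_{12}=1/b+O(1/N^2)$, $(B_N^{-1})_{21}=N/b-1/b+O(1/N)$ and $(B_N^{-1})_{22}=-1/(2bN)+O(1/N^2)$, and combine these with the expansion \eqref{equivalentK} of $K_N$, in which $(K_N)_{21}=-b+O(1/N^2)$ carries no $1/N$ correction. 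The $1/N$ contributions then cancel entrywise: for instance $(B_N^{-1}K_N)_{11}=(B_N^{-1})_{12}(K_N)_{21}+O(1/N^4)=-1+O(1/N^2)$ and $(B_N^{-1}K_N)_{21}=(B_N^{-1})_{21}(K_N)_{11}+(B_N^{-1})_{22}(K_N)_{21}=-\tfrac1{2N}+\tfrac1{2N}+O(1/N^2)$. The same bookkeeping yields $R_N=\rho/N^2+O(1/N^3)$ for an explicit matrix $\rho$, and a parallel but easier computation gives $s_N:=B_N^{-1}f_N=v/N^2+O(1/N^3)$ with $v=(-1/(2b),0)^{\top}$; in particular both $R_N$ and $s_N$ are $O(1/N^2)$.

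With these two facts the recurrence is solved by discrete variation of constants. Writing $\Pi_{k,N}:=\prod_{j=k+1}^N(I+R_j)$ (so that $\Pi_{k,N}=\Pi_{N_0,N}\Pi_{N_0,k}^{-1}$) and iterating gives $g_N=\Pi_{N_0,N}\big(g_{N_0}+\sum_{k=N_0+1}^N\Pi_{N_0,k}^{-1}s_k\big)$. Because $\sum_N\|R_N\|<\infty$, every tail product $\Pi_{N,\infty}=\prod_{j>N}(I+R_j)$ converges to an invertible matrix, with $\|\Pi_{k,N}-I\|\le e^{\sum_{j>k}\|R_j\|}-1=O(1/k)$ uniformly in $N$; since also $\|s_k\|=O(1/k^2)$ is summable, the bracket converges and the limit $C:=\lim_N g_N=\Pi_{N_0,\infty}\big(g_{N_0}+\sum_{k>N_0}\Pi_{N_0,k}^{-1}s_k\big)$ exists.

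Finally, to extract the $1/N$ rate I would write $g_N-C=(\Pi_{N_0,N}-\Pi_{N_0,\infty})(g_{N_0}+\sigma)-\Pi_{N_0,\infty}\sum_{k>N}\Pi_{N_0,k}^{-1}s_k+O(1/N^2)$, where $\sigma=\sum_{k>N_0}\Pi_{N_0,k}^{-1}s_k$. In the second term, $s_k=v/k^2+O(1/k^3)$ together with $\sum_{k>N}1/k^2=1/N+O(1/N^2)$ produces a term of size $1/N$ proportional to $v$; in the first term, $\Pi_{N_0,N}-\Pi_{N_0,\infty}=\Pi_{N_0,N}(I-\Pi_{N,\infty})$ with $I-\Pi_{N,\infty}=-\sum_{j>N}R_j+O(1/N^2)=-\rho/N+o(1/N)$ produces another $1/N$ term, proportional to $\rho$. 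Summing the two leading coefficients gives $g_N=C+C'/N+o(1/N)$, which is the claim; only the existence of $C$ and $C'$ is asserted, so no closed form for them is required.
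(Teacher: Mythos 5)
Your argument is correct and follows essentially the same route as the paper: you derive the one-step recursion $g_{N+1}=-B_{N+1}^{-1}K_{N+1}\,g_N+B_{N+1}^{-1}f_{N+1}$, establish the key cancellation $-B_N^{-1}K_N=I+O(1/N^2)$ from the explicit form of $B_N^{-1}$ and the expansion \eqref{equivalentK}, and note that $B_N^{-1}f_N=O(1/N^2)$, which is exactly the content of the paper's proof. Your closing variation-of-constants analysis merely spells out the final deduction that the paper leaves implicit.
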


\begin{proof}{(of lemma \ref{g})} From $g_N:=\sum_{k=3}^{N}(-1)^{N-k}B_N^{-1}E(k,N)f_k$ we deduce
\ben \label{recurrenceg} g_{N+1}=-B_{N+1}^{-1}K_{N+1}g_N+B_{N+1}^{-1}f_{N+1}\een
Moreover,
\ben \label{BN-1}
B_N^{-1}=\frac{1}{b}\frac{2(N-1)(N+1)^2}{3+(2N^2+4N-3)(2N^2-3)}\left(\begin{array}{cc}\frac{3}{N+1}&\frac{2N^2
+4N-3}{N+1}\\2N^2-3&-1
\end{array}\right)\een

and Equation $\eqref{equivalentK}$ yields
\ben \label{BNKN}
-B_{N}^{-1}K_N=\left(\begin{array}{cc}1+O\left(\frac{1}{N^2}\right)&\frac{3}{N^2}+O\left(\frac{1}{N^3}\right)
\\O\left(\frac{1}{N^2}\right)&1+O\left(\frac{1}{N^2}\right)\end{array}\right).\een

Equations $\eqref{recurrenceg}$ and $\eqref{BN-1}$ and $\eqref{BNKN}$ give us the result.\end{proof}

\medskip
Finally, we get interested in $\sum P_l^{-1}g_l$. Let us recall that $P_l=M_lM_{l-1}...M_3$.
\be
\sum_{l=3}^N\|P_l^{-1}g_l\|\leqslant\sum_{l=3}^{N}\|P_l^{-1}\|\|g_l\|.\ee

From $\eqref{developpementg}$ and Lemma
\ref{LN}, $(g_l)_{l\geqslant3}$ is bounded and there exists a constant $C_2$ such that
$\|M_N^{-1}\|\leqslant\frac{C_2}{N}$ when $N$ is large enough. Then
$\sum_{l=3}^NP_l^{-1}g_l$ converges and we define its limit
\ben \label{z}z=\sum_{l=3}^{\infty} P_l^{-1}g_l.\een
The quantity $z$ will be the initial condition, we need to obtain a bounded solution to $\eqref{equationz}$ and $\eqref{CIz}$ as is proved now:

\begin{lem} The sequence $(z_N)_{N\geqslant3}$ satisfying $\eqref{equationz}$ and $\eqref{CIz}$, and such that $z_3=-z$ (where $z$ has been defined in $\eqref{z}$), is bounded.
\end{lem}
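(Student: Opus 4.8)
The plan is to substitute the prescribed initial condition $z_3=-z$ into the closed formula \eqref{zN} and to rewrite the result as a \emph{tail} sum, so that the slow decay of the inverse matrices supplied by Lemma \ref{LN} can be brought to bear. Recall from the discussion preceding \eqref{z} that the series $\sum_{l=3}^\infty P_l^{-1}g_l$ converges, so $z$ is well defined. Taking $z_3=-z=-\sum_{l=3}^\infty P_l^{-1}g_l$ in \eqref{zN}, the finite partial sum inside the parentheses cancels the head of this series and leaves
\[
z_{N+1}=P_N\left(z_3+\sum_{l=3}^N P_l^{-1}g_l\right)=-\,P_N\sum_{l=N+1}^\infty P_l^{-1}g_l .
\]

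The decisive point is that one must \emph{not} estimate $\|P_N\|$ and $\|P_l^{-1}\|$ separately: since $\|M_j^{-1}\|\to 0$ the norms $\|P_N\|$ grow, and such a split would diverge. Instead I would exploit the telescoping structure. For $l>N$ we have $P_l=(M_l M_{l-1}\cdots M_{N+1})P_N$, hence $P_N P_l^{-1}=M_{N+1}^{-1}M_{N+2}^{-1}\cdots M_l^{-1}$, and therefore
\[
z_{N+1}=-\sum_{l=N+1}^\infty M_{N+1}^{-1}M_{N+2}^{-1}\cdots M_l^{-1}\,g_l .
\]
Now I would bound each term. By Lemma \ref{g} the sequence $(g_l)$ is bounded, say $\|g_l\|\leqslant G$, and by Lemma \ref{LN} there is an index $N_1$ with $\|M_j^{-1}\|\leqslant \frac{2b}{cj}$ for all $j\geqslant N_1$. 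Consequently, for $N\geqslant N_1$,
\[
\|z_{N+1}\|\leqslant G\sum_{l=N+1}^\infty\;\prod_{j=N+1}^{l}\frac{2b}{cj}
= G\sum_{l=N+1}^\infty\left(\frac{2b}{c}\right)^{l-N}\frac{N!}{l!}.
\]

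The dominant term is $l=N+1$, which contributes $\frac{2bG}{c(N+1)}$, and the remaining terms decay super-geometrically, so the whole sum is bounded (indeed $\|z_{N+1}\|\to 0$). For the finitely many indices $N<N_1$ the vectors $z_N$ are finite by construction from \eqref{equationz} and \eqref{CIz}, so $(z_N)_{N\geqslant3}$ is bounded, which is the claim. The main obstacle is precisely the telescoping cancellation $P_NP_l^{-1}=M_{N+1}^{-1}\cdots M_l^{-1}$: it is what replaces the blowing-up product $\|P_N\|\,\|P_l^{-1}\|$ by a genuinely small tail of inverse factors. The remaining care lies in justifying the rearrangement into a convergent tail sum (legitimate by the absolute convergence of $\sum P_l^{-1}g_l$) and in separating off the finitely many small indices where the asymptotic estimate of Lemma \ref{LN} does not yet apply; the refined expansion \eqref{LNrem} could also be invoked to make the product bound uniform, but the first-order estimate of Lemma \ref{LN} already suffices here.
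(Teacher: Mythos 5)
Your proof is correct and follows essentially the same route as the paper: both rewrite $z_{N+1}$ as the tail sum $-\sum_{l=N+1}^{\infty}M_{N+1}^{-1}\cdots M_l^{-1}g_l$ via the telescoping identity $P_NP_l^{-1}=M_{N+1}^{-1}\cdots M_l^{-1}$ and then bound it using Lemmas \ref{g} and \ref{LN}. The only cosmetic difference is that the paper groups the inverse factors in pairs via \eqref{LNrem}, whereas you bound the full product $\prod_{j=N+1}^{l}\|M_j^{-1}\|$ directly; both yield the same $O(b/(cN))$ estimate, so your argument is a valid (and arguably cleaner) rendering of the same idea.
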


\begin{proof} From $\eqref{zN}$
\ben\begin{aligned}\label{zNfinal}z_{N+1}&=
-P_N\times\left(\sum_{l=N+1}^{\infty}P_l^{-1}g_l\right)
=-\sum_{l=N+1}^{\infty}M_{N+1}^{-1}M_{N+2}^{-1}...M_l^{-1}g_l
\\&=-M_{N+1}^{-1}g_{N+1}-M_{N+1}^{-1}M_{N+2}^{-1}\sum_{l=N+2}^{\infty}(M_{N+3}^{-1}..M_{l-2}^{-1})(M_{l-1}^{-1}M_{l}^{-1})g_l.
\end{aligned}\een
By Lemmas \ref{g} and \ref{LN} and Equation $\eqref{LNrem}$, if
$N$ is large enough, there exists a constant $C$ independent from $b$ such that
\ben\label{zN1surN} \|z_N\|\leqslant C\frac{2b}{cN}.\een\end{proof}

\medskip
Proposition \ref{propositionv} is now proved for small $b$. In the next subsection we generalize this result to any $b$.

\subsection{Generalization to all possible values of $b$}\label{sectiongeneralisation}

\begin{thm}\label{vanalytique}
For all $(k,m,n)$ such that $k+m+n\geqslant2$,
$v(k,m,n)$ is an analytic function of $b$ on
$\mathbb{R}^{+*}$.
\end{thm}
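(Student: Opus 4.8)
The plan is to fix an arbitrary $b_0\in\mathbb{R}^{+*}$ and show that $b\mapsto v(k,m,n)$ extends holomorphically to a complex disc $D(b_0,r)$; then $v(k,m,n,\cdot)$ is real-analytic at $b_0$, and since $b_0$ is arbitrary this gives analyticity on all of $\mathbb{R}^{+*}$. The whole argument rests on the path representation established in the proof of Proposition \ref{vSL}. Writing $\pi^0_{ij}(b)$ for the neutral ($\delta=0$) transition probabilities of the embedded chain, differentiating the path expansion of $u$ in $\delta$ gives
\[
v(k,m,n)=\sum_{(i_1,\dots,i_l)\in S_{(k,m,n)\to\Gamma_a}}\ \sum_{j=1}^{l-1}\pi^0_{i_1i_2}\cdots\Big(\partial_\delta\pi^\delta_{i_ji_{j+1}}\big|_{0}\Big)\cdots\pi^0_{i_{l-1}i_l}.
\]
Each $\pi^0_{ij}$ is an explicit rational function of $b$: birth numerators are proportional to $b$, death numerators are $b$-free, and every state of size $N\geqslant3$ has total rate $N(b+d+c(N-1))$ (and $2b$ when $N=2$), so these are the only denominators. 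For $r<b_0$ one has $\operatorname{Re}(b+d+c(N-1))\geqslant b_0-r+d+c(N-1)>0$ uniformly in $N$ on $D(b_0,r)$, so no denominator vanishes and every individual summand above is holomorphic on $D(b_0,r)$.

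Next I would establish two uniform estimates on $D(b_0,r)$. First, comparing moduli termwise to the real transition probabilities at $b_0$, one obtains an entrywise domination $|\pi^0_{ij}(b)|\leqslant\kappa(r)\,\pi^0_{ij}(b_0)$ with a constant $\kappa(r)=\frac{b_0+r}{b_0}\big(1+\frac{r}{b_0-r+d}\big)$ independent of the states $i,j$ and satisfying $\kappa(r)\to1$ as $r\to0$; here the factor $\frac{|b|}{b_0}$ controls birth edges (death edges contribute $1$) and $\frac{b_0+d+c(N-1)}{|b+d+c(N-1)|}$ controls the common denominator, both bounded uniformly in $N$. Second, exactly as in Proposition \ref{vSL}, $|\partial_\delta\pi^\delta_{ij}|_{0}(b)|$ is bounded by a constant $C_1=C_1(b_0,r)$ uniformly in $i,j$.

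Feeding these into the path series, and organising each path as a pre-path to some state $y$, the differentiated edge $y\to y+\epsilon$, and a post-path to $\Gamma_a$, the pre-path weights are dominated by $\kappa(r)^{l'}$ times the real probabilities $\mathbb{P}^{b_0}_{(k,m,n)}(\mathcal{Z}_{l'}=y,\mathcal{T}_\Gamma>l')$ and the post-path weights by $\mathbb{E}^{b_0}_{y+\epsilon}(\kappa(r)^{\mathcal{T}_\Gamma})$; the strong Markov property then collapses the sums to
\[
|v(k,m,n)|\ \leqslant\ C\,\mathbb{E}^{b_0}_{(k,m,n)}\!\big(\mathcal{T}_\Gamma\,\kappa(r)^{\mathcal{T}_\Gamma}\big),
\]
uniformly for $b\in D(b_0,r)$. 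Since $t\,\kappa^{t}\leqslant C_\rho(1+\rho)^{t}$ whenever $\kappa<1+\rho$, the right-hand side is finite as soon as $\mathcal{T}_\Gamma$ has a finite exponential moment of order $\kappa(r)$ under the real parameters. This is precisely what Proposition \ref{proptemps} supplies: via the stochastic domination of $\mathcal{N}$ by $\mathcal{Y}$ and the geometric-excursion argument of Proposition \ref{proptempsdarret}, there is $\rho=\rho(b_0)>0$ with $\mathbb{E}^{b_0}_{(k,m,n)}((1+\rho)^{\mathcal{T}_\Gamma})<\infty$. One then chooses $r$ small enough that $\kappa(r)<1+\rho(b_0)$, which makes the displayed bound finite and uniform on $D(b_0,r)$.

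The path series is thus a series of functions holomorphic on $D(b_0,r)$ dominated by a convergent series of constants, hence it converges uniformly there; by Weierstrass' theorem its limit $v(k,m,n,\cdot)$ is holomorphic on $D(b_0,r)$, which is the claim. The delicate points are the two uniform-in-state estimates and, above all, keeping $\kappa(r)$ below the radius of convergence $1+\rho(b_0)$ dictated by the exponential moment of $\mathcal{T}_\Gamma$: this is where Proposition \ref{proptemps} is indispensable, and why mere integrability of $\mathcal{T}_\Gamma$ (Proposition \ref{proptempsdarret}) would not be enough. Finally, once analyticity in $b$ is known, the explicit identity of Proposition \ref{propositionv}, established on the interval $(0,c/24)$, extends to all $b>0$ by the identity theorem.
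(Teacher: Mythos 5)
Your overall strategy is the paper's: represent the quantity as a path expansion, extend the transition weights to a complex neighbourhood of the real axis, dominate each complex weight by (a factor close to $1$) times the corresponding real transition probability, and pay for the per-step factor with an exponential moment of $\mathcal{T}_\Gamma$ (which, as you correctly stress, is the indispensable ingredient beyond mere integrability; in the paper it is the unnamed lemma preceding Theorem \ref{vanalytique}, built on Proposition \ref{proptemps} by an excursion decomposition). The genuine difference is \emph{which} series you complexify. The paper complexifies the undifferentiated series for $u((k,m,n),\delta,b)$ jointly in $(\delta,b)$ on sector-shaped domains $E_1^\beta\times E_2^\beta$, gets $|P^{b,\delta}_{ij}|\leqslant\sqrt{1+\beta^2}\,\pi^{b_r,\delta_r}_{ij}$, concludes joint analyticity of $u$ via Dieudonn\'e's theorem, and then reads off analyticity of $v=-\partial_\delta u|_{\delta=0}$ in $b$ for free, since a partial derivative of a jointly analytic function is analytic in the remaining variable. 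You instead complexify the already-differentiated series for $v$ on discs $D(b_0,r)$ and use Weierstrass; discs versus sectors and Weierstrass versus Dieudonn\'e are cosmetic, but working with the differentiated series is where your write-up has a soft spot.

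Concretely: in your pre-path/post-path collapse, the post-path weight $\mathbb{E}^{b_0}_{y+\epsilon}\bigl(\kappa(r)^{\mathcal{T}_\Gamma}\bigr)$ is \emph{not} bounded uniformly over the intermediate states $y$ (it is at least $\kappa^{|y|-2}$, since each step changes the population size by one), so replacing the differentiated edge by the absolute constant $C_1$ and invoking the strong Markov property does not by itself yield $C\,\mathbb{E}^{b_0}_{(k,m,n)}(\mathcal{T}_\Gamma\,\kappa^{\mathcal{T}_\Gamma})$ --- this is exactly the point where the argument of Proposition \ref{vSL} (where the post-path weight is a probability, bounded by $1$) does not transfer verbatim to the complexified setting. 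The bound you want is nevertheless correct and reachable: one checks that $\partial_\delta\pi^{\delta}_{ij}|_{\delta=0}$ vanishes whenever $\pi^0_{ij}$ does and is otherwise dominated by $C\,|\pi^{0}_{ij}(b)|$ uniformly in the states (e.g.\ for a birth edge $\partial_\delta\pi|_0=-\tfrac{m}{N(b+d+c(N-1))}\pi^0$), so the differentiated edge can be reabsorbed into the path weight and the whole sum is bounded by $C\,\mathbb{E}^{b_0}_{(k,m,n)}\bigl((\mathcal{T}_\Gamma-1)\kappa^{\mathcal{T}_\Gamma-1}\bigr)$. With that repair your proof closes; the paper's joint-analyticity route buys you the convenience of never having to estimate the differentiated series at all.
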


\begin{cor}\label{corollairefinal}
For all demographic parameters $b>0$, $d$, and $c>0$, Proposition \ref{propositionv} is true.
\end{cor}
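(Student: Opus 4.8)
The final statement to prove is Corollary \ref{corollairefinal}, which extends Proposition \ref{propositionv} from the regime $b \leqslant c/24$ (handled in Subsection \ref{sectionpreuvebpetit}) to all demographic parameters $b > 0$, $d$, $c > 0$. The plan rests on Theorem \ref{vanalytique}, which asserts that each $v(k,m,n)$ is an analytic function of $b$ on $\mathbb{R}^{+*}$, so the strategy is a standard analytic-continuation argument.

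First I would recall what has already been secured. For every fixed $(k,m,n) \in \mathbb{N}^3_{**}$, Proposition \ref{vSL} guarantees that $v(k,m,n)$ is well-defined (it is the derivative $\partial_\delta u|_{\delta=0}$) and is the unique sublinear solution of the Dirichlet system \eqref{Deltavmin}, for arbitrary parameters $b,d,c > 0$. Moreover, Subsection \ref{sectionpreuvebpetit} establishes the explicit representation \eqref{formulev} together with the recurrence characterization of $(z_N)_{N\geqslant 3}$ (Proposition \ref{propositionv}) whenever $b \leqslant c/24$. Thus I have two functions of $b$ on the open interval $(0, c/24)$: the left-hand side $v(k,m,n)$ and the right-hand side built from the sequences $x_N, y_N$, and they agree there.

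Next I would invoke Theorem \ref{vanalytique} to conclude that $b \mapsto v(k,m,n)$ is analytic on all of $\mathbb{R}^{+*}$. The candidate right-hand side $(k-n)\bigl[\tfrac{m}{N}x_N + \tfrac{N^2-(k-n)^2}{N^2} y_N\bigr]$ is, for each fixed $(k,m,n)$, a finite algebraic expression in the finitely many entries $z_3, \dots, z_N$ produced by the recurrence \eqref{equationz}--\eqref{CIz}; since the matrices $B_N, C_N, D_N$ and the initial data are themselves rational (hence analytic) in $b$ wherever the relevant matrices stay invertible, this right-hand side is also analytic in $b$ on the open set where it is defined. Two functions analytic on a connected domain containing $(0,c/24)$ that coincide on that open subinterval must coincide everywhere on the domain, by the identity theorem for real-analytic functions. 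Therefore the formula \eqref{formulev} and the recurrence characterization persist for all $b > 0$, which is exactly the content of Proposition \ref{propositionv}.

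The main obstacle is not the continuation principle itself but ensuring the hypotheses of the identity theorem are genuinely met for all $b > 0$, which is precisely why Theorem \ref{vanalytique} must be proved first and separately. The delicate point is that the recurrence-based right-hand side could a priori develop singularities where some matrix $C_N + K_N$ fails to be invertible, so one must verify that no such breakdown occurs off the small-$b$ regime; the analyticity of the genuine object $v(k,m,n)$ (Theorem \ref{vanalytique}) is what rescues this, since it forces the formula to remain finite and analytic throughout $\mathbb{R}^{+*}$ by agreement on an interval. Consequently the corollary follows immediately once Theorem \ref{vanalytique} is in hand: I would state that by analyticity and the identity theorem, the equality of $v(k,m,n)$ with \eqref{formulev}, valid on $(0,c/24)$, extends to every $b > 0$, and the same reasoning applied to Proposition \ref{propositionv'} (whose $v'$ is built from the already-continued $x_N, y_N$ together with $x'_N, y'_N$) completes the proof.
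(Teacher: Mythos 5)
Your overall strategy --- prove the formula for $b\leqslant c/24$, then extend to all $b>0$ by analyticity in $b$ (Theorem \ref{vanalytique}) and the identity theorem --- is the same as the paper's. But the object you propose to continue is the wrong one, and the way you dismiss the difficulty you yourself identify is not a valid argument. You apply the identity theorem to the equality between $v(k,m,n)$ and the recurrence-built expression $(k-n)\bigl[\tfrac{m}{N}x_N+\tfrac{N^2-(k-n)^2}{N^2}y_N\bigr]$. The identity theorem requires \emph{both} sides to be analytic on a connected open set containing the interval of agreement; it cannot ``force the formula to remain finite and analytic throughout $\mathbb{R}^{+*}$ by agreement on an interval,'' as you claim. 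The sequences $x_N,y_N$ are constructed as $z_3=-\sum_l P_l^{-1}g_l$ followed by the forward recurrence, and this construction is only shown to make sense (invertibility of the $C_N+K_N$, convergence of the series) when $b$ is small relative to $c$. Outside that regime you have no candidate right-hand side to which the identity theorem could apply, and agreement on $(0,c/24)$ gives you nothing beyond the connected component of the (unknown) domain of analyticity of that right-hand side. Moreover, Proposition \ref{propositionv} asserts that the system \eqref{equationz}--\eqref{CIz} \emph{admits} a unique subpolynomial solution for all parameters; your argument never produces such a solution for large $b$.

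The paper's proof avoids this by first \emph{inverting} \eqref{formulev}: evaluating at $(k,m,n)=(N-1,0,1)$ and $(N-1,1,0)$ gives
$y_N=\frac{N^2}{4(N-2)(N-1)}\,v(N-1,0,1)$ and
$x_N=\frac{N}{N-1}\bigl[v(N-1,1,0)-\frac{2N-1}{4(N-2)}v(N-1,0,1)\bigr]$,
so that \eqref{formulev} becomes the identity \eqref{equalityv} expressing $v(k,m,n)$ purely in terms of $v(N-1,1,0)$ and $v(N-1,0,1)$. Every term in \eqref{equalityv} is a value of $v$, hence analytic in $b$ on all of $\mathbb{R}^{+*}$ by Theorem \ref{vanalytique}, so the identity theorem legitimately extends \eqref{equalityv} from $(0,Kc)$ to $\mathbb{R}^{+*}$. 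One then \emph{defines} $x_N,y_N$ for arbitrary $b$ by the displayed inversion formulas; \eqref{formulev} holds by \eqref{equalityv}, and the Lemma of Section \ref{sectionpreuvebpetit} (part $(i)$) converts the fact that $v$ solves \eqref{Deltavmin} into the statement that $(z_N)$ solves \eqref{equationz}--\eqref{CIz}, while sublinearity of $v$ (Proposition \ref{vSL}) gives boundedness of $(z_N)$ via part $(ii)$, and uniqueness follows from the uniqueness of the sublinear $v$. This inversion step is the missing idea in your proposal.
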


\begin{proof}{(of Corollary \ref{corollairefinal}.)} From the end of Section \ref{sectionzborne},
there exists a constant $K>0$ such that if $b<Kc$, $\eqref{formulev}$ is true, which gives
\ba
y_N&=\frac{N^2}{4(N-2)(N-1)}v(N-1,0,1),\\
x_N&=\frac{N}{N-1}\left[v(N-1,1,0)-\frac{2N-1}{4(N-2)}v(N-1,0,1)\right].
\ea

As long as $b<Kc$ we then have
\ban \label{equalityv}v(k,m,n)&=\frac{m(k-n)}{N-1}\left[v(N-1,1,0)-\frac{2N-1}{4(N-2)}v(N-1,0,1)\right]\\&
+(k-n)\frac{N^2-(k-n)^2}{4(N-2)(N-1)}v(N-1,0,1).\ean

Now from Theorem \ref{vanalytique}, for all $(k,m,n)$ in $\mathbb{N}^3_{**}$,
$v(k,m,n)$ is an analytic function of $b$ on
$\mathbb{R}^{+*}$. The equality $\eqref{equalityv}$ of two
analytic functions on $]0,Kc[$ extends on
$\mathbb{R}^{+*}$.\end{proof}

\medskip
Before proving Theorem \ref{vanalytique}, we prove

\begin{lem}For every $(k,m,n)$ in $\mathbb{N}^3_{**}$, there exists a strictly positive real number $\rho$ such that
$\mathbb{E}_{k,m,n}((1+\rho)^{\mathcal{T}_{\Gamma}})<\infty.$
\end{lem}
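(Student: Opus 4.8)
The final statement claims that for every initial state $(k,m,n)\in\mathbb{N}^3_{**}$ there exists $\rho>0$ with $\mathbb{E}_{k,m,n}((1+\rho)^{\mathcal{T}_{\Gamma}})<\infty$, i.e. the reaching time of the absorbing set $\Gamma$ has a finite exponential moment. This is the discrete/state-space analogue of Proposition \ref{proptemps}, which already gives an exponential moment for the reaching time of the level $\{N=2\}$ by the dominating chain $\mathcal{Y}$. So the plan is to bootstrap from Proposition \ref{proptemps} to $\mathcal{T}_{\Gamma}$, exactly mirroring the two-stage decomposition used in Proposition \ref{proptempsdarret}.

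**Plan.** First I would split $\mathcal{T}_{\Gamma}$ at the first time $\mathcal{T}_{\{2\}}$ that the size chain $\mathcal{N}$ reaches (or returns to) $2$. Since $\mathcal{N}$ is stochastically dominated by $\mathcal{Y}$ and $\mathcal{Y}$ reaches $2$ before any genotype gets fixed, Proposition \ref{proptemps} gives a $\rho_0>0$ with $\mathbb{E}_{k,m,n}((1+\rho_0)^{\mathcal{T}_{\{2\}}})<\infty$, uniformly enough to handle the first leg. The substantive work is the second leg: once the population sits at size $2$, it must reach $\Gamma$ in a geometric number of ``excursions,'' each excursion either hitting $\Gamma$ or returning to $\{N=2\}$. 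Writing $\overline p=\sup_{k+m+n=2}\mathbb{P}_{(k,m,n)}(\mathcal{T}_{\{2\}}<\mathcal{T}_{\Gamma})<1$ (the strict inequality is exactly the path argument already used in Proposition \ref{proptempsdarret}: from any size-$2$ state there is a positive-probability direct path into $\Gamma$), and letting $\tau$ denote the excursion length from $\{N=2\}$ back to $\{N=2\}$ or into $\Gamma$, the strong Markov property at the successive returns to $\{N=2\}$ gives
\begin{equation*}
\mathbb{E}_{(k,m,n)}\left((1+\rho)^{\mathcal{T}_{\Gamma}}\right)
\leqslant \sum_{m\geqslant0}\left(\overline p\,\mathbb{E}\!\left((1+\rho)^{\tau}\right)\right)^{m}(1-\overline p)\,\mathbb{E}\!\left((1+\rho)^{\tau}\right),
\end{equation*}
where each excursion length $\tau$ is itself dominated by a return time to $\{N=2\}$ for the chain $\mathcal{Y}$, whose exponential moment near $1$ tends to $1$ as $\rho\downarrow0$ by Proposition \ref{proptemps} and dominated convergence.

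**Putting it together.** The geometric series converges precisely when $\overline p\,\mathbb{E}((1+\rho)^{\tau})<1$. Since $\overline p<1$ is fixed and $\mathbb{E}((1+\rho)^{\tau})\to1$ as $\rho\downarrow0$, I can choose $\rho$ small enough that this product is $<1$; shrinking $\rho$ further if necessary so that the first-leg factor $\mathbb{E}_{k,m,n}((1+\rho)^{\mathcal{T}_{\{2\}}})$ from Proposition \ref{proptemps} is also finite, and combining the two legs by a final application of the strong Markov property at $\mathcal{T}_{\{2\}}$, yields the finiteness of $\mathbb{E}_{k,m,n}((1+\rho)^{\mathcal{T}_{\Gamma}})$ for that $\rho$.

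**Main obstacle.** The one delicate point is uniformity in the size-$2$ state: the geometric decomposition requires $\overline p<1$ and an exponential-moment bound on excursion lengths that hold \emph{uniformly} over the finitely many relevant size-$2$ states $(1,1,0),(0,1,1),\dots$ and over the states encountered at the ends of excursions. Because there are only finitely many size-$2$ configurations and each excursion's length is dominated by the single dominating chain $\mathcal{Y}$, a common $\rho$ can be extracted by taking the infimum over this finite set; verifying this uniformity, rather than any single estimate, is where the argument must be handled with care.
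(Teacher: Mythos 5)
Your plan is essentially the paper's own proof: the paper also decomposes $\mathcal{T}_{\Gamma}$ over the successive return times $\mathcal{T}^{(l)}_2$ of the size chain to $\{N=2\}$, bounds $\mathcal{T}_{\Gamma}\mathbf{1}_{L=l}$ by $\mathcal{T}^{(l+1)}_2\mathbf{1}_{L=l}$ where $L$ is the number of such returns before absorption, and closes the geometric series by letting $\rho\downarrow0$ via Proposition \ref{proptemps}, dominated convergence, and the fact that from any size-$2$ state there is a positive-probability path into $\Gamma$. The one step you should reword is the factorization $\overline p\,\mathbb{E}\bigl((1+\rho)^{\tau}\bigr)$ in your displayed bound: for a nonnegative variable $X$ and an event $A$ one does not in general have $\mathbb{E}(X\mathbf{1}_A)\leqslant\mathbb{P}(A)\,\mathbb{E}(X)$ (here $\tau$ and the event of returning to $\{N=2\}$ before $\Gamma$ need not be negatively correlated), so you should instead work directly with $S=\max\mathbb{E}\bigl((1+\rho)^{\mathcal{T}_{\{2\}}}\mathbf{1}_{L\geqslant1}\bigr)$ as the paper does, which tends to $\mathbb{P}(L\geqslant1)\leqslant\overline p<1$ by dominated convergence and yields the same conclusion.
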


\begin{proof} We define the random number $L\in\mathbb{N}$ of return of $\mathcal{Z}$ in
$\{N=2\}$ before reaching $\Gamma$, and $\mathcal{T}^{(i)}_2$ the
$i$-th time of return of $\mathcal{Z}$ in $\{N=2\}$ ($T^{(0)}_2=0$
and $\mathcal{T}^{(1)}_2=\mathcal{T}_{\{2\}}$). \ba
\mathbb{E}_{k,m,n}((1+\rho)^{\mathcal{T}_{\Gamma}})&\leqslant\sum_{l=0}^{\infty}\mathbb{E}_{k,m,n}
\left((1+\rho)^{\mathcal{T}^{(l+1)}_2}\mathbf{1}_{L=l}\right)\quad
\text{ as } \mathcal{T}_{\Gamma}\mathbf{1}_{L=l}\leqslant
\mathcal{T}^{(l+1)}_2\mathbf{1}_{L=l}\\&
=\sum_{l=0}^{\infty}\sum_{\substack{(k',m',n')\notin\Gamma|\\k'+m'+n'=2
\text{
or}\\(k',m',n')=(k,m,n)}}\mathbb{E}_{k,m,n}\left((1+\rho)^{\mathcal{T}^{(l+1)}_{2}}
\mathbf{1}_{Z_{\mathcal{T}^{(l)}_2}=(k',m',n')}\mathbf{1}_{L=l}\right)
\\&\leqslant\!\!\!\!\!\!\!\max_{\substack{(k',m',n')\notin\Gamma|\\k'+m'+n'=2 \text{ or}\\(k',m',n')=(k,m,n)}}\!\!\!\!\!\!
\mathbb{E}_{k',m',n'}\left((1+\rho)^{\mathcal{T}_{\{2\}}}\right)
\times\sum_{l=0}^{\infty}\mathbb{E}_{k,m,n}\left(
(1+\rho)^{\mathcal{T}^{(l)}_2}\mathbf{1}_{L\geqslant l}\right),\ea
by strong Markov property in $\mathcal{T}^{(l)}_2$. We now define
$$S=\max_{\substack{(k',m',n')|\\k'+m'+n'=2 \text{
or}\\(k',m',n')=(k,m,n)}}\mathbb{E}_{k',m',n'}\left(
(1+\rho)^{\mathcal{T}_{\{2\}}}\mathbf{1}_{L\geqslant1}\right)$$ and
prove that for every $l$, $$\mathbb{E}_{k,m,n}\left(
(1+\rho)^{\mathcal{T}^{(l)}_2}\mathbf{1}_{L\geqslant
l}\right)\leqslant S^l.$$ The result is obviously true for $l=0$ and
is proved recursively for every $l$ by using strong Markov property
in $T^{(l-1)}_2$ as previously. Now from Proposition
\ref{proptemps}, for every $(k,m,n)$ there exists $\rho>0$ such that
$\mathbb{E}_{k,m,n}((1+\rho)^{\mathcal{T}_{\{2\}}})<\infty$. Then by
the Dominated Convergence Theorem,
$\mathbb{E}_{k,m,n}((1+\rho)^{\mathcal{T}_{\{2\}}}\mathbf{1}_{L\geqslant1})\underset{\rho\rightarrow0}
{\longrightarrow}\mathbb{P}_{k,m,n}(L\geqslant1)<1$. Hence there
exists $\rho_0$ such that if $\rho<\rho_0$, $S<1$ and then
$\mathbb{E}_{k,m,n}((1+\rho)^{\mathcal{T}_{\Gamma}})<\infty$.
\end{proof}

\begin{proof}{(of Theorem \ref{vanalytique})} We
need to study the dependence of the probability $u$ in the fecundity
parameter $b$, so we denote by $u((k,m,n),\delta,b)$ the fixation
probability of allele $a$ when $Z_0=(k,m,n)$ and $v((k,m,n),b)$ its
derivative with respect to $\delta$. If
$u((k,m,n),.,.)$ is an analytic function of $(b,\delta)$ on
$\mathbb{R}^{+*}\times\mathbb{R}$, then $v((k,m,n),.)$ is an
analytic function of $b$ on $\mathbb{R}^{+*}$. Now,
\be u((k,m,n),\delta,b)=\sum_{l\geqslant1}\sum_{(i_1,..i_l)\in
S_{(k,m,n)\rightarrow\Gamma_a}}
\pi^{\delta,b}_{i_1i_2}...\pi^{\delta,b}_{i_{l-1}i_l},\ee where
$\pi^{\delta,b}_{i_ki_{k+1}}$ is the transition probability from
state $i_k$ to state $i_{k+1}$ and an analytic function of
$(b,\delta)$ on $\mathbb{R}^{+*}\times\mathbb{R}$. $u$ is then the
simple limit of analytic functions on
$\mathbb{R}^{+*}\times\mathbb{R}$. By $(9.13.1)$
and $(9.13.2)$ of \citet{Dieudonne}, a sequence of analytic functions $(f_n)_{n}$
defined on an open set $S$ of $\mathbb{C}$ which converges
simply towards a function $f$ on $S$ , is proved to converge uniformly on every compact subset of $S$ as long as $\{f_n,n\in\mathbb{N}\}$ is relatively compact.
We extend the functions $\pi^{\delta,b}_{i_ki_{k+1}}$ on the open set $E_1^{\beta}\times
E_{2}^{\beta}$ where $\beta\in\mathbb{R^{+*}}$ and
\be E_{1}^{\beta}=\{z\in\mathbb{C}|Re(z)>0, |Im(z)|<\beta
Re(z)\},\ee
\be E_{2}^{\beta}=\{z\in\mathbb{C}||Re(z)|<d/2,
|Im(z)|<\beta(d-|Re(z)|+2c)\}.\ee We set $b=b_r+ib_i\in
E_{1}^{\beta}$, $\delta=\delta_r+i\delta_i\in E_{2}^{\beta}$ and denote by
$P^{b,\delta}_{(k,m,n)(k',m',n')}$ the analytic extension of
$\pi^{b,\delta}_{(k,m,n)(k',m',n')}$ on $E_1^{\beta}\times
E_{2}^{\beta}$.
For all $(b,\delta)\in E_{1}^{\beta}\times E_{2}^{\beta}$ and for
all $(k,m,n)$ and $(k',m',n')$ neighbors in $\mathbb{N}^3$:
\be \left|P^{b,\delta}_{(k,m,n)(k',m',n')}\right|\leqslant
\sqrt{1+\beta^2}\;P^{b_r,\delta_r}_{(k,m,n)(k',m',n')}=\sqrt{1+\beta^2}\;\pi^{b_r,\delta_r}_{(k,m,n)(k',m',n')}
.\ee
Indeed, let us make the computation if $(k',m',n')=(k,m-1,n)$,
\ba \left|P^{b,\delta}_{(k,m,n)(k,m-1,n)}\right|&=
\left|\frac{(d+\delta+c(N-1))m}{bN+dN+\delta m+cN(N-1)}\right|\\&\leqslant
\frac{|(d+\delta+c(N-1))m|}{Re(bN+dN+\delta m+cN(N-1))}\\&=
\frac{\sqrt{(d+\delta_r+c(N-1))^2m^2+\delta_i^2m^2}}{b_rN+dN+\delta_r m+cN(N-1)}\\&\leqslant
\frac{(d+\delta_r+c(N-1))m\left(\sqrt{1+\frac{\delta_i^2}{(d+\delta_r+c(N-1))^2}}\right)}{b_rN+dN+\delta_r m+cN(N-1)}\\&
\leqslant\frac{(d+\delta_r+c(N-1))m\sqrt{1+\beta^2}}{b_rN+dN+\delta_r m+cN(N-1)}
\quad\quad\text{since $\delta\in E_{2}^{\beta}$}\\&=\sqrt{1+\beta^2}\;P^{b_r,\delta_r}_{(k,m,n)(k,m-1,n)}
\ea
Computations are similar for other possible transitions. Then since $\sqrt{1+\beta^2}\leqslant1+\beta^2$,
\ba \sum_{l\geqslant1}^L\sum_{(i_1,..i_l)\in
S_{(k,m,n)\rightarrow\Gamma_a}}\!\!\!\!\!\!\!|P^{\delta,b}_{i_1i_2}...P^{\delta,b}_{i_{l-1}i_l}|&\leqslant
\sum_{l\geqslant1}^L(1+\beta^2)^l\!\!\!\!\!\!\sum_{(i_1,..i_l)\in
S_{(k,m,n)\rightarrow\Gamma_a}}\!\!\!\!\!\pi^{\delta_r,b_r}_{i_1i_2}...\pi^{\delta_r,b_r}_{i_{l-1}i_l}\\&
\leqslant\sum_{l\geqslant1}^L(1+\beta^2)^l
\mathbb{P}_{k,m,n}(\mathcal{T}_{\Gamma_a}=l)\\&\leqslant\mathbb{E}_{k,m,n}((1+\beta^2)^{\mathcal{T}_{\Gamma_a}}
\mathbf{1}_{\mathcal{T}_{\Gamma_a}<\infty})
\leqslant\mathbb{E}_{k,m,n}((1+\beta^2)^{\mathcal{T}_{\Gamma}})\ea
since, if $\mathcal{T}_{\Gamma_a}<\infty$,
$\mathcal{T}_{\Gamma_a}=\mathcal{T}_{\Gamma}$.\end{proof}

\medskip
In the following subsection, we establish some properties of the
derivative $v(k,m,n)$.
\subsection{Boundedness and sign of $v$}
\begin{prop}\label{propsignev}
\begin{description}
 \item[$(i)$] For all demographic parameters $b$, $d$ and $c$, $v$ is a bounded function of $(k,m,n)$.
\item[$(ii)$] $v_{k,m,n}=\mathbb{E}_{(k,m,n)}\left[\int_0^{T}Lv(Z_t)dt\right]\geqslant0$ where $T=\inf\{t,k_t=n_t \text{ or } m_t=n_t=0\}.$
\item[$(iii)$] $v(k,m,n)$ has the same sign than $k-n$.
\end{description}
\end{prop}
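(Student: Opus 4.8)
The three assertions are intertwined, and the plan is to prove $(ii)$ first and then deduce $(iii)$ and $(i)$ from it, since the probabilistic representation in $(ii)$ drives everything. The starting point is that $v$ solves the Dirichlet-type problem of Proposition~\ref{vSL}, $(Lv)(k,m,n)=\frac{m(n-k)}{2N(N-1)}$, where $L$ is the \emph{neutral} generator. I would apply Dynkin's formula to $(v(Z_{t\wedge T}))_{t\geqslant 0}$: writing $M_t=v(Z_t)-v(Z_0)-\int_0^t(Lv)(Z_s)\,ds$, optional stopping at $T$ gives $v(k,m,n)=\mathbb{E}_{(k,m,n)}[v(Z_T)]-\mathbb{E}_{(k,m,n)}\big[\int_0^T(Lv)(Z_s)\,ds\big]$. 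Two facts justify the limiting procedure: $Lv$ is uniformly bounded, since $\frac{m|k-n|}{2N(N-1)}\leqslant\frac14$, and the absorption time carries the integrability established in Propositions~\ref{proptempsdarret} and \ref{sup}, while $v$ is sublinear by Proposition~\ref{vSL}. The boundary term vanishes: at time $T$ either $k_T=n_T$, where the explicit formula of Proposition~\ref{propositionv} forces $v=(k_T-n_T)[\cdots]=0$, or $m_T=n_T=0$, i.e. $Z_T\in\Gamma_A$, where $v(k_T,0,0)=0$ by the boundary condition. Hence $\mathbb{E}[v(Z_T)]=0$ and $v(k,m,n)=\mathbb{E}_{(k,m,n)}\big[\int_0^T\frac{m_s(k_s-n_s)}{2N_s(N_s-1)}\,ds\big]$, which is the representation of $(ii)$. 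For the sign: the integer difference $k_t-n_t$ changes by steps of $0$ or $\pm1$ and starts nonnegative when $k\geqslant n$, so it cannot turn negative without first hitting $0$; thus $k_s-n_s>0$ for all $s<T$, the integrand keeps a constant sign, and $v\geqslant 0$.

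For $(iii)$, the same representation gives strict positivity: when $k>n$ the integrand is strictly positive on states reached with positive probability over a positive time before $T$ (for instance after a birth of type~$2$, which makes $m_s>0$ while still $k_s>n_s$), so $v(k,m,n)>0$ off the absorbing set $\Gamma_A$. For $k<n$ I would invoke the antisymmetry $v(k,m,n)=-v(n,m,k)$, immediate from the formula of Proposition~\ref{propositionv} because $(k-n)^2=(n-k)^2$ and $N$ is unchanged; this yields $v<0$. Hence $v$ has the sign of $k-n$.

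For the boundedness in $(i)$ I would again exploit $(ii)$ together with the neutrality of $Z$. By the computation of Proposition~\ref{casneutre}, $p(Z_t)=\frac{m_t+2n_t}{2N_t}$ is a bounded martingale, and a direct check shows $p=\frac12$ exactly on the diagonal $\{k=n\}$ and $p=0$ on $\Gamma_A$, so $T$ is the first hitting time of $\{0,\tfrac12\}$ by $p$. Since the integrand in $(ii)$ is at most $\frac14$, we get $0\leqslant v(k,m,n)\leqslant\frac14\,\mathbb{E}_{(k,m,n)}[T]$ for $k\geqslant n$ (and the case $k<n$ follows by antisymmetry), so it suffices to bound $\mathbb{E}[T]$ uniformly. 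For this I would use the quadratic variation of the martingale: $\mathbb{E}[\langle p\rangle_T]=\mathbb{E}[p_T^2]-p_0^2\leqslant\frac14$, together with a uniform lower bound $\langle p\rangle_T\geqslant\varepsilon\,T$ coming from a uniform positive lower bound $\varepsilon>0$ on the instantaneous quadratic-variation rate of $p$; granting this, $\mathbb{E}[T]\leqslant\frac{1}{4\varepsilon}$ and $v$ is bounded.

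The hard part is precisely this last uniform lower bound on the rate of $\langle p\rangle$: each event moves $p$ by an amount of order $1/N$ while competition forces total event rates of order $N^2$, which suggests a rate bounded below by a constant depending only on $b,c,d$, but making the bound uniform over all states, including the large-$N$ and near-fixation regimes where the favorable events become rare, is the genuine obstacle. As an alternative route to $(i)$, one can instead read boundedness off the explicit formula of Proposition~\ref{propositionv}, which requires upgrading the decay estimate $\|z_N\|=O(1/N)$ of Section~\ref{sectionzborne} from small $b$ to all $b$ by means of the analyticity of Theorem~\ref{vanalytique}; this is the same difficulty in a different guise, since the naive bound using only boundedness of $(z_N)$ yields sublinearity rather than boundedness of $v$.
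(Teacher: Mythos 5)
Your treatment of $(ii)$ and $(iii)$ is correct and follows the same route as the paper: Dynkin's formula stopped at $T$, vanishing of the boundary term via Proposition \ref{propositionv} and the condition on $\Gamma_A$, then the sign of the integrand (constant before $T$ because $k_t-n_t$ moves by unit steps) and the antisymmetry $v(k,m,n)=-v(n,m,k)$. Two remarks in your favour. First, you justify the passage to the limit by sublinearity of $v$ (Proposition \ref{vSL}) together with the moment bound of Proposition \ref{sup} and the integrability of $\mathcal{T}_\Gamma$, whereas the paper invokes the boundedness of $v$ from $(i)$ at this point; your variant removes that dependence (you should still say a word on converting the discrete-time bound of Proposition \ref{proptempsdarret} into $\mathbb{E}[T]<\infty$, e.g.\ via the uniform positive lower bound on the jump rates before absorption). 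Second, your signs are the correct ones: since $Lv=\frac{m(n-k)}{2N(N-1)}\leqslant0$ for $k\geqslant n$, the representation compatible with $(iii)$ is $v=-\mathbb{E}_{(k,m,n)}\bigl[\int_0^T Lv(Z_t)\,dt\bigr]$, which is what you derive; the identity as printed in the statement of $(ii)$ (and the ``Dynkin's formula'' displayed in the paper's proof) has the sign reversed.

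Part $(i)$, however, is a genuine gap in your proposal, and it is the one part where the paper does something different. Your primary route — $0\leqslant v\leqslant\frac14\,\mathbb{E}_{(k,m,n)}[T]$ plus a uniform bound on $\mathbb{E}[T]$ from $\mathbb{E}[\langle p\rangle_T]\leqslant\frac14$ and a uniform lower bound $\varepsilon>0$ on the instantaneous quadratic-variation rate of the martingale $p$ of Proposition \ref{casneutre} — cannot be completed as stated: the jumps of $p$ caused by a death of type $aa$ (resp.\ $AA$) have size $\frac{1-p}{N-1}$ (resp.\ $\frac{p}{N-1}$) and occur at rate of order $nN$ (resp.\ $kN$), so the quadratic-variation rate degenerates like $p(1-p)$ near the absorbing sets, exactly as for a Wright--Fisher diffusion; no uniform $\varepsilon$ exists, and it is in fact doubtful that $\mathbb{E}_{(k,m,n)}[T]$ is uniformly bounded over $\mathbb{N}^3_{**}$ at all (Proposition \ref{proptempsdarret} only gives a bound linear in $N$), so the inequality $v\leqslant\frac14\mathbb{E}[T]$ is too lossy — $v$ stays bounded because the integrand $\frac{m(k-n)}{2N(N-1)}$ is small precisely where $T$ is long, not because $T$ is short. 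What you call the ``alternative route'' is the paper's actual proof: $(i)$ is read directly off the closed formula \eqref{formulev} combined with the decay estimate $\|z_N\|\leqslant C\,2b/(cN)$ of \eqref{zN1surN}, extended from small $b$ to all $b$ through the analyticity of Theorem \ref{vanalytique} and the identity \eqref{equalityv} of Corollary \ref{corollairefinal}. You are right that boundedness of $(z_N)$ alone only yields sublinearity and that the $O(1/N)$ decay is the point; to complete $(i)$ you should carry out that extension (or at least cite \eqref{zN1surN} and \eqref{equalityv} explicitly) rather than leave both routes open.
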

\begin{proof} $(i)$ is a consequence of Equation $\eqref{zN1surN}$
and $(iii)$ is a consequence of $(ii)$. For $(ii)$, by Proposition
\ref{propositionv}, it suffices to prove the result when $k>n$. The
function $v$ being bounded in $(k,m,n)$ (by $(i)$), Dynkin's formula stopped at
the stopping time $T$ gives us that
\be
\mathbb{E}_{k,m,n}[v(Z_{T})]=v(k,m,n)-\mathbb{E}_{(k,m,n)}\left[\int_0^{T}Lv(Z_t)dt\right].\ee
Using that $v(Z_T)=0$ (from Proposition \ref{propositionv}), we get the result.\end{proof}

\medskip
Notice that the sign of $\delta$ is not sufficient to know whether
the allele $a$ has a larger fixation probability than a neutral
allele, or not. This property depends on the initial genetic
repartition of the population: if there are more alleles $A$ (resp.
$a$) initially, then allele $a$ has a lower fixation probability
than a neutral allele if and only if $\delta>0$ (resp. $\delta<0$).
In Section \ref{sectionmutationnelle}, we will get interested in the particular case
where the allele $a$ is a mutant appearing in the population. In
this case, at mutation time, there is only one individual with
genotype $Aa$ and no individual with genotype $aa$, then the
population starts from a state of the form $(k,1,0)$. The fixation
probability of allele $a$ is then:
\be u((k,1,0),\delta)=\frac{1}{2(k+1)}
-\delta\left(\frac{k}{k+1}x_{k+1}+\frac{k(2k+1)}{(k+1)^2}y_{k+1}\right)\\+o(\delta)\ee
\subsection{Proof of Proposition \ref{propositionv'}}\label{sectionpreuvev'}
As in computations for $v$, Proposition \ref{propositionv'} is true
if we can find a bounded sequence $(z'_N)_{N\geqslant2}$ which is
solution of $\eqref{equationz'}$ and $\eqref{CIz'}$. To prove this,
we use a similar proof as for $\delta'=0$ (Section
\ref{sectionpreuvebpetit}). Setting \ba h_k&=f'_k \quad\quad\forall
k\geqslant4\\h_3&=f'_3-D'_3\tilde{C}'^{-1}_2\tilde{f}'_2,\ea we easily
obtain that for all $N\geqslant3$:
\ben B'_Nz'_{N+1}=(C'_N+K'_N)z'_N+\sum_{k=3}^N(-1)^kE'(N,k)h_k\een
with
\ba
K'_3&=D'_3\tilde{C}'^{-1}_2\tilde{B}'_2\\K'_N&=D'_N(C'_{N-1}+K'_{N-1})^{-1}B'_{N-1}
\quad\quad\forall N\geqslant4\\E'(k,k)&=I_2\quad\quad\forall
k\geqslant3\\E'(N,k)&=D'_N(C'_{N-1}+K'_{N-1})^{-1}E'(N-1,k)
\\&=K'_NB'^{-1}_{N-1}E'(N-1,k)\quad\quad\forall N\geqslant k+1\ea
Notice here that the detailed computation of $h_3$ shows that $h_3$
does not depend on $x_2$ and $y_2$ (which are not known) but only on
$x_2+\frac{3}{2}y_2$.
The only difficulty in adapting the proof of Section
\ref{sectionpreuvebpetit} is when proving that there exists a
constant $C$ such that for all $N$,
$\|B'^{-1}_Nh_N\|\leqslant\frac{C}{N^2}$. Note that we have
\ba
B'^{-1}_N&=\frac{N-1}{b}\frac{N+1}{(2N^2-2N-1)(N^2+N-3/2)+1/2}\\&\times\left(\begin{array}{cc}\frac{N^2+N-3/2}
{N+1}&\frac{1}{N+1}
\\-\frac{1}{2}&2N^2-2N-1\end{array}\right).\ea
From Equations $\eqref{LN-1}$, $\eqref{developpementg}$ and
$\eqref{zNfinal}$,
\be
y_N=\frac{C_1}{N}+\frac{C_2}{N^2}+O\left(\frac{1}{N^3}\right).\ee
Then
\be
\left\|\left(\begin{array}{cc}\frac{N^2+N-3/2}{N+1}&\frac{1}{N+1}\\-\frac{1}{2}&2N^2-2N-1\end{array}\right)h_N
\right\|=O(1)
\quad\text{and}\quad\|B'^{-1}_Nh_N\|=O\left(\frac{1}{N^2}\right).\ee
We now know that if the birth parameter $b$ is small enough compared
to $c$, then $v'$ is effectively defined as in Formula
$\eqref{formulev'}$. To generalize this result to all possible
values of parameters $b$ and $c$, we adapt the proof of Theorem
\ref{vanalytique} and Corollary \ref{corollairefinal} to $\delta'$,
without any difficulty. Note here that for all demographic
parameters, $v'$ is a positive bounded function of $(k,m,n)$.
\subsection{Proof of the analyticity of
$u(k,m,n)$}\label{sectiondifferentiabilite} To conclude these
results, we now prove that $u((k,m,n),\delta,\delta')$ is an
analytic function of $(\delta,\delta')$ in the neighborhood of
$(0,0)$.
\begin{proof} We use analytic extension arguments as in the proof
of Theorem \ref{vanalytique}. Here $\delta$ and $\delta'$ are
complex numbers, denoted by $\delta=\delta_r+i\delta_i$ and
$\delta'=\delta'_r+i\delta'_i$. We take $(\delta,\delta')\in
(E^{\beta})^2$ with $E^{\beta}=\{z\in\mathbb{C}||Re(z)|<d/2,
|Im(z)|<\beta(d-|Re(z)|+2c)\}$, and denote by
$\pi^{\delta,\delta'}_{(k,m,n)(k',m',n')}$ the transition
probability for $Z$ from $(k,m,n)$ to one of its neighbor
$(k',m',n')$ and $P^{\delta,\delta'}_{(k,m,n)(k',m',n')}$ the
analytic continuation of $\pi^{\delta,\delta'}_{(k,m,n)(k',m',n')}$
on $(E^{\beta})^2$. Then, \ba
\left|P^{\delta,\delta'}_{(k,m,n)(k',m',n')}\right|&\leqslant
(1+\beta^2)P^{\delta_r,\delta'_r}_{(k,m,n)(k',m',n')}\\&=(1+\beta^2)
\pi^{\delta_r,\delta'_r}_{(k,m,n)(k',m',n')}.\ea Indeed, it is
proved by making the computation for all possible transitions as in
the proof of Theorem \ref{vanalytique} and the conclusion follows
similarly.\end{proof}

\medskip
Theorem \ref{maintheorem} is now proved.

\section{Mutational scale: convergence and extinction vortex}\label{sectionmutationnelle}
Understanding and quantifying the extinction risk of a population is
a very important issue, in particular within the framework of
species conservation \citet{GilpinSoule1986}. We now get interested
in a phenomenon called ``mutational meltdown'' \citet{Lynch1995}:
within small populations, inbreeding favors the fixation of
deleterious alleles that would disappear in an infinite size
population \citet{CrowKimura1970,ChampagnatMeleard2011,Metzetal1996}. This
phenomenon is then characterized by more and more frequent fixations
of deleterious alleles, which creates an extinction vortex and leads
to a rapid extinction of the population
\citet{Lande1994, GilpinSoule1986}. We wish now to observe this
acceleration of mutation fixations. To this end, we introduce
mutations in our model, and consider a different time scale.
\subsection{General model}\label{sectionmodele}
As introduced in Section \ref{sectionecologique}, each individual is
now characterized by its genotype
$x\in\mathbf{G}:=\{\{\mathcal{A},\mathcal{C},\mathcal{G},\mathcal{T}\}^{G}\}^2$. Now every DNA strand can now mutate during the individual lifetime,
at rate $\mu_K:=\mu/K$. $K$ is a scaling parameter that will go to
infinity, following a rare mutation hypothesis, which is usual
in evolutionary genetics \citet{Lande1994,Champagnat2006}. For
every $a,a'\in
\{\mathcal{A},\mathcal{C},\mathcal{G},\mathcal{T}\}^{G}$, we define
the probability $M(a,a')$ that a DNA strand $a$ mutates to $a'$
knowing that $a$ mutates. The population can then be represented at
time $t$ by
\be Z^K:t\mapsto\sum_{i=1}^{N^K_t}\delta_{x^{i,K}_t},\ee where
$N^K_t$ is the size of population $Z^K$ at time $t$ and $x^{i,K}_t$
is the genotype of the $i$-th individual in population $Z^K$ at time
$t$. $Z^K_t$ belongs to the discrete space:
$$E=\left\{\sum_{i=1}^{N}\delta_{x_{i}}, N\in\mathbb{N}, x_i\in\mathbf{G} \;\forall i\right\},$$ where $E$ is
equipped with its discrete topology and the norm
$r(\mu,\nu)=\sum_{x\in\mathbf{G}}|\mu(x)-\nu(x)|$. We denote by
$\mathbb{D}([0,\infty),E)$ the Skhorohod space of left limited right
continuous functions from $\mathbb{R}^+$ to $E$, endowed with the
Skhorohod topology. We denote by $b(x,Z)$ the birth rate of an individual with genotype $x$ in the
population $Z$, and assume that there exists a constant $\overline{C}$ such that
for every $Z$ with size $N$,
$\displaystyle{\sum_{x\in\mathbf{G}}b(x,Z)}\leqslant\overline{C}N.$
As in Section \ref{sectionecologique}, individuals can die either
naturally, or due to competition with other individuals, and when
the population size reaches $2$ we assume that no death can occur.
We denote by $d(x,Z)$ the death rate of a given individual with
genotype $x$ in the population $Z$ and assume that for every $x$,
$d(x,Z)$ is bounded below by some positive power of the population
size.  For all $K>0$ and for all real bounded mesurable
function $f$ on $E$, if $Z=\sum_{i=1}^{N}\delta_{x^{(i)}}$ with $x^{(i)}=(x^{(i)}_1,x^{(i)}_2)$, the
generator of the Markov process $Z^K$ is:
\ba
L^Kf(Z)&=\sum_{x\in\mathbf{G}}b(x,Z)(f(Z+\delta_x)-f(Z))\\&+\sum_{i=1}^Nd(x_i,Z)(f(Z-\delta_{x^{(i)}})-f(Z))\\&
+\sum_{i=1}^N\frac{\mu}{K}\sum_{y\in\{\mathcal{A},\mathcal{C},\mathcal{G},\mathcal{T}\}^{G}}M(x^{(i)}_1,y)(f(Z-\delta_{x^{(i)}}+\delta_{(y,x^{(i)}_2)})-f(Z))\\&+\sum_{i=1}^N\frac{\mu}{K}\sum_{y\in\{\mathcal{A},\mathcal{C},\mathcal{G},\mathcal{T}\}^{G}}M(x^{(i)}_2,y)(f(Z-\delta_{x^{(i)}}+\delta_{(x^{(i)}_1,y)})-f(Z)).\ea
\textbf{Notations:} When the population is monomorphic, i.e. every individual has same genotype $x$, we assume that the population follows a neutral logistic birth-and-death process as presented in Section
\ref{sectioncasneutre}, and we denote by $b(x)$, $d(x)$ and $c(x)$ the birth, and natural and competition death rates (denoted $b$, $d$, and $c$ in Section \ref{sectioncasneutre}). For all demographic parameters $b$, $d$, and $c$, we also define the stationary law $l(.,b,d,c)$ of the population size of this neutral
logistic birth-and-death process. $l$ satisfies the stationary
equations system:
\be\left\{\begin{array}{l}b(N-1)l(N-1,b,d,c)+
(d+cN)(N+1)l(N+1,b,d,c)\\ ~\\\phantom{b(N-1)l(N-1,b,d,c)} =N(b+d+c(N-1))l(N,b,d,c)\quad\forall N\geqslant3\\ \\
2bl(2,b,d,c)=3(d+2c)l(3,b,d,c).
\end{array}\right.\ee Then for all $N\geqslant2$,
\ben\label{formulel}
l(N,b,d,c):=\frac{\displaystyle{\frac{1}{N}\prod_{k=2}^{N-1}\frac{b}{d+kc}}}
{\displaystyle{\sum_{i=2}^{\infty}\frac{1}{i}\prod_{j=2}^{i-1}\frac{b}{d+jc}}}.\een
We now rescale time when $K$ goes to infinity, in order to observe mutation apparitions. More precisely, the mean time of apparition of a mutation being equal to $1/\mu_K\sim K$, we accelerate time by multiplying $t$ by $K$.

\subsection{Convergence and limiting process in the adaptive dynamics asymptotics}
\begin{thm}\label{Zconverge}
For all $0<t_1<...<t_n$, the $n$-tuple $(Z^K_{Kt_1},...,Z^K_{Kt_n})$ converges in law towards the
process $(N_{t_1}\delta_{S_{t_1}},...,N_{t_n}\delta_{S_{t_n}})$ where
\begin{description}
\item[(i)] $(S_t)_{t>0}$ is a Markov jump process that jumps from a homozygous genotype $x^{(1)}=(x_1,x_1)$
to another homozygous genotype $x^{(2)}=(x_2,x_2)$ where
$x_1$ and $x_2$ are in $\{\mathcal{A},\mathcal{C},\mathcal{G},\mathcal{T}\}^{G}$, at rate
$\tau(x^{(1)},x^{(2)})$.
\item[(ii)] \ban\label{formuletaux}\tau(x^{(1)},x^{(2)})&=2\mu
M(x_1,x_2)\\
&\times\sum_{N=2}^{\infty}Nf((N-1,1,0),x^{(1)},x^{(2)})l(N,b(x^{(1)}),d(x^{(1)}),c(x^{(1)})),
\ean where $f((k,m,n),x^{(1)},x^{(2)})$ is the
probability that, starting from $k$ individuals with genotype
$x^{(1)}$, $m$ with genotype $(x_1,x_2)$, and $n$ with genotype $x^{(2)}$, the
population gets finally monomorphic with genotype $x^{(2)}$. In the
particular case where only the natural death rate differs between individuals with genotypes $x^{(1)}$ and $x^{(2)}$, as in Equation
$\eqref{equationtaux}$, \be
f((N-1,1,0),x^{(1)},x^{(2)})=u((N-1,1,0),d(x_1,x_2)-d(x^{(1)}),d(x^{(2)})-d(x^{(1)}))\ee
where $d(x^{(1)})$, $d((x_1,x_2))$, and $d(x^{(2)})$ are the
respective natural death rates of individuals with genotype $x^{(1)}$,
$(x_1,x_2)$ and $x^{(2)}$ (the generalization of genotypes $AA$,
$Aa$, and $aa$ in Section \ref{sectioncasneutre}), and $u$ has been
studied in Section \ref{sectionfixation}.
\item[(iii)] Conditionnally to $(S_{t_1},...,S_{t_n})=(x^{(1)},...,x^{(n)})$, the random variables $N_{t_1}$, ... ,
$N_{t_n}$ are mutually independent and for all $i$, $N_{t_i}$
has law $l(.,b(x^{(i)}),d(x^{(i)}),c(x^{(i)}))$.
\end{description}
\end{thm}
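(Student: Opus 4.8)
The plan is to exploit the separation between the slow mutational time scale (order $K$) and the fast ecological time scale (order $1$) on which invasions resolve, following the strategy of \citet{Champagnat2006} adapted to the present finite diploid population. The key observation is that, after the rescaling $t\mapsto Kt$, a monomorphic resident population of genotype $x^{(1)}$ sits for a time of order $1$ at the quasi-stationary regime of the neutral logistic birth-and-death process, whose size has law $l(\cdot,b(x^{(1)}),d(x^{(1)}),c(x^{(1)}))$ (Equation \eqref{formulel}), whereas each mutation is resolved — the mutant allele either fixing or disappearing — in a time of order $1/K$, hence instantaneously in the limit. I would therefore decompose the dynamics into three alternating phases: (a) a monomorphic resident phase at quasi-stationarity, (b) a rare mutation event producing a single heterozygous individual, and (c) a fast invasion/extinction phase governed by the three-type process $Z=(k,m,n)$ of Section \ref{sectionecologique}. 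Restricting to fixed times $t_1<\dots<t_n$ rather than proving Skorohod convergence is essential here, since for finite $K$ there is a positive (but vanishing) probability of observing the process during phase (c), when it is not monomorphic.

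For phase (a), I would use the ergodicity and moment bounds of Proposition \ref{sup} together with the exponential return-time estimate of Proposition \ref{proptemps} to show that, starting from any configuration, the size process reaches its stationary law $l$ on the original time scale, i.e. instantaneously after rescaling. When the resident is monomorphic $x^{(1)}=(x_1,x_1)$ of size $N$, each of the $N$ individuals carries two copies of the strand $x_1$, each mutating to a given $x_2$ at rate $\mu/K$ with probability $M(x_1,x_2)$; the total rate of producing the heterozygous mutant $(x_1,x_2)$ is thus $2N(\mu/K)M(x_1,x_2)$, which after rescaling by $K$ gives an instantaneous rate $2N\mu M(x_1,x_2)$ when the size equals $N$. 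Averaging over the stationary size law and weighting by the fixation probability of the new allele yields precisely the jump rate \eqref{formuletaux}.

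For phase (c), the crucial point — and the place where the finiteness of the population makes the argument cleaner than in the usual large-population branching approximation — is that, conditionally on the mutant appearing when the resident has size $N$, the post-mutation state is $(N-1,1,0)$ and the three-type process is absorbed in $\Gamma=\Gamma_{x^{(1)}}\cup\Gamma_{x^{(2)}}$ in finite expected time (Proposition \ref{proptempsdarret}), the probability of absorption in $\Gamma_{x^{(2)}}$ being exactly $f((N-1,1,0),x^{(1)},x^{(2)})$, equal to $u((N-1,1,0),d((x_1,x_2))-d(x^{(1)}),d(x^{(2)})-d(x^{(1)}))$ in the death-rate-only case of \eqref{equationtaux}. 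I would then show that two mutations do not coexist in the limit: since the absorption time has an exponential moment (by the lemma bounding $\mathbb{E}_{k,m,n}((1+\rho)^{\mathcal{T}_{\Gamma}})$), the probability that a second mutation arrives — at rate $O(1/K)$ per individual — before the current one is resolved tends to $0$, so failed invasions simply return the population to the monomorphic state $x^{(1)}$ without distorting the subsequent dynamics.

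Combining these facts, the successive monomorphic genotypes form, in the limit, the Markov jump process $S$ with rates $\tau$. It remains to prove (iii): conditionally on $(S_{t_1},\dots,S_{t_n})=(x^{(1)},\dots,x^{(n)})$, the independence and stationarity of the sizes $N_{t_i}$ follow from the renewal structure, since after each successful jump the size process is reset and re-equilibrates to $l$ on the fast time scale, decorrelating from the past before the next observation time $t_{i+1}>t_i$ is reached. The main obstacle is the rigorous control of this time-scale separation uniformly in $K$ — namely showing that the rescaled waiting time until the next \emph{successful} mutation converges to an exponential variable of parameter $\sum_{x^{(2)}}\tau(x^{(1)},x^{(2)})$, which requires that the order-$1$ (rescaled) stream of unsuccessful mutation attempts neither accumulates nor disturbs the quasi-stationary size distribution between attempts; this is where the exponential-moment estimates of Proposition \ref{proptemps} and the absorption-time lemma, together with the uniform moment bound of Proposition \ref{sup}, are indispensable.
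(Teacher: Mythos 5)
Your proposal is correct and follows essentially the same route as the paper, which itself reduces Theorem \ref{Zconverge} to the three-lemma proof of Theorem 3.1 in \citet{ChampagnatLambert2007} (non-accumulation of mutations via Proposition \ref{sup}, the law of the first mutation time via the inhomogeneous Poisson intensity $(2\mu/K)N^K_t$ and the ergodic theorem for the size process, and the fast resolution of each invasion via the absorption-time estimates); your three phases (a)--(c) correspond exactly to those three lemmas, and you correctly identify the two points needing adaptation (mutation during life rather than at birth, and the reflecting boundary at size $2$).
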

At this mutational time scale, the process $(N_t\delta_{S_t})_{t\geqslant0}$ describes the successive fixations of mutations. Indeed, a jump of the limiting process $S$ corresponds to a change in the genotype of every individual of the population, i.e. a mutation fixation. This previous theorem is directly obtained from \citet{ChampagnatLambert2007}, except from a few details in the proof, which are given in Appendix \ref{Appendicepreuve}.

\subsection{The extinction vortex}\label{sectionvortex}
In this section we focus on the jump process $S$ and assume that all mutations have the same effect
than described in Equation $\eqref{equationtaux}$, i.e. when $x_1$ mutates to $x_2$, individuals with genotypes $x^{(1)}$, $(x_1,x_2)$ and $x^{(2)}$ all have same fecundity $b$ and competition parameter $c$, but
\be d(x_1,x_2)=d(x^{(1)})+\delta, \quad \text{ and }\quad d(x^{(2)})=d(x^{(1)})+\delta'.\ee
What is more, we exclude overdominance cases by assuming that $\delta<\delta'$. We denote by
\ben\label{formuletauxd}\tau(d,\delta,\delta')=\sum_{N=2}^{\infty}Nu((N-1,1,0),d,\delta,\delta')l(N,d)\een
the jump rate of the limiting process
$S$ of Theorem \ref{Zconverge} (Equation $\eqref{formuletaux}$) when individuals have birth rate $b$,
natural death rate $d$, and competition rate $c$ (the dependence in parameters $b$ and $c$ is hidden, to simplify notations, we assumed $\mu=1/2$). This rate is also the rate of fixation of a deleterious mutation with size $(\delta,\delta')$. Let us recall that
the extinction vortex is due to more and more rapid fixations of
deleterious mutations in the population. We then wish to prove that the mean time to fixation of a deleterious mutation
decreases when the number of already fixed mutations increases. Now
when a deleterious mutation gets fixed, the natural death rate of
all individuals is increased by $\delta'$. The vortex is then due to the fact that the mean time to fixation of a deleterious mutation is a decreasing function of the natural death rate $d$ of individuals, which is proved in the next theorem.

\begin{thm} \label{tauxcroissant}
If $\delta>0$ and $\delta'>\delta$, and if $b$ is small enough, the mean
time to a jump of process $S$ $T(b,d,c,\delta,\delta')=1/\tau(b,d,c,\delta,\delta')$ is a decreasing function of $d$, the natural death rate of individuals.
\end{thm}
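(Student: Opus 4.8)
The plan is to linearize $\tau$ in $(\delta,\delta')$ with the expansion of Theorem~\ref{maintheorem}, and then to show that the resulting first-order correction decreases in $d$; since $T=1/\tau$, this is exactly $\tau$ increasing in $d$. Inserting $u((N-1,1,0),\delta,\delta')=\frac{1}{2N}-\delta v(N-1,1,0)-\delta' v'(N-1,1,0)+o(|\delta|+|\delta'|)$ into \eqref{formuletauxd} and using $N\cdot\frac{1}{2N}=\frac12$ together with $\sum_{N\geq2}l(N,d)=1$, the neutral part contributes exactly $\frac12$, independently of $d$, so that
\[\tau(d)=\tfrac12-\delta A(d)-\delta' A'(d)+o(|\delta|+|\delta'|),\]
with $A(d)=\sum_{N\geq2}N\,v(N-1,1,0,d)\,l(N,d)$ and $A'(d)=\sum_{N\geq2}N\,v'(N-1,1,0,d)\,l(N,d)$. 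Both series converge since $v$ and $v'$ are bounded (Proposition~\ref{propsignev} and Proposition~\ref{propositionv'}) and $l(\cdot,d)$ has finite first moment (Proposition~\ref{sup}); moreover $A(d)>0$ and $A'(d)>0$, because $v(N-1,1,0)>0$ (here $k-n=N-1>0$ in Proposition~\ref{propsignev}) and $v'>0$. It therefore suffices to prove that $\delta A(d)+\delta' A'(d)$ is strictly decreasing in $d$, with the remainder uniform in $d$.

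The first ingredient is the monotonicity of the stationary size law. From \eqref{formulel} the likelihood ratio
\[\frac{l(N+1,d)}{l(N,d)}=\frac{N}{N+1}\cdot\frac{b}{d+Nc}\]
is decreasing in $d$, so the family $\{l(\cdot,d)\}_d$ has monotone likelihood ratio and hence is stochastically decreasing in $d$: raising the natural death rate shifts the population size towards its minimal value $2$. Writing $A(d)=\mathbb{E}_{l(\cdot,d)}[\,N\,v(N-1,1,0,d)\,]$ and likewise for $A'$, the derivative $\partial_d A$ splits into a \emph{measure-shift} part, in which $l(\cdot,d)$ varies while the integrand is frozen, and an \emph{integrand} part, in which the integrand varies while $l$ is frozen.

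The core of the argument, and its main obstacle, is to sign these two parts. Using the closed forms of Propositions~\ref{propositionv} and~\ref{propositionv'} for $v(N-1,1,0)$ and $v'(N-1,1,0)$ in terms of $x_N,y_N,x'_N,y'_N$, together with the small-$b$ asymptotics of these sequences obtained in Section~\ref{sectionpreuve} (in particular $\|z_N\|\leq C\,\frac{2b}{cN}$ from \eqref{zN1surN} and the expansion $y_N=C_1/N+C_2/N^2+O(1/N^3)$), I would establish, for $b$ small enough, two monotonicities: \textbf{(i)} for each fixed $N$, both $v(N-1,1,0,d)$ and $v'(N-1,1,0,d)$ decrease in $d$, since $d$ enters the recurrence coefficients $C_N,D_N$ only through $d+c(N-1)$ and increasing it strengthens drift relative to the selective terms; and \textbf{(ii)} the weighted quantities $N\,v(N-1,1,0,d)$ and $N\,v'(N-1,1,0,d)$ increase in $N$. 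Granting these, the integrand part of $\partial_d A$ is nonpositive by (i), while the measure-shift part is nonpositive by (ii) combined with the stochastic decrease of $l(\cdot,d)$; the same holds for $A'$. Hence $A(d)$ and $A'(d)$ decrease in $d$, and since $\delta,\delta'>0$ so does $\delta A(d)+\delta' A'(d)$. The hypothesis $\delta'>\delta>0$ keeps the mutation deleterious throughout and excludes the overdominance regime, in which the sign of the correction could change.

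Finally I would upgrade this first-order statement to genuine monotonicity of $T$: the leading correction $\delta A(d)+\delta' A'(d)$ is strictly decreasing with $d$-derivative bounded away from $0$ on compact ranges of $d$, while the $o(|\delta|+|\delta'|)$ remainder is uniform in $d$ by the path estimates of Proposition~\ref{proptempsdarret} and the analyticity bounds of Section~\ref{sectiondifferentiabilite}; as $d\to\infty$ the population collapses to size $2$, drift dominates completely and $\tau(d)\to\frac12$, fixing the behaviour at the boundary. For $\delta,\delta'$ and $b$ small enough, $\tau$ is then strictly increasing, so $T=1/\tau$ is strictly decreasing in $d$. The delicate point is entirely (i)--(ii): proving the two monotonicities of $v,v'$ from their recurrences, and it is precisely here that the smallness of $b$ is essential, since it reduces $v,v'$ to computable leading-order expressions whose monotonicity in $d$ and in $N$ can be checked with the error terms under control.
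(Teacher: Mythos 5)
Your overall architecture matches the paper's: linearize $\tau$ via Theorem \ref{maintheorem}, split the variation in $d$ into a part where the stationary law $l(\cdot,d)$ moves with the integrand frozen and a part where the integrand moves with $l$ frozen, and conclude from a stochastic monotonicity of $l(\cdot,d)$ in $d$ (your likelihood-ratio observation is the same fact as the paper's single-crossing Lemma \ref{lemmel}, and your ``measure-shift part is nonpositive'' step is exactly the paper's trick of subtracting the constant $N_0 w((N_0-1,1,0),d')$ before pairing signs). However, there is a genuine gap at precisely the point you flag as ``delicate'': your items (i) and (ii) are asserted, not proved, and the heuristic ``increasing $d$ strengthens drift relative to the selective terms'' does not constitute an argument. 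These two monotonicities are the entire technical content of the theorem, and the paper needs two dedicated lemmas for them: the identity $L\,\partial_d w = -(L_d w)/(d+c(N-1))$ together with a sign analysis (Lemma \ref{signeLdv}) and a Dynkin-formula representation $\partial_d w = -\mathbb{E}\int_0^{T_\Gamma}(L\,\partial_d w)(Z_t)\,dt$ (Lemma \ref{LemmaLdv}).

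More importantly, your plan to establish (i) and (ii) \emph{separately} for $v$ and for $v'$ is likely to fail. The paper works throughout with the combination $w=\delta v+\delta' v'$, because only for this combination does the source term have a definite sign: one has
$Lw(k,m,n)=-\bigl((\delta'-\delta)nm+k(\delta m+2\delta' n)\bigr)/\bigl(2N(N-1)\bigr)\leqslant -C(km+mn+kn)/\bigl(2N(N-1)\bigr)$,
and this is where the hypothesis $\delta'>\delta>0$ is actually used. For $v$ alone the source is $m(n-k)/\bigl(2N(N-1)\bigr)$, which changes sign over the states visited by the process, so the generator/Dynkin route gives no sign for $\partial_d v$ at a fixed state, and there is no reason to expect $v(N-1,1,0,d)$ to be monotone in $d$ on its own. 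You correctly note that $\delta'>\delta$ excludes overdominance, but you place that hypothesis in the wrong role: it is not merely a sign convention for the correction term, it is what makes the combined source term one-signed and hence what makes (i) and (ii) provable at all. To repair the proof you should replace $A$ and $A'$ by the single quantity $\sum_N N\,w((N-1,1,0),d)\,l(N,d)$ and prove the two monotonicities for $w$, following the generator identity and the small-$b$ control of $x_N,y_N,x'_N,y'_N$ to dominate the perturbation $L_b w-\frac{b}{d+c(N-1)}L_d w$.
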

Here we underline the dependence of all quantities in $d$, by
denoting respectively by $u((k,m,n),d,\delta,\delta')$, $v((k,m,n),d)$, and
$v'((k,m,n),d)$ the fixation probability defined in Section \ref{sectionfixation} and its derivatives, when
individuals have natural death rate $d$. We also denote by $l(.,d)$
the stationary law of the population size (Equation
$\eqref{formulel}$). We first need to prove
the following lemma:
\begin{lem}\label{lemmel}
If $d$ and $d'$ are two non negative real numbers such that $d'>d$,
then there exists an integer $N_0$ such that for all $N\leqslant
N_0$, $l(N,d')\geqslant l(N,d)$, and for all $N>N_0$,
$l(N,d')<l(N,d)$.
\end{lem}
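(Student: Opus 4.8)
The plan is to recognize this as a single-crossing (monotone likelihood ratio) statement and to read it off directly from the product formula \eqref{formulel}. Fixing $b$ and $c$, write the unnormalized weights $w_N(d):=\frac{1}{N}\prod_{k=2}^{N-1}\frac{b}{d+kc}$ and the normalizer $Z(d):=\sum_{i\ge2}w_i(d)$, so that $l(N,d)=w_N(d)/Z(d)$. The quantity to analyse is the likelihood ratio
\[
R(N):=\frac{l(N,d')}{l(N,d)}=\frac{Z(d)}{Z(d')}\prod_{k=2}^{N-1}\frac{d+kc}{d'+kc},
\]
because $l(N,d')-l(N,d)=l(N,d)\,(R(N)-1)$ has the same sign as $R(N)-1$, so it suffices to show that $R$ crosses the level $1$ exactly once, from above.

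First I would prove that $R$ is strictly decreasing in $N$: passing from $N$ to $N+1$ multiplies $R$ by the single factor $\frac{d+Nc}{d'+Nc}$, which is $<1$ since $d'>d$ and $c>0$. Next I would compute the two relevant endpoints. At $N=2$ the product is empty, so $R(2)=Z(d)/Z(d')$; and since $w_N(d)$ is strictly decreasing in $d$ for every $N\ge3$ (each factor $b/(d+kc)$ decreases) while $w_2$ is constant in $d$, one gets $Z(d')<Z(d)$, i.e.\ $R(2)>1$. Finally $R(N)\to0$ as $N\to\infty$: writing $\log R(N)$ as a constant plus $\sum_{k=2}^{N-1}\log\frac{d+kc}{d'+kc}$ and using $\log\frac{d+kc}{d'+kc}\sim-\frac{d'-d}{ck}$, the divergence of the harmonic series forces the infinite product, hence $R(N)$, to $0$.

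With these three facts the conclusion is immediate. Set $N_0:=\max\{N\ge2:R(N)\ge1\}$, which is nonempty because $R(2)>1$ and finite because $R(N)\to0$. Strict monotonicity then yields $R(N)\ge1$, hence $l(N,d')\ge l(N,d)$, for every $2\le N\le N_0$, and $R(N)<1$, hence $l(N,d')<l(N,d)$, for every $N>N_0$, which is exactly the statement of the lemma.

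There is no serious obstacle here: the lemma is the classical observation that a monotone likelihood ratio produces a single crossing of the two densities. The only points requiring care are the finiteness of $Z(d)$ and $Z(d')$ (already guaranteed since $l(\cdot,d)$ and $l(\cdot,d')$ are well-defined probability laws through \eqref{formulel}) and the verification that the ratio genuinely tends to $0$ rather than to a positive limit, which is what makes the crossing real rather than one-sided; both are settled by the estimates above.
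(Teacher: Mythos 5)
Your proof is correct and follows essentially the same route as the paper: both study the ratio $l(N,d')/l(N,d)$, show it is strictly decreasing via the factor $\frac{d+Nc}{d'+Nc}<1$, and show it exceeds $1$ at $N=2$ by comparing the normalizing constants. The only (minor) difference is in the final step: the paper deduces that the ratio must eventually fall below $1$ from the fact that $l(\cdot,d)$ and $l(\cdot,d')$ are both probability measures, whereas you compute directly that the ratio tends to $0$ via the divergence of the harmonic series — both are valid.
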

\begin{proof} Let us define $q(N)=\frac{l(N,d')}{l(N,d)}$.
Equation $\eqref{formulel}$ gives us that
$q(N+1)=\frac{d+cN}{d'+cN}q(N)$, then if $d'>d$, $q(N)$ is a
strictly decreasing function of $N$. Next,
\be
q(2)=\frac{\frac{1}{2}\sum_{i=2}^{\infty}\frac{1}{i}\prod_{j=2}^{\infty}\frac{b}{d+jc}}{\frac{1}{2}\sum_{i=2}^{\infty}
\frac{1}{i}\prod_{j=2}^{\infty}\frac{b}{d'+jc}},\ee hence $q(2)>1$.
Finally, if $q(N)>1$ for all $N$ then $l(N,d')>l(N,d)$ for
all $N$ which is absurd as $l(.,d)$ and $l(.,d')$ are
probability measures. Then there exists an integer $N_0$ such that for all
$N>N_0$, $q(N)<1$ and for all $N\leqslant N_0$, $q(N)\geqslant 1$.\end{proof}

\medskip
\begin{proof}{(Theorem \ref{tauxcroissant})} From Theorem \ref{maintheorem}, the mean time to fixation of a mutation is $T(d,\delta,\delta')=1/\tau(d,\delta,\delta')$ with
\ban\label{formuletauxvv'}
\tau(d,\delta,\delta')
&=\frac{1}{2}-\left[\sum_{N=2}^{\infty}N(\delta
v((N-1,1,0),d)+\delta'v'((N-1,1,0),d))l(N,d)\right]\\&+o(|\delta|+|\delta'|)\ean
where the differentiability of the infinite sum in $\eqref{formuletauxd}$ is obtained as in the proof of Proposition \ref{vSL}. Then if $d'>d$,
\ba \tau(d',\delta,\delta')-\tau(d,\delta,\delta')&=
\sum_{N=2}^{\infty}N(\delta v((N-1,1,0),d)+\delta'
v'((N-1,1,0),d))l(N,d)\\&-\sum_{N=2}^{\infty}N(\delta
v((N-1,1,0),d')+\delta'v'((N-1,1,0),d'))l(N,d')\\&+o(|\delta|+|\delta'|)\\&
=\delta\sum_{N=2}^{\infty}Nl(N,d)(v((N-1,1,0),d)-v((N-1,1,0),d'))\\&
-\delta\sum_{N=2}^{\infty}Nv((N-1,1,0),d')(l(N,d')-l(N,d))\\&
+\delta'\sum_{N=2}^{\infty}Nl(N,d)(v'((N-1,1,0),d)-v'((N-1,1,0),d'))\\&
-\delta'\sum_{N=2}^{\infty}Nv'((N-1,1,0),d')(l(N,d')-l(N,d))\\&
+o(|\delta|+|\delta'|).\ea
Defining $N_0$ as in Lemma \ref{lemmel}, we obtain:
\ban\tau(d',\delta,\delta')&-\tau(d,\delta,\delta')=\delta
\sum_{N=2}^{\infty}Nl(N,d)(v((N-1,1,0),d)-v((N-1,1,0),d'))\\&
+\delta'\sum_{N=2}^{\infty}Nl(N,d)(v'((N-1,1,0),d)-v'((N-1,1,0),d'))\\&
-\delta\sum_{N=2}^{\infty}(Nv((N-1,1,0),d')-N_0v((N_0-1,1,0),d'))(l(N,d')-l(N,d))\\&
-\delta'\sum_{N=2}^{\infty}(Nv'((N-1,1,0),d')-N_0v'((N_0-1,1,0),d'))(l(N,d')-l(N,d))\\&
+o(|\delta|+|\delta'|) \quad\text{the added terms being equal to $0$},\ean
which gives, if $w((k,m,n),d)=\delta v((k,m,n),d)+\delta'v'((k,m,n),d)$,
\ban\label{differencetaux}\tau(d',\delta,\delta')&-\tau(d,\delta,\delta')=\sum_{N=2}^{\infty}Nl(N,d)(w((N-1,1,0),d)-w((N-1,1,0),d'))\\&
-\sum_{N=2}^{\infty}(Nw((N-1,1,0),d')-N_0w((N_0-1,1,0),d'))(l(N,d')-l(N,d))\\&+o(|\delta|+|\delta'|)\ean
Let us now prove first that $N\mapsto Nw((N-1,1,0),d')$ is increasing and then that $d\mapsto (w((N-1,1,0),d)$ is decreasing. These two results imply Theorem \ref{tauxcroissant} and will be consequences of the two following lemmas.
Notice that the infinitesimal generator $L$ (Equation $\eqref{formuleL}$) is
the sum of two generators $$(Lf)(k,m,n)=(L_bf)(k,m,n)+(L_df)(k,m,n)$$ where
\ba L_bf(Z)&=\sum_{i=1}^3b_i(Z)(f(Z+e_i)-f(Z)), \quad\text{and}\\
L_df(Z)&=(d+c(N-1))\\&\times[kf(k-1,m,n)+mf(k,m-1,n)+nf(k,m,n-1)-Nf(k,m,n)].\ea
Since $\partial Lw/\partial d=0$ (from $\eqref{Deltavmin}$ and $\eqref{deltav'}$),
\ben \label{Ldv}\left(L\frac{\partial w(.,d)}{\partial
d}\right)(k,m,n)=\frac{-(L_dw(.,d))(k,m,n)}{d+c(N-1)}.\een
Notice also that \ben \label{equationLdwN-1}(L_dw(.,d))(N-1,1,0)=(d+c(N-1))[(N-1)w(N-2,1,0,d)-Nw(N-1,1,0,d)],\een so if we prove that $(L_dw(.,d'))(N-1,1,0)\leqslant0$ for all $N\geqslant2$, then $N\mapsto Nw((N-1,1,0),d')$ is increasing. In fact we prove the
\begin{lem}\label{signeLdv} If $b$ is small enough and $\delta'>\delta$, then for all
$(k,m,n)$ in $\mathbb{N}^{3}_{**}$, $$\left(L\frac{\partial
w(.,d)}{\partial d}\right)(k,m,n)\geqslant0.$$
\end{lem}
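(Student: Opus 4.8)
The plan is to turn the statement into a sign condition on the pure-death part of the generator by means of the identity \eqref{Ldv}. Since $d+c(N-1)>0$, the inequality $\bigl(L\,\partial_d w(\cdot,d)\bigr)(k,m,n)\geqslant 0$ is equivalent to $(L_d w(\cdot,d))(k,m,n)\leqslant 0$, that is, to
\[
kw(k-1,m,n)+mw(k,m-1,n)+nw(k,m,n-1)-Nw(k,m,n)\leqslant 0
\]
for every $(k,m,n)\in\mathbb{N}^3_{**}$, where $w$ is read as $0$ on $\Gamma$ since both $v$ and $v'$ vanish there. At the states with $N=2$ one has $L_d\equiv 0$, so the inequality is a trivial equality and these finitely many states are set aside. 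Thus the entire content of the lemma is this pure-death superharmonicity of $w$.

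Next I would use linearity together with the hypothesis $\delta'>\delta>0$. Since $L_d$ is linear and $w=\delta v+\delta' v'$,
\[
L_d w=\delta\,L_d(v+v')+(\delta'-\delta)\,L_d v',
\]
and both scalar coefficients $\delta$ and $\delta'-\delta$ are nonnegative. It therefore suffices to prove the two $d$-independent inequalities $L_d v'\leqslant 0$ and $L_d(v+v')\leqslant 0$ on $\mathbb{N}^3_{**}$. This is exactly where the assumption $\delta'>\delta$ is spent: $L_d v$ need not be sign-definite on its own, and one borrows the negativity supplied by $L_d v'$; should the computation instead reveal $L_d v\geqslant 0$, the symmetric splitting $L_d w=\delta'\,L_d(v+v')-(\delta'-\delta)\,L_d v$ would be used.

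To check each inequality I would substitute the closed forms \eqref{formulev} and \eqref{formulev'} and evaluate. Because every death transition lowers $N$ by one, $L_d v'(k,m,n)$ and $L_d(v+v')(k,m,n)$ become explicit functions of the vectors $z_{N-1},z_N,z'_{N-1},z'_N$ and of $k,m,n$ (which enter only through $N$, $k-n$ and $Y=2k+m$). I would then use the two-order recurrences \eqref{equationz}--\eqref{CIz} and \eqref{equationz'}--\eqref{CIz'} to eliminate the index $N-1$ in favour of $N$ and $N+1$, the same device used to pass to the one-order relation in Section \ref{sectionzborne}. Equivalently one may write $L_d v=Lv-L_b v$ and $L_d v'=Lv'-L_b v'$, the sources $Lv=\frac{m(n-k)}{2N(N-1)}$ and $Lv'=\frac{nY}{2N(N-1)}$ being already known from \eqref{Deltavmin} and \eqref{deltav'}, so that only the birth parts $L_b v,L_b v'$ remain to be computed from the formulas.

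The concluding, and hardest, step is the sign analysis for small $b$. Here I would insert the estimates of Sections \ref{sectionzborne} and \ref{sectionpreuvev'}: the bound $\|z_N\|\leqslant 2Cb/(cN)$ from \eqref{zN1surN}, the expansion $y_N=C_1/N+C_2/N^2+O(1/N^3)$, the matrix asymptotics \eqref{LN-1}, \eqref{BN-1}, \eqref{BNKN}, and the analogous bound $\|B'^{-1}_N h_N\|=O(1/N^2)$ for the primed system. Inserting these reduces each of $L_d v'$ and $L_d(v+v')$ to a leading term, whose sign I would read from the positivity of $v'$ and from the sign rule for $v$ of Proposition \ref{propsignev}, plus a remainder of order $b^2$ (or one power of $1/N$ smaller) that is dominated once $b$ is taken small enough. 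The main obstacle is precisely this cancellation: the naive leading contributions of the weighted death increments cancel against the $-Nw$ term, so the expansions must be carried one order further while keeping the remainder bounds uniform in $N$. In parallel, the low-$N$ states and the states adjacent to $\Gamma$ — where the asymptotic regime has not yet set in and where the special initial data \eqref{CIz}, \eqref{CIz'} and the boundary identity \eqref{equationLdwN-1} come into play — must be handled separately by direct computation.
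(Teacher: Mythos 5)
Your reduction via \eqref{Ldv} to the pure-death superharmonicity $L_dw\leqslant0$, and the overall mechanism --- the Dirichlet source term supplies a negative main term, the birth contribution is an $O(b)$ perturbation controlled through the asymptotics of $(z_N)$ and $(z'_N)$, with special treatment of small-$N$ states --- are exactly the paper's. The genuine problem is the splitting $L_dw=\delta\,L_d(v+v')+(\delta'-\delta)\,L_dv'$. The two sub-claims you reduce to are not amenable to your own method: the source of $v'$ is $Lv'=-\tfrac{nY}{2N(N-1)}$ (the sign printed in \eqref{deltav'} is inconsistent with the one used in the paper's proof of this lemma; the minus sign is the correct one, as one checks from $u=p-\delta v-\delta'v'$ and $L^{\delta,\delta'}u=0$), and it vanishes identically on the whole hyperplane $\{n=0\}$, while the source of $v+v'$, namely $-\tfrac{k(m+2n)}{2N(N-1)}$, vanishes on $\{k=0\}$. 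These sets contain states with arbitrarily large $N$, far from $\Gamma$, so they cannot be dispatched as finitely many exceptional low-$N$ or boundary cases. There, $L_dv'$ (resp.\ $L_d(v+v')$) reduces to a pure $O(b)$ birth-type remainder with no negative main term to absorb it, and its sign is not determined by any of the estimates you invoke. So the step ``prove $L_dv'\leqslant0$ and $L_d(v+v')\leqslant0$ separately'' would fail, or at least cannot be carried out by the dominance argument you propose; the symmetric fallback splitting you mention suffers from the same defect.

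The paper avoids this precisely by \emph{not} decoupling $v$ from $v'$: keeping $w=\delta v+\delta'v'$ together, the combined source is $Lw=-\tfrac{(\delta'-\delta)nm+k(\delta m+2\delta'n)}{2N(N-1)}\leqslant-\tfrac{C(km+mn+kn)}{2N(N-1)}$ with $C=\min(\delta'-\delta,\delta,2\delta')>0$, which is strictly negative at every state where at least two genotypes coexist; this is the one place the hypotheses $\delta>0$ and $\delta'>\delta$ are really spent, and the resulting lower bound is what absorbs the $O(b)$ error uniformly. A second, smaller point: the error must be taken as $L_bw-\tfrac{b}{d+c(N-1)}L_dw$ rather than $L_bw$ alone --- the paper writes $Lw=\bigl(1+\tfrac{b}{d+c(N-1)}\bigr)L_dw+\bigl(L_bw-\tfrac{b}{d+c(N-1)}L_dw\bigr)$ --- because only in that pairing do the $-bNw$ parts cancel and leave differences $x_{N+1}-x_{N-1}$, $y_{N+1}-y_{N-1}$, etc., of size $O(1/N^2)$; with $L_bw$ alone the remainder is of order $b^2N$, which no fixed small $b$ dominates for large $N$. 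You correctly anticipate that a cancellation of leading terms is needed, but the specific combination of the birth operator with the rescaled death operator is the device that realizes it.
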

\begin{proof}{(Lemma \ref{signeLdv})} There exists a constant $C>0$ such that for all $(k,m,n)$ in $\mathbb{N}^{3}_{**}$,
\ba
(Lw(.,d))(k,m,n)
&=(L_dw(.,d))(k,m,n)\left(1+\frac{b}{d+c(N-1)}\right)\\&+\left((L_bw(.,d))(k,m,n)-\frac{b}{d+c(N-1)}(L_dw(.,d))(k,m,n)\right)
\\&=-\frac{\delta m(k-n)+\delta'nY}{2N(N-1)}=-\frac{(\delta'-\delta)nm+k(\delta m+2\delta'n)}{2N(N-1)}\\&\leqslant\frac{-C(km+mn+kn)}{2N(N-1)}.\ea
Next, detailed computations give us that there exists a constant $C'$ such that \ba &\left|(L_bw(.,d))(k,m,n)-\frac{b}{d+c(N-1)}(L_dw(.,d))(k,m,n)\right|\\&\leqslant \delta b\left[|k-n|\left(m|x_{N+1}-x_{N-1}|+
\frac{N^2-(k-n)^2}{N}|y_{N+1}-y_{N-1}|\right)\right]\\&
+\delta'b[Ym|x_{N+1}-x_{N-1}|+mN|x'_{N+1}-x'_{N-1}|
+(2N-Y)Y|y_{N+1}-y_{N-1}|\\&\phantom{\delta'b[Ymx}+(2N-Y)Y|y'_{N+1}-y'_{N-1}|)]\\&
+bC'\frac{km+mn+kn}{N}(|x_{N+1}|+|x_{N-1}|+|x'_{N+1}|+|x'_{N-1}|\\&\phantom{+bC'(km+mn+kn)(xN+1}+|y_{N+1}|+|y_{N-1}|+|y'_{N+1}|+|y'_{N-1}|)\ea
Finally, from Equations \eqref{zNfinal} and \eqref{zN1surN}, when $b$ is small enough, there exists a constant $C''$ independent from $b$ such that $|x_{N+1}|<\frac{C''}{N}$, and the same result is true for $y$, $x'$ and $y'$. Then if $b$ is small enough, $$\left|L_bw(k,m,n)-\frac{b}{d+c(N-1)}L_dw(k,m,n)\right|<\frac{C(km+mn+kn)}{2N(N-1)} \quad \forall(k,m,n)\in\mathbb{N}^3_{**}$$
which gives that $Ldw(k,m,n)\leqslant0$ for all $(k,m,n)$ and the result by $\eqref{Ldv}$.\end{proof}

\medskip
We finally prove that
\begin{lem}\label{LemmaLdv} If $b$ is small enough and $\delta'>\delta$, then for all $(k,m,n)$ in $\mathbb{N}^3_{**}$,
\ben\frac{\partial w((k,m,n),d)}{\partial
d}=-\mathbb{E}_{(k,m,n)} \int_0^{T_{\Gamma}}\left(L\frac{\partial
w(.,d)}{\partial d}\right)(Z_t)dt.\een
\end{lem}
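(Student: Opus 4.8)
The plan is to read the identity as a Dynkin (Feynman--Kac) representation for the function $\varphi:=\frac{\partial w(\cdot,d)}{\partial d}$ on $\mathbb{N}^3_{**}$, stopped at the hitting time $T_\Gamma$ of the absorbing set $\Gamma$. First I would record the boundary behaviour of $\varphi$: by the explicit formulas \eqref{formulev} and \eqref{formulev'}, both $v$ and $v'$ vanish on $\Gamma=\Gamma_A\cup\Gamma_a$ (on $\Gamma_A$ one has $m=n=0$, so $N^2-(k-n)^2=0$ and $Y=2N$, killing every term; on $\Gamma_a$ one has $k=m=0$, so again $N^2-(k-n)^2=0$ and $Y=0$). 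Hence $w=\delta v+\delta'v'\equiv 0$ on $\Gamma$ for every value of $d$, and differentiating in $d$ gives $\varphi\equiv 0$ on $\Gamma$. This is the clean input that makes the boundary term disappear.

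Next I would apply Dynkin's formula to $\varphi$ along $Z$ at the bounded stopping time $T_\Gamma\wedge t$, obtaining $\mathbb{E}_{(k,m,n)}[\varphi(Z_{T_\Gamma\wedge t})]=\varphi(k,m,n)+\mathbb{E}_{(k,m,n)}\int_0^{T_\Gamma\wedge t}(L\varphi)(Z_s)\,ds$, and then let $t\to\infty$. Since $T_\Gamma<\infty$ almost surely (Proposition \ref{proptempsdarret}), $Z_{T_\Gamma\wedge t}\to Z_{T_\Gamma}\in\Gamma$, so the left-hand side tends to $\mathbb{E}[\varphi(Z_{T_\Gamma})]=0$ by the boundary vanishing, provided $\varphi$ is bounded so that dominated convergence applies. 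For the right-hand side I need the integrability $\mathbb{E}_{(k,m,n)}\int_0^{T_\Gamma}|(L\varphi)(Z_s)|\,ds<\infty$; granting it, dominated convergence yields $0=\varphi(k,m,n)+\mathbb{E}_{(k,m,n)}\int_0^{T_\Gamma}(L\varphi)(Z_s)\,ds$, which is exactly the asserted formula.

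The two analytic inputs are boundedness of $\varphi$ and the integrability of $L\varphi$ along the path. For the latter, Equation \eqref{Ldv} rewrites $(L\varphi)(k,m,n)=-\big[k\,w(k-1,m,n)+m\,w(k,m-1,n)+n\,w(k,m,n-1)-N\,w(k,m,n)\big]$, so that $|L\varphi|\leqslant 2N\sup|w|$ is only sublinear in $N$; since $w$ is bounded (Proposition \ref{propsignev}$(i)$ together with the positivity and boundedness of $v'$ established at the end of Subsection \ref{sectionpreuvev'}), everything reduces to $\mathbb{E}_{(k,m,n)}\int_0^{T_\Gamma}N_t\,dt<\infty$. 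I would bound this by $\int_0^\infty \mathbb{E}[N_t^2]^{1/2}\,\mathbb{P}_{(k,m,n)}(T_\Gamma>t)^{1/2}\,dt$ via Cauchy--Schwarz, using $\sup_t\mathbb{E}[N_t^2]<\infty$ (Proposition \ref{sup}) and an exponential tail $\mathbb{P}_{(k,m,n)}(T_\Gamma>t)\leqslant C e^{-\alpha t}$, the latter coming from the exponential-moment estimate of the lemma preceding Theorem \ref{vanalytique} transferred to continuous time (the total jump rate is bounded below on $\mathbb{N}^3_{**}\setminus\Gamma$, so the continuous holding times carry uniform exponential moments).

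The main obstacle is proving that $\varphi=\partial_d w$ is bounded, since $L\varphi$ is only sublinear and one genuinely needs the boundedness of $\varphi$ both to close Dynkin's formula on the left and to avoid circularity. I would obtain it by differentiating the formulas \eqref{formulev}--\eqref{formulev'} in $d$ and showing that $\partial_d x_N,\partial_d y_N,\partial_d x'_N,\partial_d y'_N$ are all $O(1/N)$, exactly as $x_N,y_N,x'_N,y'_N$ themselves are in \eqref{zN1surN}. This is where the analyticity machinery of Theorem \ref{vanalytique} is reused: treating $d$ as an additional complex parameter and applying Cauchy estimates on a fixed complex neighbourhood turns the established $O(1/N)$ bounds for the recurrence solutions into the same bounds for their $d$-derivatives, giving $|\partial_d v|,|\partial_d v'|$ uniformly bounded and hence $\sup|\varphi|<\infty$. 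Once this and the tail estimate are in hand, the limit passage in Dynkin's formula is routine.
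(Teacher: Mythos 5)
Your overall strategy --- Dynkin's formula up to $T_\Gamma\wedge t$, boundary vanishing of $\partial_d w$ on $\Gamma$, then dominated convergence --- is genuinely different from the paper's, and it hinges on one claim you never actually prove: the boundedness of $\varphi=\partial_d w$. The rest of your argument is sound (the rewriting of $L\varphi$ via \eqref{Ldv} as a sublinear expression in $w$, the Cauchy--Schwarz bound for $\mathbb{E}_{(k,m,n)}\int_0^{T_\Gamma}N_t\,dt$ using Proposition \ref{sup} together with the exponential moment of $\mathcal{T}_\Gamma$ transferred to continuous time via the uniform lower bound on the jump rates), but all of it rests on $\sup|\varphi|<\infty$, which you need both to start Dynkin's formula and to kill the boundary term. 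Your proposed route to that bound --- analytic continuation in $d$ plus Cauchy estimates turning the $O(1/N)$ bounds of \eqref{zN1surN} into $O(1/N)$ bounds for $\partial_d x_N,\dots,\partial_d y'_N$ --- is only a sketch of a substantial piece of work that is nowhere in the paper: \eqref{zN1surN} is proved for real parameters with $b$ small relative to $c$, Theorem \ref{vanalytique} establishes analyticity in $b$ (not in $d$), and for a Cauchy estimate you would need the $O(1/N)$ bound to hold uniformly in $N$ over a fixed \emph{complex} disc in $d$, which requires redoing the matrix estimates of Lemmas \ref{FG} and \ref{LN} with complex $d$. As written, this is a genuine gap at the central analytic input of your proof.

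The paper avoids this entirely, and it is worth seeing how, because it also explains why the hypotheses ``$b$ small enough and $\delta'>\delta$'' appear in the statement of the lemma --- in your argument they play no essential role in the limit passage, which is a warning sign. The paper applies Dynkin's formula with the spatial truncation $T_N=\inf\{t>0: N_t\geqslant N\}$ rather than the time truncation $t$, and then invokes Lemma \ref{signeLdv}: under those hypotheses $\bigl(L\,\partial_d w\bigr)(k,m,n)\geqslant0$ everywhere, so the integrals $\int_0^{T_\Gamma\wedge T_N}\bigl(L\,\partial_d w\bigr)(Z_s)\,ds$ form an increasing sequence of nonnegative variables and \emph{monotone} convergence closes the limit --- no boundedness of $\partial_d w$, no integrability of $L\,\partial_d w$ along the path, and no tail estimate on $T_\Gamma$ are needed. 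To salvage your dominated-convergence route you would have to carry out the complex-analytic bound on $\partial_d z_N$ in full; otherwise you should switch to the sign argument, which is exactly what the hypotheses of the lemma are there for.
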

\begin{proof}{(Lemma \ref{LemmaLdv})} We use Dynkin's formula,
stopped at time $T_N=\inf\{t>0,N_t\geqslant N\}$: \be \frac{\partial
w(Z_{T_{\Gamma}\land T_N},d)}{\partial d}= \frac{\partial w(Z_0,d)}{\partial
d}+M_{T_{\Gamma}\land T_N}+\left[\int_0^{T_{\Gamma}\land T_N}\left(L\frac{\partial
w(.,d)}{\partial d}\right)(Z_s)ds\right],\ee where $(M_{t\land
T_N})_{t>0}$ is a martingale. Since $(L\partial w/\partial
d(.,d))(k,m,n)\geqslant0$ for all $(k,m,n)$ (Lemma \ref{signeLdv}), then if $k+m+n=N_0$, $$\left(\int_0^{T_{\Gamma}\land
T_N}\left(L\frac{\partial w(.,d)}{\partial
d}\right)(Z_s)ds\right)_{N\geqslant N_0}$$ and $$\left(\partial w(Z_{T_{\Gamma}\land
T_N},d)/\partial d-\partial w(Z_0,d)/\partial d-M_{T_{\Gamma}\land
T_N}\right)_{N\geqslant N_0}$$ are two increasing sequences of positive
variables since $T_N\leqslant T_{N+1}$ when $N\geqslant N_0=k+m+n$.
From the monotone convergence theorem, since $T_{\Gamma}\land
T_N\underset{N\rightarrow\infty}{\longrightarrow} T_{\Gamma}$ p.s.
(Proposition \ref{sup}), \be
\mathbb{E}_{(k,m,n)}\left[\int_0^{T_{\Gamma}\land T_N}\left(L\frac{\partial
w(.,d)}{\partial
d}\right)(Z_s)ds\right]\underset{N\rightarrow\infty}{\longrightarrow}
\mathbb{E}_{(k,m,n)}\left[\int_0^{T_{\Gamma}}\left(L\frac{\partial
w(.,d)}{\partial d}\right)(Z_s)ds\right]\ee
and \ba
\mathbb{E}_{(k,m,n)}&\left(\frac{\partial w(Z_{T_{\Gamma}\land
T_N},d)}{\partial d}-\frac{\partial w(Z_0,d)}{\partial d}-M_{T_{\Gamma}\land
T_N}\right)\\&\phantom{\frac{\partial w(Z_{T_{\Gamma}\land
T_N},d)}{\partial d}-\frac{\partial w(Z_0,d)}{\partial d}}\underset{N\rightarrow\infty}{\longrightarrow}\mathbb{E}_{(k,m,n)}
\left[\frac{\partial w(Z_{T_{\Gamma}},d)}{\partial d}-\frac{\partial
w(Z_0)}{\partial d}-M_{T_{\Gamma}}\right].\ea
Using $\partial w(Z_{T_{\Gamma}},d)/\partial d=M_{T_{\Gamma}}=0$, we get the
result.\end{proof}

\medskip
Finally, $\eqref{Ldv}$, $\eqref{equationLdwN-1}$ and Lemma \ref{signeLdv} imply that $N\mapsto Nw((N-1,1,0),d)$ is an increasing function of $N$, and Lemmas \ref{signeLdv} and \ref{LemmaLdv} give
that $w((N-1,1,0),d)$ is a decreasing function of $d$.\end{proof}

\subsection{Numerical results}\label{sectionnumeriquemutationnel}
Equation $\eqref{zNfinal}$ allows us to
approximate the sequences $(z_N)_{N\geqslant2}$ numerically, and we do the same for
$(z'_N)_{N\geqslant2}$ and then for $\tau$ (Equation $\eqref{formuletauxvv'}$). Figure \ref{figureT} shows the
mean time $T$ to fixation of a deleterious mutation as a decreasing
function of $d$ (Theorem \ref{tauxcroissant}), for various values of
$b$, $\delta$, and $\delta'$.
\begin{figure}[ht]\center
\subfloat[][]{\label{figureTd}
\scalebox{0.45}{\includegraphics[trim=0cm 0cm 0cm
0cm,clip]{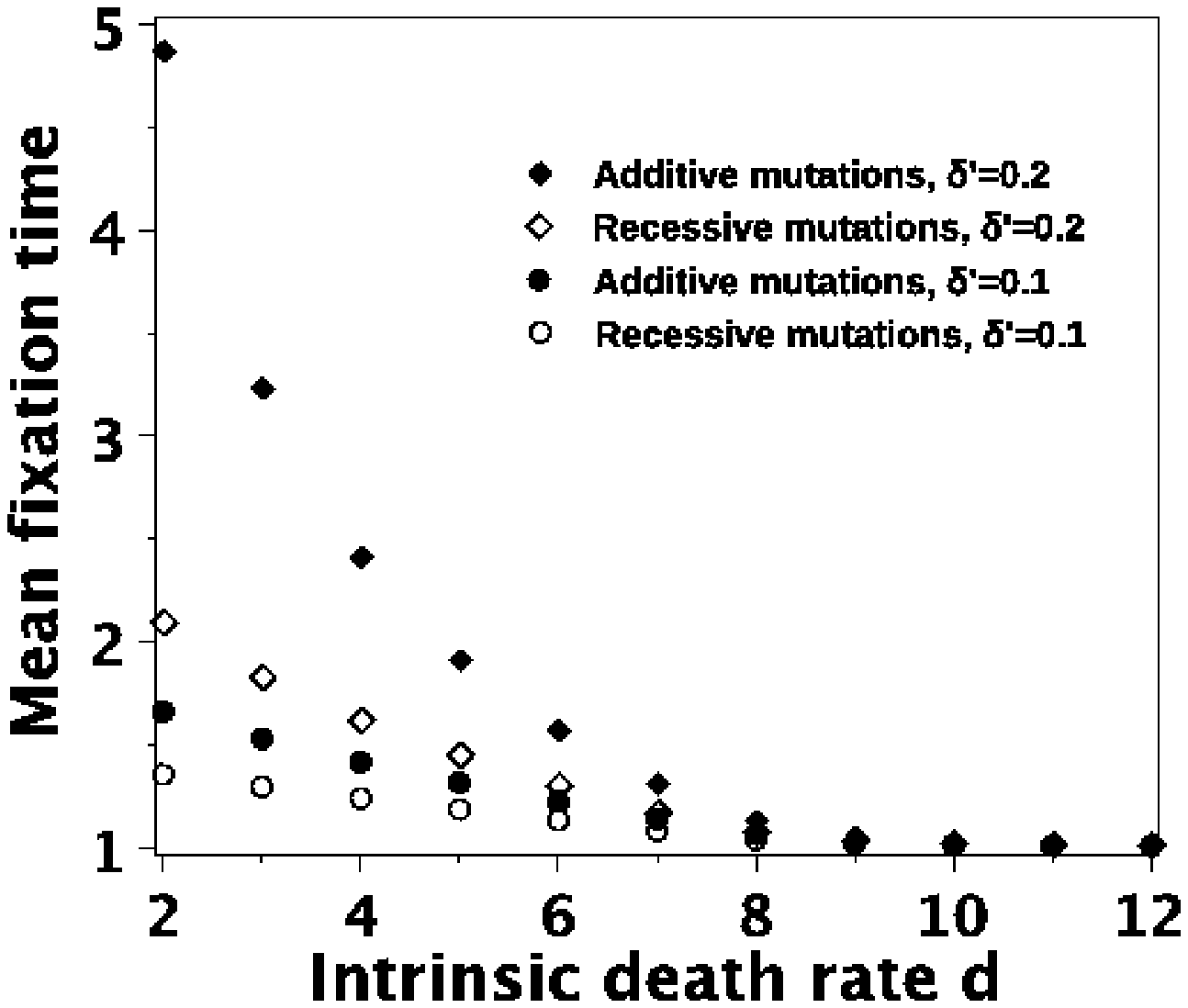}}}\quad \subfloat[][]{\label{figuredtbd}
\scalebox{0.455}{\includegraphics[trim=0cm -0.2cm 0cm
0cm,clip]{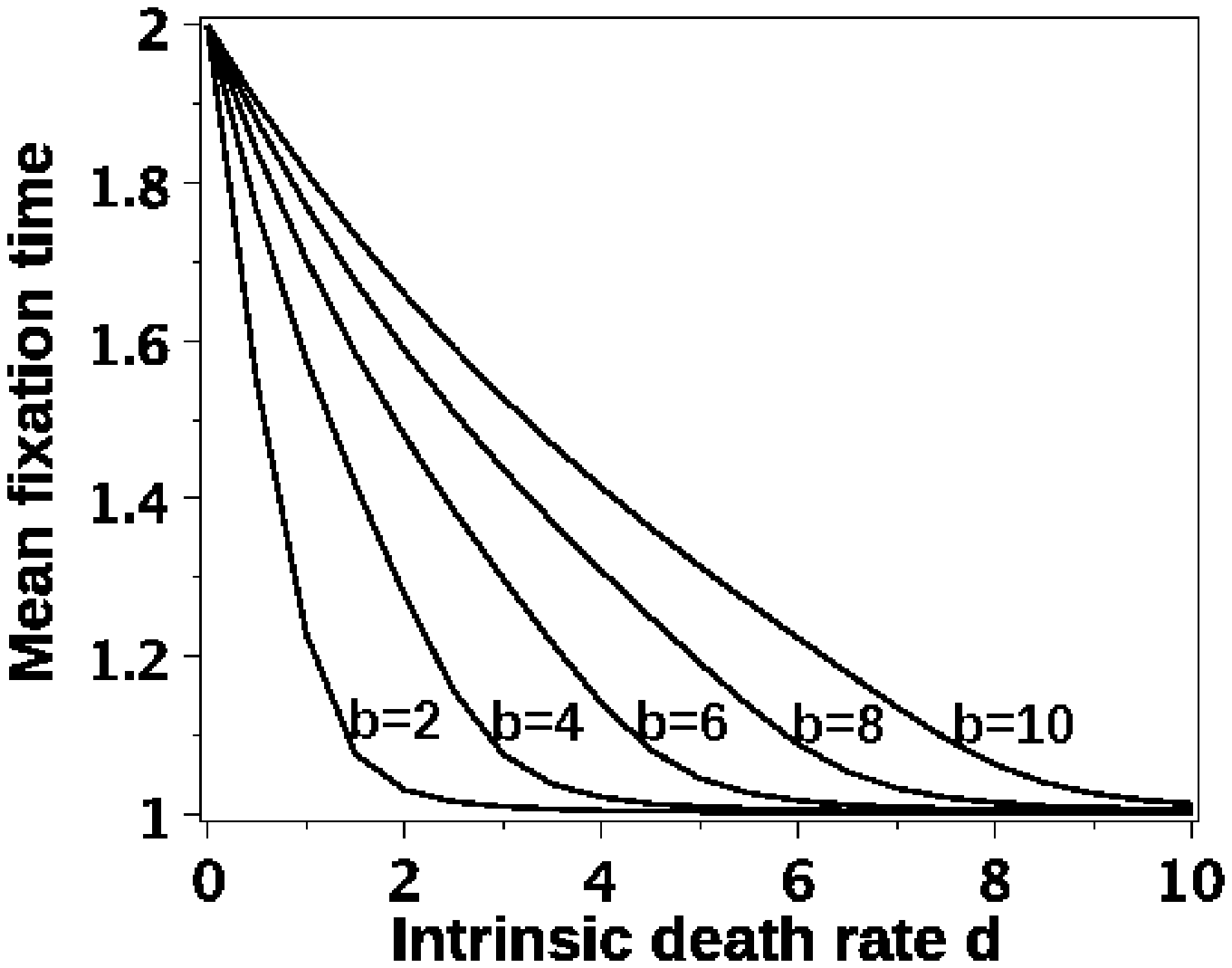}}} \caption[]{\subref{figureTd}:
Relationship between $T$, the mean time to fixation of a deleterious
mutation, and the population intrinsic death rate $d$ as a function
of selection and dominance. Open symbols: recessive mutation
($\delta=0$); closed symbols: additive mutation
($\delta=\delta'/2$); circles: $\delta'=0.1$; diamonds:
$\delta'=0.2$. Other demographic parameters are $b=10$, $c=0.1$, and
$m=1$.\subref{figuredtbd}: Relationship between the mean time to
fixation of a deleterious mutation $T$ and parameters $b$ and $d$.
Each curve corresponds to a fixed value of $b$. Other parameters are
$\delta=0.05$, $\delta'=0.1$, $c=0.1$ and $m=1$.}\label{figureT}
\end{figure}
For more biological analysis and numerical results, we refer to \citet{Coronbio2012}.

\appendix

\section{Proof of Theorem \ref{Zconverge}}\label{Appendicepreuve}

In this article we consider a diploid population and, as seen in Theorem \ref{maintheorem}, the diploidy generates interesting formulas for the fixation probability of a non neutral allele. More precisely, this fixation probability is a function of the initial genetic repartition in the population (parameters $k$, $m$, and $n$) and cannot be reduced to a function of the initial numbers of allele $A$ and $a$ in the population, as for a haploid population. At the mutational time scale (Section \ref{sectionmutationnelle}), this leads to
mutation fixation rates that are different than those obtained in \citet{ChampagnatLambert2007} for the haploid case.

However, the proof of Theorem \ref{Zconverge} can be seen as an extension of the proof of Theorem $3.1$ of
\citet{ChampagnatLambert2007}, to the cases where mutations occur during
life and not at birth, and where no death can occur when there
are two individuals in the population. We now explain why those
differences do not hamper the proof of Theorem $3.1$ of
\citet{ChampagnatLambert2007}, which is constituted of three
lemmas.

\medskip
\textbf{First lemma:} Lemma $6.2$ of \citet{ChampagnatLambert2007} proves that there are no mutation accumulations
when parameter $K$ goes to infinity. Using Proposition \ref{sup}, the lemma and its proof remain true in our model.

\medskip
\textbf{Second lemma:} The first part of Lemma $6.3$ of \citet{ChampagnatLambert2007} gives the limiting
law of $K\tau_1$ and of the population size at time $\tau_1$ when
$K$ goes to infinity, where $\tau_1$ is the first mutation
apparition time for the population $Z^K$. Here the proof is similar but uses different rates: as long as
$t<\tau_1$, if the population is initially monomorphic with genotype
$x$, the population size $(N_t^K)_{0<t<\tau_1}$ follows a birth and
death process with birth rate $b(x,i\delta_x)i$ and death rate
$d(x,i\delta_x)i$ when $N_t^K=i$, and  $\tau_1$ is the first point of an
inhomogeneous Poisson point process with intensity $(2\mu/K) N^K_t$.
Then for any bounded function $f:\mathbb{N}\setminus\{0,1\}\rightarrow\mathbb{R}$,
\ba\mathbb{E}(f(N^K_{\tau_1^-})\mathbf{1}_{\{t\geqslant\tau_1/K\}})
&=2\mu\int_0^t\mathbb{E}(f(N^K_{Ks})N^K_{Ks}
e^{-2\mu/K\int_0^{Ks}N_u^Kdu}ds)\\
&=2\mu\int_0^t\mathbb{E}(f(N^0_{Ks})N^0_{Ks}
e^{-2\mu/K\int_0^{Ks}N_u^0du}ds)\ea since the law of $N^K_t$ does not depend on $K$. The ergodic theorem finally
gives us that
\be\lim_{K\rightarrow\infty}\mathbb{E}^K(f(N^K_{\tau_1^-})
\mathbf{1}_{\{t\geqslant\tau_1/K\}})=
\frac{\mathbb{E}(Nf(N))}{\mathbb{E}(N)}\int_0^t2\mu\mathbb{E}(N)e^{-2\mu\mathbb{E}(N)s}ds\ee
where $N$ is a random variable with law $l$ defined by $\eqref{formulel}$.
The second part of Lemma $6.3$ of \citet{ChampagnatLambert2007} gives us that
$sup_{K>1}\mathbb{E}^K_{n\delta_x}(N_{\tau_1}^p)<\infty$. Here the proof needs to be slightly changed as the population
size does not reach $1$ in our model. We then define
$L_t=\int_0^t \mathbf{1}_{\{N^0_u=2\}}du$ and have
\be\mathbb{E}^K_{n\delta_x}(N_{\tau_1}^p)\leqslant2\mu\int_0^{\infty}\mathbb{E}(N^{p+1}_{Ks}exp(-\frac{2\mu}{K}L_{Ks})
ds).\ee We finally prove that there exist $\lambda$, $\lambda'$,
$C>0$ such that $\mathbb{P}(L_t\leqslant \lambda t) \leqslant
Ce^{-\lambda' t}$ as in \citet{ChampagnatLambert2007}, by defining
$s_i:=\inf\{s\geqslant t_{i-1}:N_s^0=2\}$ and $t_i=\inf\{t\geqslant
s_i:N^0_s=3\}$.

\medskip
\textbf{Third lemma:} The third lemma gives the behavior of $\rho_1$, the
first time where the population becomes monomorphic, and $V_1$, the
genotype of individuals at time $\rho_1$, if the population
initially contains $2$ genotypes $x$ and $y$. This lemma and the end
of the proof of Theorem \ref{Zconverge} are easily generalized to
our model. $\blacksquare$

\bigskip

\textbf{Acknowledgements:} I fully thank my Phd director Sylvie M\'el\'eard for her constructive comments and continual guidance during my work. This article benefited from the support of
the ANR MANEGE (ANR-09-BLAN-0215) and from the Chair ``Mod\'elisation
Math\'ematique et Biodiversit\'e" of Veolia Environnement - \'Ecole
Polytechnique - Museum National d'Histoire Naturelle - Fondation X.

\bibliographystyle{plainnat}
\bibliography{mabiblio}

\end{document}